\newtheorem{lemma}{Lemma}[section]
\newtheorem{theorem}[lemma]{Theorem}
\newtheorem{corollary}[lemma]{Corollary}
\newtheorem{proposition}[lemma]{Proposition}
\newtheorem{definition}[lemma]{Definition}
\newtheorem{remark}[lemma]{Remark}
\def\be{\begin{align}}
\def\ee{\end{align}}
\def\bea{\begin{align}}
\def\eea{\end{align}}
\def\bpm{\begin{pmatrix}}
\def\epm{\end{pmatrix}}
\def\nn{ }
\def\lb{\label}
\def\bs{\setminus}
\def\pt{\partial}
\def\R{{\bf R}}
\def\A{{\bf A}}
\def\aa{{\alpha}}
\def\bb{{\beta}}
\def\ga{{\gamma}}
\def\ep{{\epsilon}}
\def\lm{{\lambda}}
\def\Lm{{\Lambda}}
\def\dl{{\delta}}
\def\Dl{{\Delta}}
\def\sg{{\sigma}}
\def\Sg{{\Sigma}}
\def\<{{\langle}}
\def\>{{\rangle}}
\def\d{{\mathrm{d}}}
\def\pt{\partial}
\def\cR{{\cal R}}
\def\cC{{\cal C}}
\def\P{{\cal P}}
\def\Nn{{\cal N}}
\def\O{{\cal O}}
\def\diag{{\rm diag}}
\def\r{\right}
\def\l{\left}
\def\ii{\sqrt{-1}}
\def\td{\tilde}
\def\sgn{\mathrm{sgn}}
\def\RRe{{\rm Re}}
\def\IIm{{\rm Im}}
\def\td#1{\tilde{#1}}
\title{Mathematical Mechanism on Dynamical System Algorithms of the Ising Model}
\author[a]{Bowen LIU \thanks{Email: bowen.liu@sjtu.edu.cn. Partially supported by NSFC (No. 11671215) and Innovation Program of Shanghai Municipal Education Commission.}}
\author[a]{Kaizhi WANG \thanks{Email: kzwang@sjtu.edu.cn. Partially supported by Natural Scientific Foundation of China (Grant No. 11771283 and No.
		11931016) and Innovation Program of Shanghai Municipal Education Commission.}}
\author[a]{Dongmei XIAO \thanks{Corresponding author. Email: xiaodm@sjtu.edu.cn. Partially supported by Innovation Program of Shanghai Municipal Education Commission.}}
\author[b]{Zhan YU \thanks{Email: zhanyu2-c@my.cityu.edu.hk}}
\affil[a]{School of Mathematical Sciences, Shanghai Jiao Tong University, Shanghai 200240, China}
\affil[b]{Department of Mathematics, City University of Hong Kong, Kowloon, Hong Kong}
\date{}
\begin{document}

\maketitle

\begin{abstract}
	Various combinatorial optimization NP-hard problems can be reduced to finding the minimizer of an Ising model, which is a discrete mathematical model. It is an intellectual challenge to develop some mathematical tools or algorithms for solving the Ising model. Over the past decades, some continuous approaches or algorithms have been proposed from physical, mathematical or computational  views for optimizing the Ising model such as quantum annealing, the coherent Ising machine, simulated annealing, adiabatic Hamiltonian systems, etc..  However, the mathematical principle of these algorithms is far from being understood. In this paper, we reveal the mathematical mechanism of dynamical system algorithms for the Ising model by Morse theory and variational methods. We prove that the dynamical system algorithms can be designed to minimize a continuous function whose local minimum points give all the candidates of the Ising model and the global minimum gives the minimizer of Ising problem. Using this mathematical mechanism, we can easily understand several dynamical system algorithms of the Ising model such as the coherent Ising machine, the Kerr-nonlinear parametric oscillators and the simulated bifurcation algorithm.
Furthermore, motivated by the works of C. Conley, we study transit and capture properties of the simulated bifurcation algorithm to explain its convergence by the low energy transit and capture in celestial mechanics. A detailed discussion on $2$-spin and $3$-spin Ising models is presented as application.
\end{abstract}

{\bf Keywords}: Ising model, dynamical system algorithm, mathematical mechanism, Morse theory, celestial mechanics, variational method

{\bf AMS Subject Classification}: 90C27, 68W40, 58E05, 70F15

\renewcommand{\theequation}{\thesection.\arabic{equation}}
\renewcommand{\thefigure}{\thesection.\arabic{figure}}

\setcounter{equation}{0}
\setcounter{figure}{0}
\section{Introduction and main results}
\label{sec:1}

The Ising model or Lenz-Ising model has been widely studied in combinatorial optimization and statistical physics. This model was first proposed by W. Lenz, and in 1925 its one-dimensional case was solved by E. Ising in his thesis \cite{ising1925beitrag}. From the statistical physical point of view, the Ising model is regarded as a translation-invariant, ferromagnetic spin system. To study this spin system, many elegant and profound theories in probability and statistical physics have been developed in recent years. Reader may refer to \cite{Aizenman2014, Aizenman2019, DuminilCopin2019, DuminilCopin2018} and references therein for more details. In the aspect of combinatorial optimization, many NP-hard problems, for example, max-cut problem, can be equivalently formulated as Ising models (cf. \cite{barahona1988application,inagaki2016coherent,McMahon2016}).

As a combinatorial optimization problem, we consider the Ising model without an external field as follows.
\begin{align}
	\min_v \;\;  E(v) := - \frac{1}{2} v^T S v,  \lb{eqn:ising}
\end{align}
where $v=(v_1,\cdots,v_n) \in \{1,-1\}^n$ is the candidate and $S=(s_{ij})_{n\times n}$ is the symmetric coupling coefficient matrix with $s_{ii} = 0$ for all $i$. Each $v_i$ denotes the $i$th Ising spin, $v$ is the vector representation of a spin configuration, and $v^T$ denotes the transpose of $v$. We call $E(v)$ {\it the Ising energy} and denote {\it the candidate set} of the Ising model by $C(E) = \{-1,1\}^n$.

According to \cite{barahona1982computational}, minimizing the Ising energy $E(v)$ in \eqref{eqn:ising} belongs to the class of the non-deterministic polynomial-time (NP)-hard problem. It is an important topic in combinatorial optimization (cf. \cite{barahona1988application, Dobrushin1973,MR0438971,Galashin2020,MR3627847,Jerrum1993,Nishimori2001, Ott2019,MR3630933, Stein1992,  Yoshimura2019}).
Over the past decades, physicists and computer scientists tried to find a proper model or algorithms to solve the Ising problem \eqref{eqn:ising}.
The quantum annealing was used to study the ground state of the Ising problem (cf. \cite{Johnson2011}, \cite{Kim2010} and \cite{Santoro2002}). The electromechanical system can be also applied to minimize the Ising problem in \cite{Mahboob2016}, etc.. In this paper, we focus on the two dynamical system algorithms: the coherent Ising machines (CIM) and the adiabatic Hamiltonian systems. We refer readers to \cite{Sahai2020Dynamical} for other continuous models and algorithms on combinatorial optimization.

In 2011, Utsunomiya et al. proposed one Ising machine based on optical coherent feedback in \cite{utsunomiya2011mapping}. Since 2013, the coherent Ising machine was proposed to solve the Ising problem by the similarity
between the Ising problem and the Hamiltonian of bistable interfering coherent optical states (cf. \cite{bohm2019poor, haribara2016computational, marandi2014network, takata201616, wang2013coherent, yamamoto2017coherent}).
Especially, in 2016, Inagaki et al. in \cite{inagaki2016coherent} applied CIM to study 2000-node of Ising problems, which were outperformed simulated annealing in \cite{Kirkpatrick1983}.
On the other hand, based on the adiabatic Hamiltonian systems and quantum adiabatic optimization,
the Kerr-nonlinear
parametric oscillators (KPO) in \cite{goto2019quantum} and the simulated bifurcation (SB) algorithm in \cite{goto2019combinatorial} were introduced to minimize of Ising model by classical computers in 2019.

According to some experiments (cf. Figure 2 of \cite{goto2019combinatorial}), it is shown that the CIM and the adiabatic Hamiltonian systems perform better than the simulated annealing. However, a natural question arises if
the global minimum points given by above dynamical system algorithms correspond to the minimizers of Ising problem. In this paper, we will answer this question and prove that  minimizing Ising model \eqref{eqn:ising} can be realized by minimizing the following smooth function.

Define a function $U: \R^n \to \R$ by
\begin{align}
	U(x) = \sum_{i=1}^{n}\frac{1}{4} x_{i}^{4}+\frac{\bb - \aa^2}{2}x^Tx  - \frac{1}{2} x^T S x, \lb{eqn:pote}
\end{align}
where $x=(x_1, x_2, \dots, x_n) \in \R^n$, $\aa$ is a positive parameter, $\bb$ is a given positive constant, and $S$ is the given matrix in \eqref{eqn:ising}.
Via Morse theory, we prove
that minimizing Ising model \eqref{eqn:ising} can be realized by minimizing the smooth function $U(x)$ in \eqref{eqn:pote}. Hence, there exists a correspondence between the global minimum points of  the Ising model and those of the smooth function $U(x)$.
This correspondence can be applied to understand the mechanism of CIM models in \cite{wang2013coherent,  yamamoto2017coherent}, the KPO in \cite{goto2019quantum} and SB algorithm \cite{goto2019combinatorial} mathematically.

Let the signum vector of $x$ be
\begin{align}
	\sgn(x) := (\mathrm{sgn}(x_1),\cdots,\mathrm{sgn}(x_n)) \in \{-1,0,1\}^n:=\overbrace{\{-1,0,1\} \times \{-1,0,1\} \times \cdots \times \{-1,0,1\}}^{n}.
\end{align}
We first prove that there exists a constant $\aa_0> 0$ such that the signum vectors of the local minimum points of $U(x)$ are $C(E)=\{1,-1\}^n$ for any $\aa > \aa_0$ (cf. Proposition \ref{prop:3n.cp} below). Then we obtain the main result as follows.

\begin{theorem}\lb{thm:main}
	For any given $\bb$ and $S$, there exists $\aa_* \geq  \aa_0$ such that for $\aa > \aa_*$, if $x_0$ is a global minimum point of $U(x)$, then the signum vector of $x_0$ is a minimizer of the Ising problem.
\end{theorem}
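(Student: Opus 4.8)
The plan is to compare the two optimization problems directly by evaluating $U$ at suitably scaled candidate vectors. First I would establish the natural "decoupling" of $U$: writing $U(x) = \sum_i (\frac14 x_i^4 + \frac{\bb-\aa^2}{2} x_i^2) - \frac12 x^T S x$, the first sum is a separable quartic whose one-variable pieces $g(t) = \frac14 t^4 + \frac{\bb-\aa^2}{2} t^2$ have, for $\aa$ large (so that $\aa^2 > \bb$), global minima at $t = \pm\sqrt{\aa^2-\bb} =: \pm r$ with $g(\pm r) = -\frac14(\aa^2-\bb)^2$. So for $v \in C(E)$, the scaled point $rv$ satisfies $U(rv) = -\frac{n}{4}(\aa^2-\bb)^2 - \frac{r^2}{2} v^T S v = -\frac{n}{4}(\aa^2-\bb)^2 + r^2 E(v)$. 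Minimizing over $v \in C(E)$, the best such point has value $-\frac{n}{4}(\aa^2-\bb)^2 + r^2 \min_v E(v)$. This shows the global minimum of $U$ is at most this quantity, so $U(x_0) \le -\frac{n}{4}(\aa^2-\bb)^2 + r^2 \min_v E(v)$.

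Next, by Proposition 1.4, for $\aa > \aa_0$ the global minimum point $x_0$ (being in particular a local minimum point) has signum vector $\sgn(x_0) = v_0 \in C(E)$; let me write $x_0 = (x_{0,1}, \dots, x_{0,n})$ with each $x_{0,i}$ nonzero and of sign $v_{0,i}$. I would then bound $U(x_0)$ from below in terms of $E(v_0)$. For the diagonal part, $\sum_i g(x_{0,i}) \ge -\frac{n}{4}(\aa^2-\bb)^2$ always, but I need something sharper that quantifies how far $x_0$ sits from the "ideal" point $r v_0$. The key estimate is: if $U(x_0)$ is within $O(r^2)$ of the absolute floor $-\frac{n}{4}(\aa^2-\bb)^2$, then each $|x_{0,i}|$ must be within $O(1)$ (uniformly in $\aa$, after the quartic's curvature near $\pm r$ is taken into account — note $g''(\pm r) = 2(\aa^2-\bb)$ grows, which actually helps) of $r$. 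Concretely, $g(t) - g(r) \ge (\aa^2-\bb)(|t|-r)^2$ or a similar convexity-type bound valid near the wells, so controlling $\sum_i (g(x_{0,i}) - g(r))$ controls $\sum_i (|x_{0,i}| - r)^2$. Combined with the cross term $-\frac12 x_0^T S x_0 = -\frac{r^2}{2} v_0^T S v_0 + (\text{error depending on } |x_{0,i}| - r)$, this yields $U(x_0) \ge -\frac{n}{4}(\aa^2-\bb)^2 + r^2 E(v_0) - \eta(\aa)$ with $\eta(\aa) = o(r^2)$, i.e., $\eta(\aa)/r^2 \to 0$ as $\aa \to \infty$.

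Putting the two inequalities together, $r^2 E(v_0) - \eta(\aa) \le U(x_0) + \frac{n}{4}(\aa^2-\bb)^2 \le r^2 \min_v E(v)$, hence $E(v_0) \le \min_v E(v) + \eta(\aa)/r^2$. Since $E$ takes finitely many values on the finite set $C(E)$, there is a positive gap $\delta = \min\{E(w) - \min_v E(v) : w \in C(E),\ E(w) > \min_v E(v)\}$ (if $E$ is constant on $C(E)$ the statement is trivial). Choosing $\aa_*$ large enough — at least $\aa_0$, and large enough that $\eta(\aa)/r^2 < \delta$ for all $\aa > \aa_*$ — forces $E(v_0) = \min_v E(v)$, i.e., $v_0 = \sgn(x_0)$ is a minimizer of the Ising problem. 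The main obstacle I anticipate is the lower bound in the second paragraph: one must rule out the possibility that $x_0$ trades a slightly worse (less negative) diagonal contribution for a much better cross term $-\frac12 x_0^T S x_0$, which requires the coercivity/convexity estimate on $g$ near its wells to dominate the at-most-linear-in-perturbation gain from the bilinear form, uniformly as $\aa \to \infty$; this is exactly where the growth of $g''(\pm r) \sim 2\aa^2$ versus the fixed size of $\|S\|$ is used, and making this quantitative (to exhibit an explicit $\eta(\aa) = o(r^2)$) is the technical heart of the argument.
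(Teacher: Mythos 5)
Your plan is correct, and it takes a genuinely different route from the paper's proof. The paper proves Theorem \ref{thm:main} by first building a full correspondence between the critical points of $U$ and those of the decoupled function $\bar U(x)=\sum_i\frac14 x_i^4-\frac{\aa^2}{2}x^Tx$ (Proposition \ref{thm:ep.net}, Corollary \ref{coro:minimum}, Proposition \ref{prop:3n.cp}), then showing $|U(x_i)-U(\bar x_i)|<M$ uniformly in $\aa$ (Lemma \ref{lem:MI}) while the reference values satisfy $U(\bar x_{2i})-U(\bar x_{2i-2})=\aa^2 d_i$, so that for $\aa>\max\{A_{\max},\sqrt{3M/d_{\min}}\}$ the $O(1)$ perturbation cannot reorder gaps of size $\aa^2 d_{\min}$, and a contradiction argument finishes. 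You instead compare $U(x_0)$ directly with the scaled candidates $rv$, $r=\sqrt{\aa^2-\bb}$, which gives the upper bound for free, and you lower-bound $U(x_0)$ by coercivity of the separable part; the only piece of the paper's machinery you still need is that $\sgn(x_0)\in\{-1,1\}^n$, which is exactly Proposition \ref{prop:3n.cp}/Corollary \ref{coro:minimum} and is available since $\aa_*\geq\aa_0$. Moreover, the step you flag as the technical heart is in fact immediate: with $g(t)=\frac14t^4-\frac{r^2}{2}t^2$ one has the exact identity $g(t)-g(r)=\frac14(t^2-r^2)^2\geq\frac{r^2}{4}(|t|-r)^2$ (your stated constant $(\aa^2-\bb)$ is slightly too generous --- it fails at $t=0$ --- but only the order in $r$ matters), and writing $x_{0,i}=v_{0,i}(r+\ep_i)$ and completing the square in $|\ep|$ against the cross-term error $O(r\|S\|\,|\ep|+\|S\|\,|\ep|^2)$ gives $U(x_0)\geq-\frac{n r^4}{4}+r^2E(v_0)-2n\|S\|^2$ once $r^2\geq 4\|S\|$; so your $\eta(\aa)$ is actually $O(1)$, not merely $o(r^2)$, exactly parallel to the paper's constant $M$. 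The trade-off: your argument is more elementary and yields an explicit, checkable $\aa_*$ in terms of $n$, $\|S\|$, $\bb$ and the Ising gap, bypassing the perturbative critical-point analysis; the paper's route is heavier here but reuses structure (the $3^n$ critical points, Morse indices, the ordering of minima) that it needs anyway for its later corollary and for the transit/capture results.
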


Using Theorem \ref{thm:main}, we study  the Ising problem in $\R^2$ and an Ising problem in $\R^3$.
In $\R^2$, any Ising model can be reduced to $E = -\frac{1}{2}v^T Sv$,
where $v\in \{-1,1\}^2$ and $S = S_2 = (\begin{smallmatrix}
	0 & 1 \\1 & 0
\end{smallmatrix})$.
The number of critical points of $U(x)$ depends on $\aa$. To show all bifurcations of critical points as $\aa$ increases, we assume that $\bb > 1$ is fixed. In this case, $\aa_*= \aa_0 = \sqrt{\bb+2}$. When $\aa > \sqrt{\bb+2}$ in Theorem \ref{thm:main}, $U(x)$ possesses $4$ local minimum points. The signum vectors of the global minimum points $(\lm_1, \lm_1)$ and $(-\lm_1, -\lm_1)$ are $(1,1)$ and $(-1,-1)$ respectively. Both $(1,1)$ and $(-1,-1)$ are the minimizers of this Ising problem. The change of local maximum, saddle and local minimum are given in Table \ref{tab:1} and shown in Figure \ref{fig:contourplot}, where those $\lm_i> 0$ will be given in \eqref{eqn:r2.lm1}-\eqref{eqn:r2.lm4}. In $\R^3$, we arbitrarily give a matrix $S_3$, for example $S_3 = \l(\begin{smallmatrix}
	0 & 1 & -2\\
	1 & 0 & 3 \\
	-2 & 3 & 0
\end{smallmatrix}\r)$. We apply Theorem \ref{thm:main} to study Ising model in \eqref{eqn:ising} with $S=S_3$ in Section \ref{sec:opt.app}. The minimizers of the Ising energy $E$ are $(-1,1,1)$ or $(1, -1,-1)$. Numerical computations show that when $\aa > 4.6$, the sigum vectors of global minimum points of $U(x)$ are $(-1,1,1)$ or $(1, -1,-1)$.  More details on bifurcations of $U(x)$ with $S = S_2$ and $S = S_3$, respectively,  will be discussed in Section \ref{sec:opt.app}.

\begin{table}[htbp]
	\centering
	\caption{{\footnotesize The critical points of $U(x)$ when $n =2$ are given here. If $\aa > \aa_*\equiv \sqrt{\bb+2}$, both $(\lm_1,\lm_1)$ and $(-\lm_1,-\lm_1)$ are the global minimum points and their signum vectors $(-1,-1)$ and $(1,1)$ minimize the Ising energy $E=-\frac{1}{2}v^T S_2 v$.}}
	\begin{tabular}{llll}	
		\toprule
		$\aa$ &  Min & Saddle & Max\\
		\midrule
		\rowcolor[gray]{.9}
		$\aa^2 < \bb-1$ 		     & $(0,0)$                             & NA & NA\\
		$\aa^2 \in (\bb-1, \bb+1)$   & $(\lm_1,\lm_1)$, $(-\lm_1,-\lm_1)$  & $(0,0)$ & NA\\
		\rowcolor[gray]{.9}
		$\aa^2\in (\bb +1, \bb + 2)$ & $(\lm_1,\lm_1)$, $(-\lm_1,-\lm_1)$  & $(\lm_2, -\lm_2)$, $(-\lm_2,\lm_2)$ & $(0,0)$ \\
		\multirow{2}{*}{$\aa^2 > \bb +2$} & $(\lm_1,\lm_1)$, $(-\lm_1,-\lm_1)$   & $(\lm_3, -\lm_4)$, $(-\lm_3, \lm_4)$ & \multirow{2}{*}{$(0,0)$}\\
		& $(\lm_2, -\lm_2)$, $(-\lm_2,\lm_2)$ & $(\lm_4, -\lm_3)$, $(-\lm_4, \lm_3)$ &\\
		\bottomrule\\	
	\end{tabular}\lb{tab:1}
\end{table}

\begin{figure}
	\centering
	\begin{tabular}{cccc}
		\includegraphics[width=0.21\textwidth]{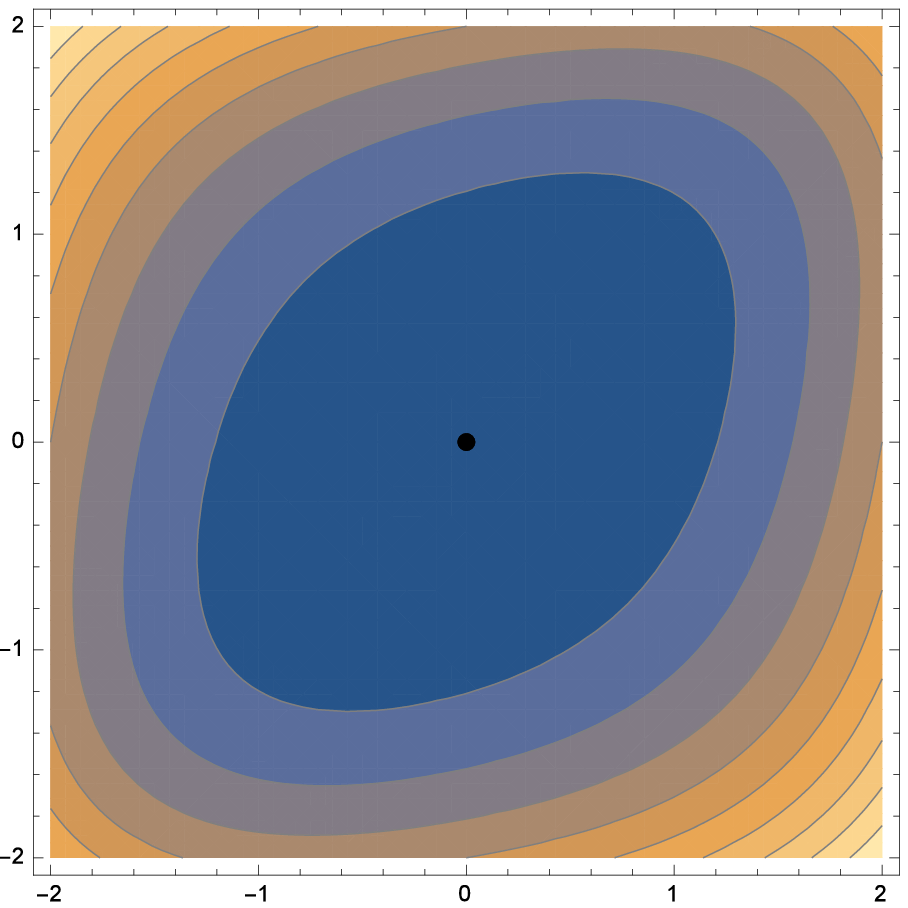} &
		\includegraphics[width=0.21\textwidth]{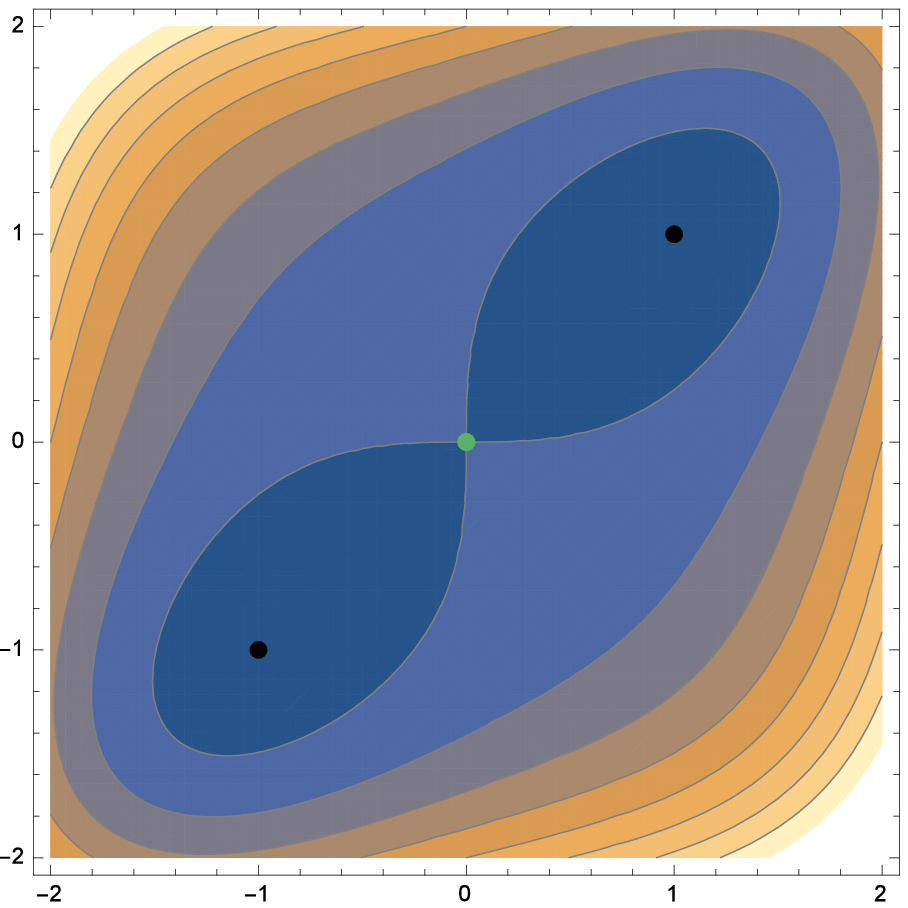}  & \includegraphics[width=0.21\textwidth]{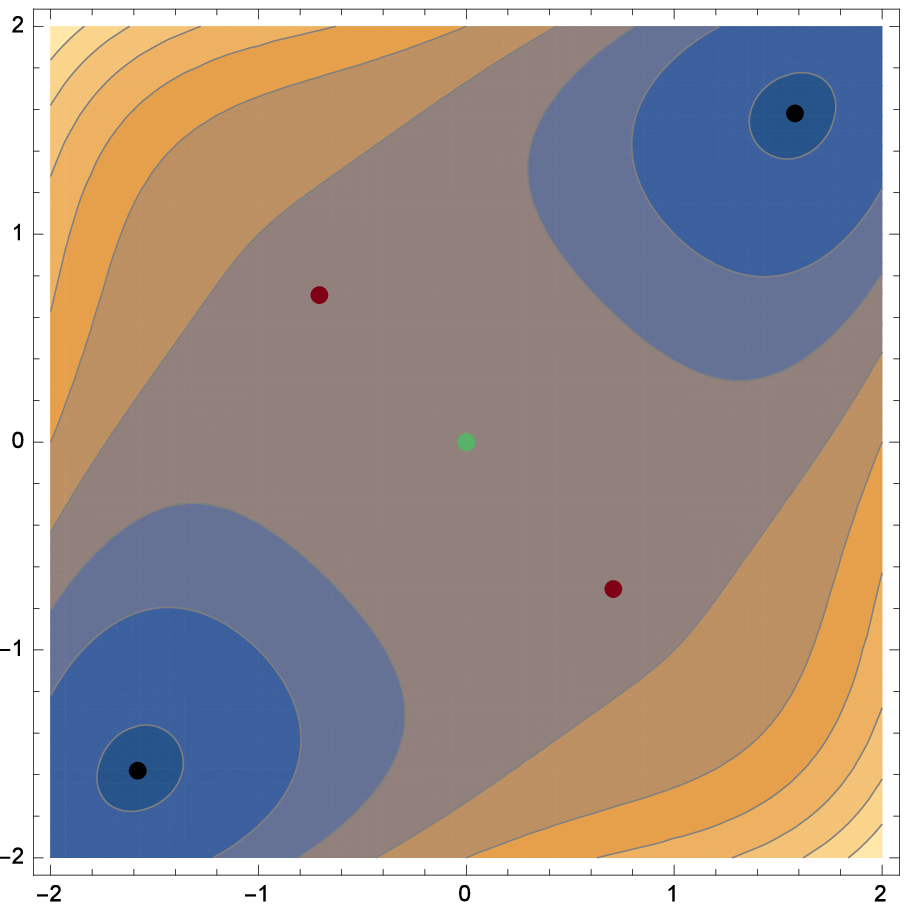} &
		\includegraphics[width=0.21\textwidth]{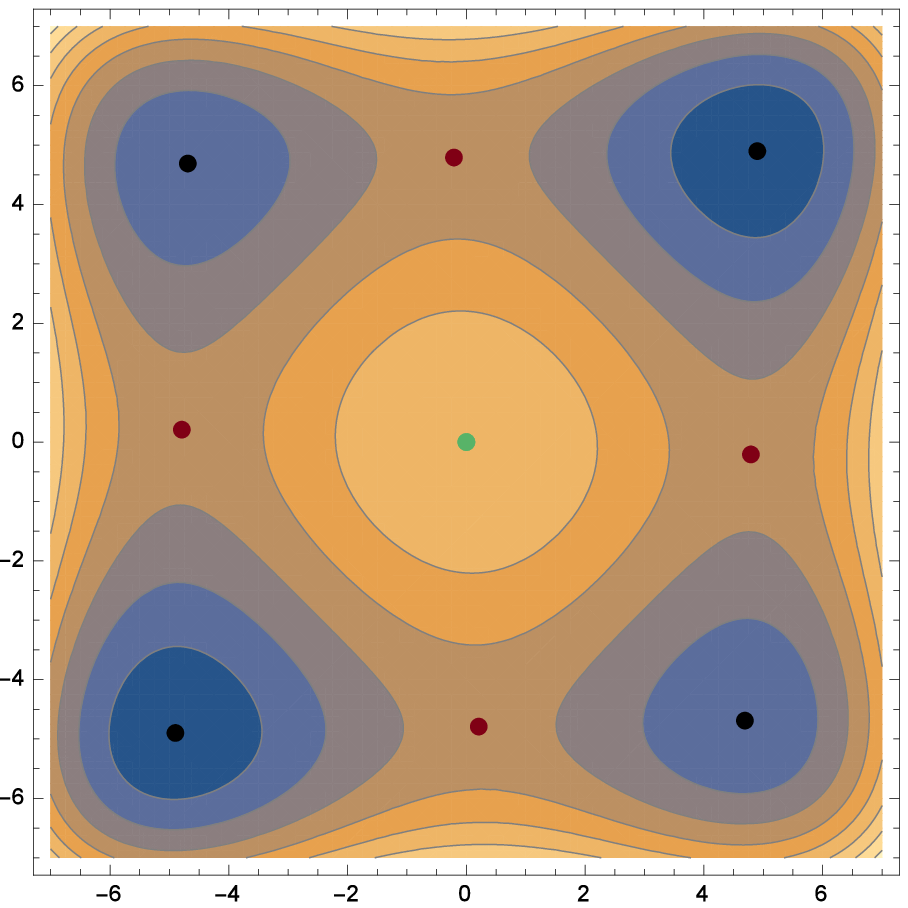}\\
		(a)  & (b) & (c) & (d)   \\[4pt]
	\end{tabular}
	\caption{\footnotesize{In $\R^2$, the contour plots of $U(x) = \frac{1}{4} (x_1^4 + x_2^4) + \frac{2-\aa^2}{2} (x_1^2 + x_2^2) - x_1x_2$ depend on $\aa$ where $\bb = 2$. When $\aa \geq \aa_* = 2$, Theorem \ref{thm:main} holds. The black dots are local minimum points; the red dots are saddles; and the green dots are local maximum points. The case of $\aa = 0$ is shown in  (a) where $(0,0)$ is the only local minimum point. The case of $\aa = \sqrt{2}$ is shown in (b) where there are only one saddle $(0,0)$ and two local minimum points $(\lm_1,\lm_1)$ and $(-\lm_1,-\lm_1)$. The case of $\aa =\sqrt{7/2}$ is shown in (c) where there are only two saddles $(\lm_2, -\lm_2)$ and $(-\lm_2,\lm_2)$, two local minimum points $(\lm_1,\lm_1)$ and $(-\lm_1,-\lm_1)$, and a unique local maximum point $(0,0)$. The case of $\aa = 5$ is shown in (d), where there are four saddles: $(\lm_3, -\lm_4)$, $(-\lm_3, \lm_4)$, $(\lm_4, -\lm_3)$ and $(-\lm_4, \lm_3)$, four local minimum points $(\lm_1,\lm_1)$, $(-\lm_1,-\lm_1)$, $(\lm_2, -\lm_2)$ and $(-\lm_2,\lm_2)$ and a unique local maximum point $(0,0)$. When $\aa^2 > 1$,  $(\lm_1,\lm_1)$ and $(-\lm_1,-\lm_1)$ are the global minimum points shown in (b), (c) and (d). }}
	\label{fig:contourplot}
\end{figure}

Therefore, Theorem \ref{thm:main} can be applied to reveal the mathematical mechanism of the CIM and adiabatic Hamiltonian systems. We prove that the global minimum points, which are found by the CIM in \cite{wang2013coherent,  yamamoto2017coherent}, the KPO in \cite{goto2019quantum} and the SB-algorithm in \cite{goto2019combinatorial}, are the minimizer of Ising model. These results are given in Proposition \ref{prop:cim}-\ref{prop:cim2} and Proposition \ref{prop:mini.KPO.SB}-\ref{prop:mini.KPO.SB2}. Mathematically, CIM is formulated by gradient descent flows; while the KPO and SB algorithm are formulated by adiabatic Hamiltonian systems. Especially, the SB algorithm is based on a mechanical Hamiltonian system whose Hamiltonian function is the sum of kinetic energy and the potential. Even though these algorithms are based on different physical models and different dynamical systems, we can reveal their mechanism by Theorem \ref{thm:main}.

We further study the transition and convergence of SB algorithm by some tools in celestial mechanics.
In the study of restricted three body problem (e.g., the earth-moon-satellite system), the transit is used to describe orbit of zero-mass body moving from one primary to another primary through the saddle Lagrangian point between two primaries. Inspired by the low energy transfer of C. Conley in \cite{MR233535} and ballistic capture of E. Belbruno in \cite{conley1969ultimate} and \cite{MR2029316}, we employ the concepts of transit and capture from   {\it celestial mechanics} to study the transition and convergence of SB algorithm in \eqref{eqn:SB.11}. We use the concepts to mimic the motion of orbit $x(t)$ from the neighborhood of one local minimum point to the neighborhood of another local minimum point via the saddle between them. The capture in celestial mechanics is used to describe that the motion of a satellite will be in some neighborhood of one primary forever. Namely it is captured by this primary. Thus,  the concept of capture is used to describe that the orbit $x(t)$ is in some neighborhood of the local minimum point forever.

Via re-scaling, we rewrite the Hamiltonian system in SB algorithm as
\begin{equation}\lb{eqn:SB.11}
	\begin{dcases}
		\dot{x}_{i}=y_{i}, \\
		\dot{y}_{i}=-\left(x_{i}^{2} + \bb -\aa^2(t)\right) x_{i}+ \sum_{j=1}^{n} s_{ij} x_{j},\\
		i = 1, \dots, n.
	\end{dcases}
\end{equation}
The corresponding Hamiltonian $H (x,y,t)$ is given by
\begin{align}
	H (x,y,t) =\sum_{i=1}^{n} \frac{1}{2}y_{i}^{2} + U(x,t) =\sum_{i=1}^{n} \frac{1}{2}\dot{x}_{i}^{2} + \sum_{i=1}^{n} \l( \frac{1}{4}x_{i}^{4}+\frac{\bb-\aa^2(t)}{2}x_{i}^{2}\r) -\frac{1}{2} x^T Sx, \lb{eqn:SB.Hami}
\end{align}
where
\begin{align}
	U(x,t) =  \sum_{i=1}^{n} \l( \frac{1}{4}x_{i}^{4}+\frac{\bb-\aa^2(t)}{2}x_{i}^{2}\r) -\frac{1}{2} x^T Sx,
\end{align}
$\aa(t)\in C^{1}([0,\infty),\R)$ and $\bb>0$ is a given constant.

First we consider the case $\dot \aa(t) =0$. Namely $\aa(t)\equiv \aa$ and the Hamiltonian system \eqref{eqn:SB.11} is autonomous.
The  component $x(t)$ of the solution $(x(t),y(t))$ is called an {\it orbit} which is an analogue of the orbit of a star or a satellite in celestial mechanics.  For the given Hamiltonian energy $H(x,y) = c$, we define the Hill's region
\begin{align}
	\cR_c = \{x|U(x) < c\}, \nn
\end{align}
which is one classical concept of sub-level set of the potential $U(x)$ in celestial mechanics (cf. Section 5.5 of \cite{Frauenfelder2018Book}).
Since $\frac{\d H(x,y)}{\d t} = 0$ along the solution $(x(t),y(t))$ of \eqref{eqn:SB.11}, the Hamiltonian energy of solution is preserved.

In the Hill's region $\cR_c$, an orbit $x(t)$ is {\it transit} on $I\subset \R$ if there exist $t_1$ and $t_2 \in I$ such that $x(t_1)$ is in some neighborhood of a local minimum $x_1$ while $x(t_2)$ is in some neighborhood of another local minimum $x_2$. It is {\it capture} if there exists $t_3$ such that $x(t)$ can not be in a neighborhood of the others for $t > t_3$. The precise definition will be given in Definition \ref{def:cap.tst} below.

Let $\aa$ satisfy that $\aa > \aa^*$ where $\aa^*$ is as in Theorem \ref{thm:main}.
We define $U_s = \min_{x\in \cC_s(U)} U(x)$ where $\cC_s(U)$ is the set of the saddles of $U(x)$. Exploring the topology of the Hill's region and applying mountain pass theorem in variational method, we have the following theorem.
\begin{figure}
	\centering
	\begin{tabular}{cccc}
		\includegraphics[width=0.21\textwidth]{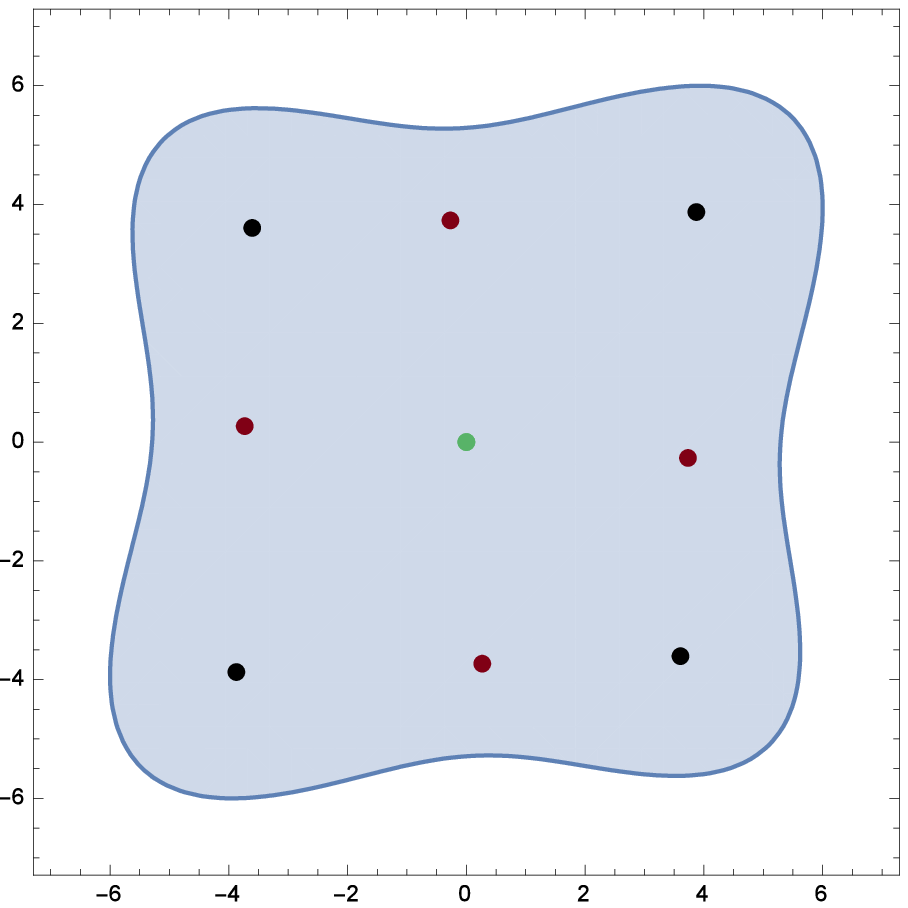} &
		\includegraphics[width=0.21\textwidth]{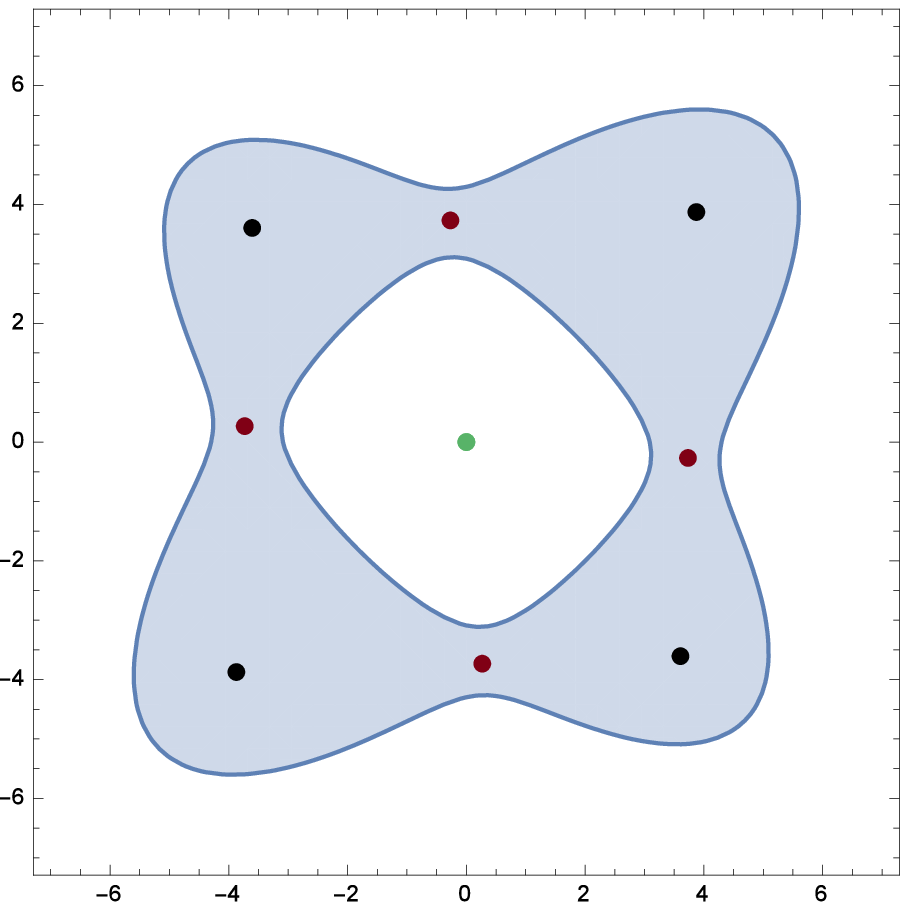} &
		\includegraphics[width=0.21\textwidth]{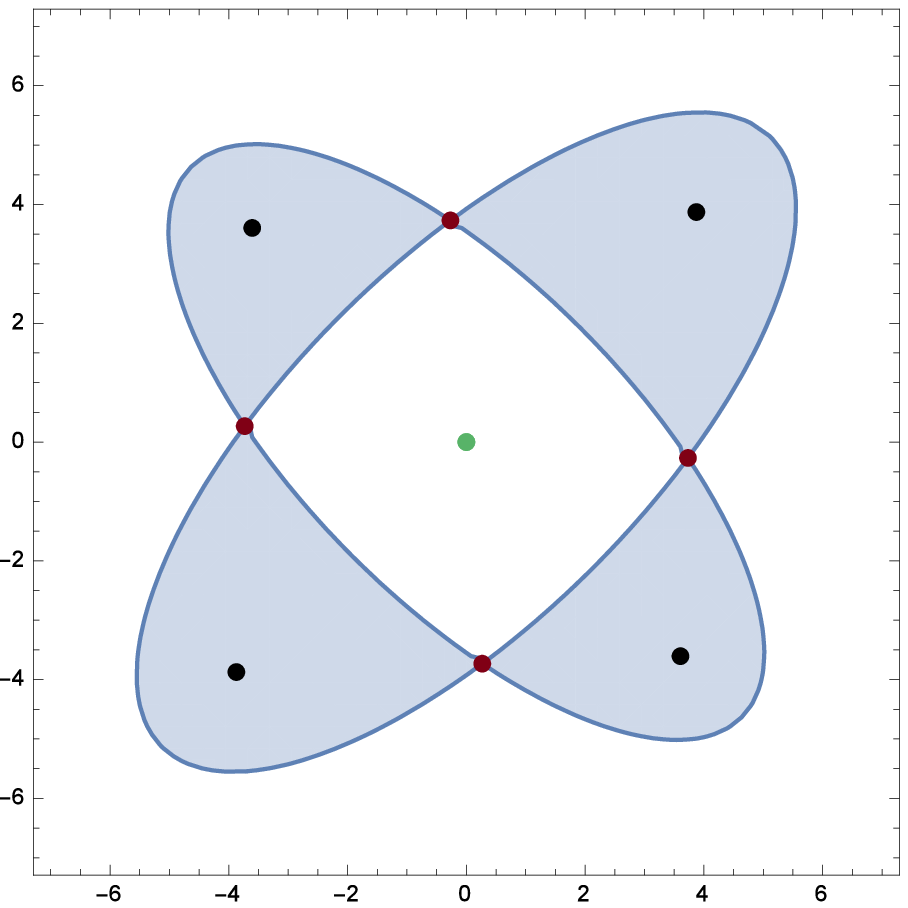} &
		\includegraphics[width=0.21\textwidth]{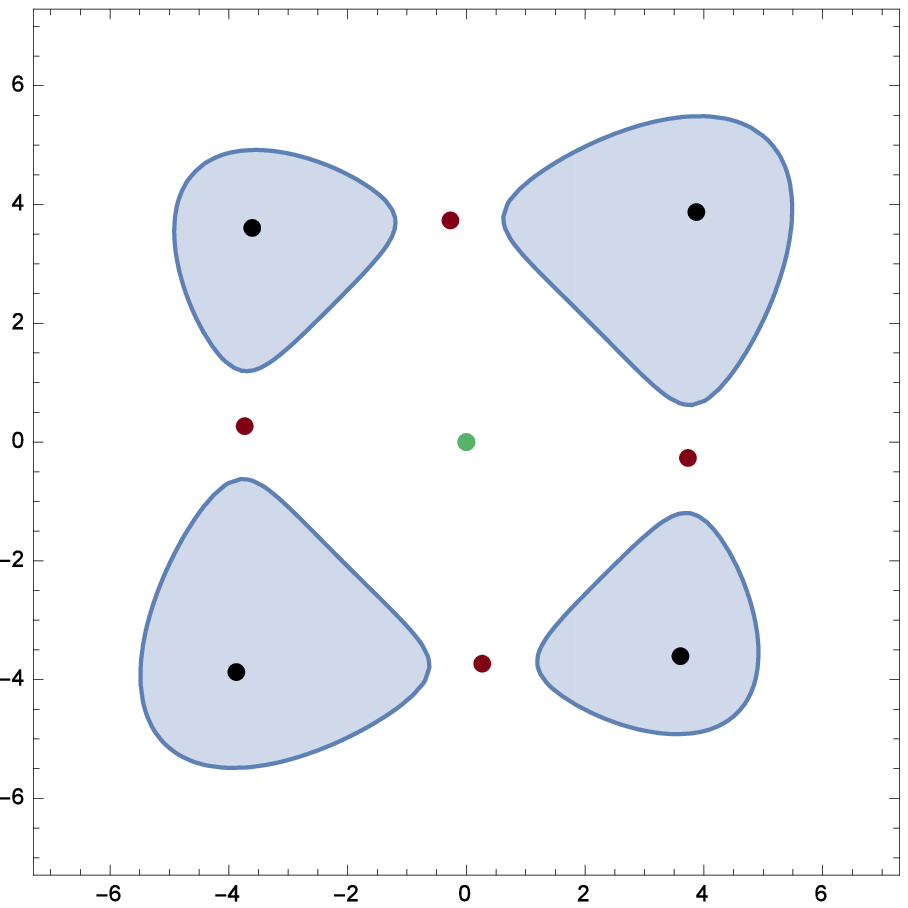}		\\
		(a)  & (b) & (c) & (d) \\[4pt]
	\end{tabular}
	\caption{ \footnotesize{		
			We take $U(x) = \frac{1}{4} (x_1^4 + x_2^4) + \frac{2-\aa^2}{2} (x_1^2 + x_2^2) - x_1x_2$ where $\aa = 4$ as an example. In these four figures, we use the blue regions to denote the Hill's regions. The green dots are the local maxima; the red dots are the saddle; and the black dots are the local minimum points. Both $(\sqrt{15},\sqrt{15})$ and $(-\sqrt{15},-\sqrt{15})$ are the global minimum points.
			When $c \geq 0$, $\cR_c$ is one simply connected closed set shown in (a) and all the critical points are contained in $\cR_c$.
			When $c \in [c_1, 0)$ with $c_1 = -\frac{98}{2}$, $\cR_c$ is still connected but not simple connected shown in (b) and there are saddles and local minimum points are contained in $\cR_c$. When $c \gtrsim c_1$, the ``necks'' are shown as the red dots in (c). When $c \in [c_2, c_1)$, $\cR_c$ is the union of four simply connected components shown in (d) and only local minimum points are contained in $\cR_c$. When $c\in [c_1,0)$,  the transit may happen which is shown in (a)-(c); when $c < c_1$, the transit cannot happen which is shown in (d).}}
	\label{fig:Hill.R2}
\end{figure}

\begin{theorem}\lb{thm:mpt}
	If $x(t)$ is a transit orbit in Hill's region $\cR_c$ with the Hamiltonian energy $c$, then $c\geq U_s$; if $c < U_s$, then $x(t)$ is a capture orbit in $\cR_c$.
\end{theorem}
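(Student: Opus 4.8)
The plan is to combine conservation of the Hamiltonian energy with a Morse‑theoretic and mountain‑pass description of the Hill's region $\cR_c=\{U<c\}$. First I would record two preliminary observations. Since $\dot\aa\equiv0$, $\frac{\d}{\d t}H(x,y)=0$ along a solution of \eqref{eqn:SB.11} with energy $c$, so $U(x(t))=c-\frac12\sum_i y_i^2(t)\le c$ and the orbit stays in $\ol{\cR_c}=\{U\le c\}$; and since the quartic term $\frac14\sum_i x_i^4$ dominates the quadratic ones, $U$ is coercive, so $\ol{\cR_c}$ is compact. Because $t\mapsto x(t)$ is continuous and $\R$ is connected, $x(\R)$ lies in the closure of a single connected component $V$ of $\cR_c$ — a non‑constant orbit meets $\partial V\subseteq\{U=c\}$ only at regular turning points of $U$, where $\ddot x=-\nabla U(x)$ points back into $V$, while an orbit hitting a critical point with zero velocity is constant. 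Thus the whole statement reduces to counting the local minima of $U$ contained in one such $\ol V$ when $c<U_s$.

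The key claim I would establish is: if $c<U_s$ then every component $V$ of $\cR_c$ contains exactly one local minimum of $U$, no saddle, and distinct components have disjoint closures. ``No saddle in $\ol V$'' is immediate, since every $p\in\cC_s(U)$ has $U(p)\ge U_s>c$. ``At least one local minimum'': $U$ attains its minimum over the compact set $\ol V$ at an interior point of $V$ (if it were attained on $\partial V$ one would get $U\equiv c$ on all of $\ol V$, contradicting $V\ne\emptyset$), and such a point is a local minimum. ``At most one'': if $x_1\ne x_2$ were local minima in the open, hence path‑connected, set $V$, I would run the mountain pass theorem on $U$ along paths joining $x_1$ to $x_2$ inside $V$ — admissible because the coercive smooth $U$ satisfies the Palais–Smale condition — obtaining a critical point $p\in V$ that is not a local minimum, with $U(p)=\inf_\gamma\max_{s}U(\gamma(s))<c$; by Proposition~\ref{prop:3n.cp} and the classification of the critical points of $U$ for $\aa>\aa^*$, such a $p$ is a saddle, so $U_s\le U(p)<c$, a contradiction. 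For closure‑disjointness, a common point $q\in\ol V\cap\ol{V'}$ would satisfy $U(q)=c$ with $q$ (since $c$ lies below every saddle value) a regular point or a local extremum of $U$; near a regular point $\cR_c$ is a connected half‑space, near a local maximum a connected punctured neighborhood, and near a local minimum empty — in no case can $q$ be shared by two distinct components.

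Granting this claim, the two assertions follow quickly. Assume $c<U_s$. By the above, the orbit lies in one $\ol V$, which contains a unique local minimum $x^*$ and, being compact, lies at a positive distance $\dl$ from every other local minimum (the local minima are finite in number and none but $x^*$ meets $\ol V$). Hence $x(t)$ never enters the $\dl$‑ball about any local minimum other than $x^*$, so for any choice of $t_3$ the orbit satisfies the requirement of a capture orbit in the sense of Definition~\ref{def:cap.tst}. In particular it never visits neighborhoods of two distinct local minima, so it is not transit; reading this contrapositively, a transit orbit in $\cR_c$ is forced to have $c\ge U_s$ (and in fact strictly, $c>U_s$). This proves both parts of Theorem~\ref{thm:mpt}.

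The step I expect to be the main obstacle is the mountain pass argument forbidding two local minima in one component. Verifying the Palais–Smale hypothesis is immediate from coercivity of the quartic $U$, but one must also ensure that the mountain pass critical point genuinely belongs to $\cC_s(U)$ and is not a degenerate critical point escaping the classification — this is exactly where Proposition~\ref{prop:3n.cp} and the full description of the critical set of $U$ for $\aa>\aa^*$ are needed, so that ``not a local minimum'' is synonymous with ``saddle'' and $U_s$ bounds its critical value. The remaining ingredients — local connectedness of $\{U<c\}$ near regular points and extrema, finiteness of the set of local minima, and the turning‑point bookkeeping that keeps an orbit inside a single component — are routine.
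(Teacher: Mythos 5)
Your proposal is correct in substance, and its engine is the same as the paper's: the mountain pass theorem (with the Palais--Smale condition from coercivity and the non-degenerate classification of critical points for $\aa>\aa_*$) forces any path joining two distinct local minima at levels below $U_s$ to produce a saddle with critical value below $U_s$, contradicting the definition of $U_s$. The packaging, however, is genuinely different. The paper argues by contradiction directly from a transit orbit: it uses the negative gradient flow (its Lemma \ref{lem:nage.flow}) to connect the visited points of $\Nn(x_1)$, $\Nn(x_2)$ to the minima $x_1,x_2$, concatenates with the orbit to get an admissible path in $\cR_{U_s}$, and then applies the mountain pass theorem plus the Hofer/Tian index-one result; capture is then deduced rather tersely via ``not capture $\Rightarrow$ transit.'' You instead prove a structural statement about the Hill's region --- for $c<U_s$ each connected component of $\cR_c$ contains exactly one local minimum, no saddle, and the closures of distinct components are disjoint --- combine it with confinement of the orbit to the closure of a single component (using energy conservation and that boundary points are regular turning points), and read off capture directly, obtaining non-transit as a corollary. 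This buys a cleaner and more quantitative capture statement (a positive distance $\dl$ to all other minima) and dispenses with the gradient-flow connection lemma, at the price of the extra component bookkeeping. Two points you should make explicit to match the paper's level of rigor: when running the minimax inside the open component $V$ you must note that the deformation (negative gradient flow) decreases $U$ and therefore keeps paths in $V$, so the restricted mountain pass argument is legitimate; and ``not a local minimum $\Rightarrow$ saddle'' needs both the strict mountain-pass geometry around the non-degenerate minima (or the Hofer/Tian Morse-index-one result the paper cites) and the observation that the mountain pass point cannot be the local maximum at the origin, since $U(0)=0>U_s>c$ for $\aa>\aa_*$.
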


By Theorem \ref{thm:mpt}, for any orbit $x(t)$, if its Hamiltonian energy is lower than  $U_s$, then the signum vector of $x(t)$ is a constant vector.

To apply Theorem \ref{thm:mpt} in $\R^2$, we further study dynamics at the saddles which are called the ``neck'' in Section \ref{sec:transit.R2} by the low energy transit orbit in \cite{MR233535}.  We find three types of orbits at the ``neck'' which are asymptotic orbits, saddle transit orbits and saddle non-transit orbits.

When $\dot \aa(t) =0$, the Hamiltonian of \eqref{eqn:SB.Hami} is conserved along any solutions. It is impossible to achieve the global minimum of $H(x,y)$ along any orbits of solutions. Hence, we consider the case that $\dot \aa(t) > 0$ as in \cite{goto2019combinatorial} where the Hamiltonian $H(x,y,t)$  decreases along solutions by $\frac{\d H }{\d t} < 0$.
However, the lowest saddle potential energy $U_s$ also decreases with $t$ in this case. Thus we need to define
a capture set which depends on $t$. Before that, we assume that $\dot \aa(t) >0$ and $\lim_{t\to \infty} \aa(t) = \aa_{\infty}> 8\aa_*$ where $\aa_*$ is as in Theorem \ref{thm:main}. Then there exists $t_0$ such that $\aa(t) > \frac{1}{4} \aa_{\infty}$ for all $t > t_0$.
Therefore, when $t > t_0$, the correspondence between the global minimum point of $U(x)$ and the minimizer of $E$ in Theorem \ref{thm:main} holds because $\aa(t) > 2 \aa_*$.

Define {\it  the capture set} $\P(t)$ of system \eqref{eqn:SB.11} as
\begin{align}\lb{eqn:p.t}
	\P (t) :=  \bigg\{x(t)\in \R^n \bigg| &|x(t)|^2 > R_0; \; H(x,\dot{x}, t) \leq \min\{U_{R_0}(t), U_B(t)\}, \\
	& \;  (x(t), \dot{x}(t)) \; \text{is a solution of \eqref{eqn:SB.11}} \bigg\}, \nn
\end{align}
where $R_0$ is a constant defined by \eqref{eqn:R0}, $U_{R_0}(t)$ and $U_B(t)$ are two functions of $t$ defined in \eqref{eqn:UR0} and \eqref{eqn:U_B} respectively.

\begin{theorem}\lb{thm:cap}
	Suppose $x(t) \in C^2([0,\infty), \R^n)$ is an orbit of the system \eqref{eqn:SB.11}, $\dot \aa(t)>0$ and $\lim_{t\to \infty} \aa(t) =\aa_{\infty}$. If there exists  $t_*> t_0$ with $x(t_*) \in \P(t_*)$, then $x(t) \in \P(t)$ for all $t\geq t_*$.
\end{theorem}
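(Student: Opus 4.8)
The plan is a forward-invariance (first-exit-time) argument powered by the dissipation of the time-dependent Hamiltonian. The starting point is the identity that along any solution $(x(t),\dot x(t))$ of \eqref{eqn:SB.11} the contributions of the $x$- and $y$-variables to $\frac{\d}{\d t}H$ cancel by Hamilton's equations, so that
\[
\frac{\d}{\d t}\,H(x(t),\dot x(t),t)=\partial_t U(x(t),t)=-\aa(t)\dot\aa(t)\,|x(t)|^{2}.
\]
Since $t_*>t_0$ we have $\aa(t)>\tfrac14\aa_\infty>2\aa_*>0$ and $\dot\aa(t)>0$ for every $t\ge t_*$, hence $H$ is strictly decreasing along the orbit whenever $x(t)\ne 0$; in particular $\frac{\d}{\d t}H(x(t),\dot x(t),t)<-\aa(t)\dot\aa(t)R_0$ whenever $|x(t)|^{2}>R_0$.

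Next I would estimate the threshold $g(t):=\min\{U_{R_0}(t),U_B(t)\}$ from below. Because $U(x,t)$ depends on $t$ only through the term $\tfrac12(\bb-\aa^{2}(t))|x|^{2}$, on the sphere $\{|x|^{2}=R_0\}$ that term equals $\tfrac12(\bb-\aa^{2}(t))R_0$, which is the same at every point of the sphere; so by \eqref{eqn:UR0} the function $U_{R_0}(t)$ differs from $-\tfrac12\aa^{2}(t)R_0$ by an additive constant, and therefore $\frac{\d}{\d t}U_{R_0}(t)=-\aa(t)\dot\aa(t)R_0$ exactly. For $U_B$ I would apply the envelope theorem to \eqref{eqn:U_B}: $U_B$ is locally Lipschitz and its one-sided derivatives are of the form $-\aa(t)\dot\aa(t)|x_B^{*}(t)|^{2}$ at minimizing configurations $x_B^{*}(t)$, and by the choices of the constant $R_0$ in \eqref{eqn:R0} and of $U_B$ in \eqref{eqn:U_B} — using the bracketing $\tfrac14\aa_\infty<\aa(t)<\aa_\infty$ valid for $t\ge t_*$ — every such configuration satisfies $|x_B^{*}(t)|^{2}\le R_0$, so $\frac{\d^{+}}{\d t}U_B(t)\ge-\aa(t)\dot\aa(t)R_0$. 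Taking the pointwise minimum, $g$ is locally Lipschitz and $\frac{\d^{+}}{\d t}g(t)\ge-\aa(t)\dot\aa(t)R_0$.

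Combining the two estimates, $\phi(t):=H(x(t),\dot x(t),t)-g(t)$ satisfies $\frac{\d^{+}}{\d t}\phi(t)\le-\aa(t)\dot\aa(t)\bigl(|x(t)|^{2}-R_0\bigr)<0$ at every $t\ge t_*$ with $|x(t)|^{2}>R_0$. I would then run the exit argument: $\phi(t_*)\le 0$ and $|x(t_*)|^{2}>R_0$, so set $\tau:=\sup\{s>t_*:\ |x(t)|^{2}>R_0\ \text{for all}\ t\in[t_*,s)\}$. On $[t_*,\tau)$ the above inequality makes $\phi$ strictly decreasing (a locally Lipschitz function whose right derivative is everywhere negative on an interval is strictly decreasing there), so for $t\in(t_*,\tau)$ we get $U(x(t),t)\le H(x(t),\dot x(t),t)<g(t)\le U_{R_0}(t)=\min_{|x|^{2}=R_0}U(\cdot,t)$, which already rules out $|x(t)|^{2}=R_0$. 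If $\tau<\infty$, then $|x(\tau)|^{2}=R_0$ by continuity while $\phi(\tau)<0$, which forces $U(x(\tau),\tau)<\min_{|x|^{2}=R_0}U(\cdot,\tau)\le U(x(\tau),\tau)$, a contradiction; hence $\tau=\infty$. Thus $|x(t)|^{2}>R_0$ and $\phi$ is strictly decreasing for all $t\ge t_*$, so $H(x(t),\dot x(t),t)\le g(t)=\min\{U_{R_0}(t),U_B(t)\}$ there; both defining conditions of $\P(t)$ then hold and $x(t)\in\P(t)$ for all $t\ge t_*$.

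I expect the crux to be the second step: verifying that the configurations realizing $U_B(t)$ remain inside $\{|x|^{2}\le R_0\}$, so that $\frac{\d}{\d t}U_B(t)\ge-\aa(t)\dot\aa(t)R_0$ and hence the threshold $g(t)$ cannot drop faster than $H$ along a captured orbit. This is exactly what the definitions of $R_0$ in \eqref{eqn:R0} and of $U_B$ in \eqref{eqn:U_B} are engineered to secure, and it is where the hypotheses $\dot\aa>0$, $\aa_\infty>8\aa_*$ and $\aa(t)>\tfrac14\aa_\infty$ for $t>t_0$ (so that $\aa(t)$ is bracketed for $t\ge t_*>t_0$) come in; a routine secondary point is that $\min\{U_{R_0},U_B\}$ is only piecewise smooth, so the whole argument must be phrased with one-sided (Dini) derivatives together with the elementary monotonicity criterion invoked above.
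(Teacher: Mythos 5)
Your handling of the $U_{R_0}$ condition is correct and is essentially the paper's first step in differential form: on the sphere $\{|x|^2=R_0\}$ the time dependence of $U$ is the common additive term $-\tfrac{1}{2}\aa^2(t)R_0$, so $\frac{\d}{\d t}U_{R_0}(t)=-\aa\dot\aa R_0$, while $\frac{\d}{\d t}H=-\aa\dot\aa|x|^2\le-\aa\dot\aa R_0$ as long as $|x|^2\ge R_0$; the paper packages exactly this comparison as a first-crossing-time contradiction with the two rates integrated from $t_*$ to the crossing time.

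The genuine gap is your treatment of $U_B$. Equation \eqref{eqn:U_B} defines $U_B$ in closed form, $U_B(t)=-\frac{(n-1)\aa^4(t)}{4}-B_5\,\aa^2(t)$; it is not a minimum over configurations (it is an explicit lower bound for the saddle level $U_s(t)$ supplied by Lemma \ref{lem:U_s}), so there are no ``minimizing configurations $x_B^*(t)$'' and no envelope theorem to invoke. Differentiating directly, $\frac{\d U_B}{\d t}=-\aa\dot\aa\bigl((n-1)\aa^2(t)+2B_5\bigr)$, and your claimed bound $\frac{\d^{+}}{\d t}U_B\ge-\aa\dot\aa R_0$ would require $(n-1)\aa^2(t)+2B_5\le R_0=\frac{(n-1)\aa_\infty^2}{2}+B_5$, which fails whenever $\aa^2(t)>\tfrac{1}{2}\aa_\infty^2$ --- by \eqref{eqn:at.ainfty} this is precisely the regime $t>t_2$ in which the capture set is studied. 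Hence the threshold $g=\min\{U_{R_0},U_B\}$ can drop strictly faster than $-\aa\dot\aa R_0$, and the monotonicity of $\phi=H-g$ does not follow from $|x|^2>R_0$ alone. (Replacing $U_B$ by the genuine minimum $U_s(t)$ over saddles does not rescue the step either: the minimizing saddle has $|x_s^*(t)|^2\approx(n-1)\aa^2(t)$, which also exceeds $R_0$ in this regime, so the inequality $|x_B^*|^2\le R_0$ that you hoped the definitions were ``engineered to secure'' is not available.) The comparison for this branch has to be run in the opposite direction, which is the paper's second step: along an orbit already known to satisfy $|x(t)|^2>R_0$ one compares $\frac{\d H}{\d t}=-\aa\dot\aa|x|^2$ with $\frac{\d U_B}{\d t}$ directly, using the explicit coefficients in \eqref{eqn:R0} and \eqref{eqn:U_B}, to get $\frac{\d H}{\d t}\le\frac{\d U_B}{\d t}$, i.e.\ $H$ decreases at least as fast as $U_B$, so $H\le U_B$ propagates forward from $t_*$; combined with your (correct) $U_{R_0}$ argument this yields $x(t)\in\P(t)$ for all $t\ge t_*$. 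As written, your proof does not establish $H(x(t),\dot x(t),t)\le U_B(t)$ for $t\ge t_*$.
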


By Theorem \ref{thm:mpt} and Theorem \ref{thm:cap}, we further prove that $x(t)$ is captured in $\P (t)$ and $\sgn(x(t))$ will be fixed for all $t \geq t_*$.
If $x(t)$ is in the capture set at some $t_*$, then SB Algorithm on solving \eqref{eqn:SB.11} can be {\it stopped} because $\sgn (x(t))$ is fixed for $t \geq t_*$.
This can be interpreted as one analogue of convergence.
As an example, we will discuss this convergence for the Ising problem in $\R^2$  in Section \ref{sec:cap.r2}.

Summarizing, we reveal the correspondence that the minimizer of the Ising problem corresponds exactly to global minimum points of the dynamical system algorithms, and provides a rigorous theoretical mathematical foundation for these algorithms.
Furthermore, based on our results, some eminent Ising model-related dynamical systems, including CIM and adiabatic Hamiltonian systems, can be explained.
Moreover, we introduce the capture and transit to describe the convergence behavior of SB algorithm, which provide a novel aspect to understand this algorithm.
To the best of our knowledge, this is the first work to give the mathematical mechanism of dynamical system algorithms for the Ising model and discuss the convergence of algorithms by using celestial mechanics, Morse theory and variational methods.

This paper is organized as follows. In Section \ref{sec:opt.set}, we introduce necessary preliminaries on Morse theory, prove the main result Theorem \ref{thm:main} and give two examples to explain our results. In Section \ref{sec:app.mds}, we revisit some dynamical system algorithms (CIM,  KPO and SB algorithm) and  prove the global minimum points found by CIM,  KPO and SB algorithm are minimizers of the Ising model mathematically. In Section \ref{sec:tran.cap}, we discuss the transit and capture of SB algorithm, and prove Theorem \ref{thm:mpt}-\ref{thm:cap}.

\setcounter{equation}{0}
\setcounter{figure}{0}
\section{Mathematical mechanism of continuous models on the Ising model}
\lb{sec:opt.set}
In this section, we first introduce some preliminaries on Morse theory, then transfer the minimizing Ising model in combinatorial optimization to looking for the global minimum points of a smooth function and prove  our main result Theorem \ref{thm:main}. Last  we give two examples in $\R^2$ and $\R^3$ to explain our results.

\subsection{Mathematical analysis on the continuous models}

The {\it Morse index} of a function $f$ at a critical point is given as follows.

\begin{definition}
	Suppose that $f$ is a smooth real value function on $\R^n$ and $x$ is a critical point of $f$, i.e., $\nabla f = 0$. The Morse index $i(x)$ of $f$ at $x$ is defined as the number of negative eigenvalues of the Hessian $D^2 f$ counted with multiplicity and the nullity $\nu (x)$ is the dimension of kernel at $x$. Namely,
	\begin{align}
		i_f(x) &:= \max \left\{\operatorname{dim} V\ |\  V \subset \mathbf{R}^{n}\; \mbox{is a subspace}, v^{T} D^2f(x) v<0, \forall v \in V \backslash\{0\}\right\}, \nn \\
		\nu_f(x) &:= \dim \ker D^2 f(x). \nn
	\end{align}
	If $\nu_f(x) = 0$, then $f$ is called non-degenerate at $x$.
\end{definition}

Via the Morse index, the critical points of $f$ can be classified into the {\it local maximum point} whose Morse index is $n$, the {\it local minimum point} whose Morse index is $0$ and the {\it saddle point} whose Morse index is between $0$ and $n$. The sets of above classification are denoted by $\cC_n(f)$, $\cC_0(f)$ and $\cC_s(f)$ respectively. We refer readers to \cite{audin2014morse} and \cite{milnor2016morse} for more details on Morse theory.

Let
\begin{align}
	\bar{U}(x) = \sum_{i=1}^{n}\frac{1}{4} x_{i}^{4}-\frac{\aa^2}{2}x^Tx. \lb{eqn:bar.U}
\end{align}
Then $U(x) = \bar{U}(x)+  \frac{\bb}{2}x^Tx- \frac{1}{2} x^T S x$ by \eqref{eqn:pote}.
Denote by $\cC(U)$ and $\cC(\bar{U})$ the sets of all critical points of $U(x)$ and $\bar U(x)$ respectively. It is direct to obtain that
\begin{align}
	\cC(U)=\{x\in \R^n |\  (x_1^3 -\aa^2x_1,\dots,x_n^3-\aa^2x_n)^T -Sx =0\}, \lb{eqn:grad.U}
\end{align}
and
\begin{align}
	\cC(\bar U)=\{x\in \R^n |\ (x_1^3 -\aa^2x_1,\dots,x_n^3 -\aa^2x_n)^T = 0\}. \lb{eqn:grad.b.U}
\end{align}
Both $\cC(\bar{U})$ and $\cC(U)$ are non-empty since $0 \in \cC(U) \cap \cC(\bar{U})$.
Define the set
\begin{align}
	\A = \{-\aa, \aa\},\; \text{and}\quad \A_0 = \{-\aa,0,\aa\}.
\end{align}
Then we define
\begin{align}
	\A_0^n = \{-\aa, 0, \aa\}^n:= \overbrace{\{-\aa, 0, \aa\} \times \{-\aa, 0, \aa\}\times \cdots \{-\aa, 0, \aa\}}^{n}.
\end{align}
Also, $\A^n  = \{-\aa,\aa\}^n$.
Via the Morse index, the critical points of $\bar{U}(x)$ can be classified as follows.

\begin{lemma}\lb{lem:mor.bar.u}
	When $\aa > 0$, $\cC(\bar{U}) = \A_0^n$. More precisely,
	\begin{enumerate}[label=\roman*)]
		\item 	the only local maximum point is the origin, i.e., $\cC_n(\bar{U}) = \{0\}$;
		\item  the set of local minimum points is  $\cC_0(\bar{U}) =\A^n$;
		\item the set of saddle points is $\cC_s(\bar{U}) = \cC(\bar{U})\bs \l(\cC_n(\bar{U}) \cup\cC_0(\bar{U}) \r)$.
	\end{enumerate}
	Moreover, for $x\in \cC(\bar{U})$, we have
	\begin{align}
		i_{\bar{U}}(x) = \#\{j|x_j = 0, x_j \; \text{is the $j$th component of}\; x\}.
	\end{align}
\end{lemma}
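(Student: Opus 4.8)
The plan is to exploit the fact that $\bar U$ is a sum of one-variable functions. Write $g(t) = \frac14 t^4 - \frac{\aa^2}{2} t^2$, so that $\bar U(x) = \sum_{i=1}^n g(x_i)$; then $\nabla \bar U(x) = (g'(x_1),\dots,g'(x_n))$ and the Hessian $D^2\bar U(x) = \diag(g''(x_1),\dots,g''(x_n))$ is diagonal at every point. First I would compute $g'(t) = t^3 - \aa^2 t = t(t-\aa)(t+\aa)$, which both confirms \eqref{eqn:grad.b.U} and shows $\cC(\bar U) = \{-\aa,0,\aa\}^n = \A_0^n$, since each coordinate must independently solve $g'(x_i)=0$.

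Next I would differentiate once more: $g''(t) = 3t^2 - \aa^2$, so that $g''(0) = -\aa^2 < 0$ and $g''(\pm\aa) = 2\aa^2 > 0$ for $\aa > 0$. Hence at a critical point $x\in\A_0^n$ the Hessian is the diagonal matrix whose $j$th entry is $-\aa^2$ when $x_j = 0$ and $2\aa^2$ when $x_j = \pm\aa$. In particular no eigenvalue vanishes, so $\nu_{\bar U}(x) = 0$ (that is, $\bar U$ is non-degenerate at every critical point), and the number of negative eigenvalues — the Morse index — is exactly $\#\{j : x_j = 0\}$, which is the displayed formula.

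Finally I would read off the three cases from this count. If all components vanish then $x = 0$ and $i_{\bar U}(0) = n$, so $0$ is a local maximum; conversely any critical point of index $n$ must have all coordinates $0$, giving $\cC_n(\bar U) = \{0\}$. If no component vanishes then $x\in\A^n$ and $i_{\bar U}(x) = 0$, a local minimum; conversely index $0$ forces every coordinate into $\A = \{-\aa,\aa\}$, so $\cC_0(\bar U) = \A^n$. The remaining critical points — those with at least one but not all coordinates equal to $0$ — have Morse index strictly between $0$ and $n$, hence are saddles, which is precisely the stated complement $\cC_s(\bar U) = \cC(\bar U)\bs(\cC_n(\bar U)\cup\cC_0(\bar U))$.

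There is essentially no serious obstacle here; the separability reduces everything to a one-variable computation. The only point requiring a little care is that classifying a critical point by its Morse index into maximum/minimum/saddle is legitimate only because $\bar U$ is non-degenerate there, and this is exactly what the facts $g''(0)\neq 0$ and $g''(\pm\aa)\neq 0$ supply.
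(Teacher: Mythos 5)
Your proposal is correct and follows essentially the same route as the paper: solve $\nabla\bar U=0$ coordinate-wise to get $\cC(\bar U)=\A_0^n$, observe that the Hessian is the diagonal matrix with entries $3x_j^2-\aa^2$, and count negative entries to obtain the Morse index and the classification. The only cosmetic difference is that you package the coordinate-wise computation through the one-variable function $g$, which changes nothing of substance.
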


\begin{proof}
	Solve  $\nabla \bar{U}(x) = 0$ in \eqref{eqn:grad.b.U} directly and obtain that the roots are given by $x = (x_1, \dots, x_n)$ with $x_i \in \A_0$. Therefore, the number of the critical points of $\bar U(x)$ is $3^n$.
	
	The Hessian of $\bar{U}$ is given by
	$D^2 \bar{U}(x) = \diag\{3x_1^2 -\aa^2,\dots, 3 x_n^2 -\aa^2\}$. If $x_i = 0$, then $3x_i^2 -\aa^2 < 0$; if $x_i = \pm \aa$, then $3x_i^2 - \aa^2> 0$. Its Morse index is given by the number of $x_i = 0$.
	
	Therefore, the origin is the unique local maximum point; $x \in \A^n$ is the local minimum points. The rests are saddles, namely at least one $x_i=0$ and at least one $x_j \in  \A$.
\end{proof}

Note that $\cC_0(\bar U) = \A^n$ where $\A= \{-\aa, \aa\}$. Recall that candidates of Ising model is $C(E) = \{-1,1\}^n$. Via the signum map, the following holds.

\begin{corollary}\lb{lem:bar.U.sign.vec}
	$\{\sgn(x)|x\in \cC_0(\bar U)\} = C(E)$.
\end{corollary}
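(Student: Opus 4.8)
The plan is to simply combine the classification of the minimum points of $\bar U$ from Lemma~\ref{lem:mor.bar.u} with the elementary observation that the signum function restricted to the two-point set $\A=\{-\aa,\aa\}$ is a bijection onto $\{-1,1\}$, since $\aa>0$.

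Concretely, I would proceed as follows. First, invoke part ii) of Lemma~\ref{lem:mor.bar.u} to identify $\cC_0(\bar U)=\A^n=\{-\aa,\aa\}^n$. Next, take an arbitrary $x=(x_1,\dots,x_n)\in\cC_0(\bar U)$; then each $x_i\in\{-\aa,\aa\}$, so $\mathrm{sgn}(x_i)=1$ if $x_i=\aa$ and $\mathrm{sgn}(x_i)=-1$ if $x_i=-\aa$, whence $\sgn(x)\in\{-1,1\}^n=C(E)$. This gives the inclusion ``$\subseteq$''. For the reverse inclusion, given any $v=(v_1,\dots,v_n)\in C(E)=\{-1,1\}^n$, set $x:=\aa v=(\aa v_1,\dots,\aa v_n)$; since $\aa v_i\in\{-\aa,\aa\}$ we have $x\in\A^n=\cC_0(\bar U)$, and $\sgn(x)=(\mathrm{sgn}(\aa v_1),\dots,\mathrm{sgn}(\aa v_n))=(v_1,\dots,v_n)=v$ because $\aa>0$. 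Hence every element of $C(E)$ is realized as $\sgn(x)$ for some $x\in\cC_0(\bar U)$, giving ``$\supseteq$'' and therefore equality.

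There is essentially no obstacle here: the statement is an immediate corollary of Lemma~\ref{lem:mor.bar.u} once one notes that componentwise $\mathrm{sgn}$ sends $\A=\{-\aa,\aa\}$ to $\{-1,1\}$ bijectively (indeed $\sgn$ restricts to a bijection $\A^n\to C(E)$). The only point worth stating explicitly is the positivity of $\aa$, which is what guarantees $\mathrm{sgn}(\pm\aa)=\pm1$ and hence that no component of an element of $\cC_0(\bar U)$ is sent to $0$.
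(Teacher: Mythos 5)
Your argument is correct and is exactly the paper's (implicit) reasoning: the paper deduces the corollary directly from part ii) of Lemma \ref{lem:mor.bar.u} together with the observation that the componentwise signum map sends $\A^n=\{-\aa,\aa\}^n$ bijectively onto $\{-1,1\}^n=C(E)$ because $\aa>0$. Your two-inclusion write-up simply makes this explicit; nothing is missing.
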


For critical points of $U(x)$, we have an a priori estimate as follows.

\begin{lemma}\lb{lem:x.div.p.0}
	For any $\ep >0$, there exists an $\aa_1$ such that for any $\aa> \aa_1$ and $x \in  \cC(U)$,
	\begin{align}
		\frac{|x|}{\aa^2}< \ep.
	\end{align}
\end{lemma}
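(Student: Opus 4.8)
The plan is to extract an a priori bound on $|x|$ for critical points $x\in\cC(U)$ directly from the critical point equation \eqref{eqn:grad.U}, and then to observe that this bound grows only linearly in $\aa$, hence is $o(\aa^2)$.

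First I would use that $x\in\cC(U)$ means $x_i^3-\aa^2 x_i-(Sx)_i=0$ for every $i$. Multiplying the $i$th equation by $x_i$ and summing over $i$ produces the scalar identity
\begin{align}
\sum_{i=1}^n x_i^4 = \aa^2|x|^2 + x^T S x. \nn
\end{align}
Since $S$ is symmetric, $|x^T S x|\le \rho|x|^2$, where $\rho$ denotes the largest absolute value of an eigenvalue of $S$ (equivalently the spectral norm of $S$, a fixed constant independent of $\aa$). Hence $\sum_i x_i^4\le(\aa^2+\rho)|x|^2$. On the other hand, Cauchy--Schwarz gives $|x|^4=\l(\sum_i x_i^2\r)^2\le n\sum_i x_i^4$. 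Combining the two inequalities, and dividing by $|x|^2$ in the case $x\ne 0$ (the case $x=0$ being trivial since then $|x|/\aa^2=0<\ep$), I obtain the uniform bound
\begin{align}
|x|^2\le n(\aa^2+\rho), \qquad\text{so that}\qquad \frac{|x|}{\aa^2}\le \frac{\sqrt{n(\aa^2+\rho)}}{\aa^2} \le \frac{\sqrt{2n}}{\aa}\quad\text{whenever }\aa^2\ge\rho. \nn
\end{align}

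The conclusion is then immediate: given $\ep>0$, it suffices to take $\aa_1=\max\{\sqrt{\rho},\,\sqrt{2n}/\ep\}$, so that for every $\aa>\aa_1$ and every $x\in\cC(U)$ one has $|x|/\aa^2<\ep$. I do not expect any genuine obstacle here; the only points requiring a little care are (i) separating out the trivial critical point $x=0$ before dividing by $|x|^2$, and (ii) making the dependence of $\aa_1$ on $n$, $\ep$, and a fixed norm of $S$ explicit, since this uniformity is what later permits the bootstrapping of $\aa$ in Theorem \ref{thm:main} and in Section \ref{sec:tran.cap}. One could equivalently phrase the last step as $\sqrt{n(\aa^2+\rho)}/\aa^2\to 0$ as $\aa\to\infty$ and simply choose $\aa_1$ large enough.
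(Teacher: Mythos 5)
Your proof is correct, and it takes a genuinely different and in fact sharper route than the paper's. The paper rescales $z=x/\aa^2$, rewrites the critical point equation \eqref{eqn:devi.p3} as a fixed-point-type relation for $z$, and argues by contradiction along a sequence $\td\aa_k\to\infty$ that the $z_i$ are bounded, after which it passes to the limit to get $z_i\to 0$; this is an indirect compactness argument and, as written, is somewhat delicate because the critical set $\cC(U)$ itself varies with $\aa$. You instead test the critical point equation against $x$ itself, obtaining $\sum_i x_i^4\le(\aa^2+\rho)|x|^2$ with $\rho=\|S\|$, and combine this with the Cauchy--Schwarz inequality $|x|^4\le n\sum_i x_i^4$ to get the explicit a priori bound $|x|^2\le n(\aa^2+\rho)$, uniform over all of $\cC(U)$; this yields $|x|/\aa^2=\O(1/\aa)$, which is stronger than the $o(1)$ statement of the lemma and is of the correct order, since the nonzero critical points indeed have $|x|\sim\aa$. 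The explicit choice $\aa_1=\max\{\sqrt{\rho},\sqrt{2n}/\ep\}$ also makes the uniformity in $x$ and the dependence on $n$, $S$, $\ep$ transparent, which the paper's sequence argument leaves implicit. One small remark: the equation \eqref{eqn:grad.U} you quote omits the $\bb x_i$ term that follows from the definition \eqref{eqn:pote} (the paper's own proof uses the corrected form $x_i^3=(\aa^2-\bb)x_i+(Sx)_i$, cf.\ \eqref{eqn:devi.p3}); since $\bb>0$, including it only improves your inequality to $\sum_i x_i^4\le(\aa^2-\bb+\rho)|x|^2\le(\aa^2+\rho)|x|^2$, so your argument is unaffected.
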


\begin{proof}
	Each $x\in \cC(U)$ satisfies that
	\begin{align}\lb{eqn:devi.p3}
		(x_1^3,\dots, x_n^3)^T
		=\aa^2 x	+(S-\bb I_n)x.
	\end{align}
	Dividing by $1/\aa^6$, we have \eqref{eqn:devi.p3} can be rewritten as
	\begin{align}
		z^3_i=\frac{z_i}{\aa^2}+\frac{1}{\aa^4}\sum _{j=1}^n m _{ij}z_j, \lb{eqn:K.z.3}
	\end{align}
	where $z_i := x_i / \aa^2$ and $(m _{ij})_{n\times n} := S-\bb I_n$ for $1\leq i\leq n$.
	Arbitrarily choose one increasing sequence $\{\tilde \aa_k\}_{k=1}^{\infty}$ satisfying $\lim_{k\to \infty} \td \aa_k = \infty$. For all $i$, rewrite $z_i$ as $z_i(\td \aa_k)$.
	Either at least one $x\in \cC(U)$ and a sub-sequence of $\{\td \aa_k\}_{k=1}^{\infty}$ exists which denoted again by $\{\td \aa_k\}_{k=1}^{\infty}$ for simplicity such that $\{|z_i(\td \aa_k)|\}_{k = 1}^{\infty}$ is unbounded or for all $x \in \cC(U)$, $|z_i(\td \aa_k)|$ are bounded by a positive number $B_{1}$.  	
	
	Suppose that $|z_i(\td \aa_k)|$ is unbounded and
	$|z_\ell (\td \aa_j)|:= \max _{1\leq i\leq n}|z_i(\td \aa_j)|>j$.
	For each given $k$, by \eqref{eqn:K.z.3},
	\begin{align}
		1=\frac{1}{\td\aa_j ^2z_\ell^2 (\td \aa_j)}+\frac{1}{\aa_j^4}\sum _{i=1}^nm _{\ell i}\frac{z_i(\td\aa_j)}{z_\ell^3 (\td \aa_j)}.\lb{eqn:K.td.0}
	\end{align}
	It is a contradiction that the left hand side of \eqref{eqn:K.td.0} is a constant while the right hand side of \eqref{eqn:K.td.0} converges to zero when $j$ tends to infinity. Then all $|z_i(\aa)|$ are bounded by a positive constant $B_1$.
	
	Suppose $|z_i(\aa)| < B_1$ for all $i$ and $\aa >0$. It yields that
	\begin{align}
		\lim _{\aa\rightarrow \infty }z^3_i(\aa)=\lim _{\aa\rightarrow \infty }\left (\frac{z_i(\aa)}{\aa^2}+\frac{1}{\aa^4}\sum _{j=1}^nm _{ij}z_j(\aa)\right )=0,
	\end{align}
	which implies that $\lim_{\aa \to \infty} z_i(\aa) = 0$. The proof is completed.
\end{proof}

When $\aa$ is large enough, each critical point of $U(x)$ can be approximated by a unique critical point of $\bar{U}(x)$ as follows.

\begin{proposition}\lb{thm:ep.net}
	Let $\aa_1$ be as in Lemma \ref{lem:x.div.p.0}.
	For any given positive constant $B_2$, there exists an $\aa_2 > \aa_1$ such that for any $\aa > \aa_2$ and every $x\in \cC(U)$, there exists one $\bar{x} \in \cC(\bar{U})$ satisfying
	\begin{align}\lb{eqn:x.barx.bdd}
		|x-\bar{x}| < \frac{B_2}{\aa},
	\end{align}
	and $\bar x$ is uniquely determined by $x$.
	Furthermore, $i_U(x) = i_{\bar{U}}(\bar{x})$.
\end{proposition}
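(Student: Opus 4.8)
My plan is to \emph{bootstrap} the a priori bound of Lemma~\ref{lem:x.div.p.0}: I would first use it to show that each coordinate of a critical point $x\in\cC(U)$ lies within $O(1)$ of $\{0,\aa,-\aa\}$, then reinject the resulting improved bound to sharpen this to the $O(1/\aa)$ scale, then define $\bar x$ by rounding each coordinate of $x$ to the nearest element of $\A_0=\{-\aa,0,\aa\}$, and finally read off \eqref{eqn:x.barx.bdd}, the uniqueness of $\bar x$, and the equality of Morse indices from this rounding. Write $P:=S-\bb I_n$, a fixed symmetric matrix with operator norm $\|P\|$; by \eqref{eqn:grad.U} every $x\in\cC(U)$ satisfies $x_i^3-\aa^2 x_i=(Px)_i$, so $|x_i|\,|x_i^2-\aa^2|\le\|P\|\,|x|$ for each $i$. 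Applying Lemma~\ref{lem:x.div.p.0} (say with $\ep=1$) fixes $\aa_1$ with $|x|<\aa^2$ for $\aa>\aa_1$; then $|x_i|\,|x_i^2-\aa^2|<\|P\|\aa^2$, and the case split $|x_i|\le\aa/2$ versus $|x_i|>\aa/2$ gives respectively $|x_i|<\tfrac43\|P\|$ (since $|x_i^2-\aa^2|\ge\tfrac34\aa^2$) or $\big|\,|x_i|-\aa\,\big|\le 2\|P\|$ (since $|x_i^2-\aa^2|<2\|P\|\aa$, for $\aa$ large). Hence each $x_i$ is within a fixed distance of $\{0,\pm\aa\}$, so $|x|\le\sqrt n(\aa+2\|P\|)=O(\aa)$.

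Reinjecting $|x|=O(\aa)$ gives $|x_i|\,|x_i^2-\aa^2|\le\|P\|\,|x|=:\delta=O(\aa)$; for $\aa>4\|P\|$ the case split becomes a genuine dichotomy, so I set $J_0=\{i:|x_i|\le\aa/2\}$ and $J_1=\{i:|x_i|>\aa/2\}$. For $i\in J_0$ one gets $|x_i|\le\tfrac{4\delta}{3\aa^2}\le C_0/\aa$, and for $i\in J_1$, using $x_i^2-\aa^2=(Px)_i/x_i$, one gets $\big|\,|x_i|-\aa\,\big|\le 2\delta/\aa^2\le C_1/\aa$, with $C_0,C_1$ depending only on $n$ and $\|P\|$. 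Define $\bar x_i=0$ for $i\in J_0$ and $\bar x_i=\aa\,\sgn(x_i)$ for $i\in J_1$; then $\bar x\in\A_0^n=\cC(\bar U)$ by Lemma~\ref{lem:mor.bar.u}, and
\[
	|x-\bar x|^2=\sum_{i\in J_0}x_i^2+\sum_{i\in J_1}\big(|x_i|-\aa\big)^2\le\frac{n\max(C_0,C_1)^2}{\aa^2},
\]
so $|x-\bar x|\le C_2/\aa$ with $C_2$ a constant depending only on $n$, $S$ and $\bb$; this is \eqref{eqn:x.barx.bdd} for $\aa>\aa_2:=\max\{\aa_1,4\|P\|,\sqrt{2B_2}\}$ provided $B_2\ge C_2$ (the regime in which the proposition is applied). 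Uniqueness follows from the geometry of $\A_0^n$: distinct points of $\A_0^n$ differ by at least $\aa$ in some coordinate, so if two of them both lay within $B_2/\aa$ of $x$ then $\aa\le 2B_2/\aa$, contradicting $\aa>\aa_2$.

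For the Morse indices, $D^2U(x)=\diag\{3x_1^2-\aa^2,\dots,3x_n^2-\aa^2\}-P$. By the fine localization, $3x_i^2-\aa^2=-\aa^2+O(1)$ for $i\in J_0$ and $3x_i^2-\aa^2=2\aa^2+O(1)$ for $i\in J_1$, so the diagonal part has exactly $\#J_0$ negative and $n-\#J_0$ positive entries, all of magnitude $\Theta(\aa^2)$. Since $\|P\|$ is a fixed constant, Weyl's inequality shows that for $\aa$ large the $\#J_0$ negative and $n-\#J_0$ positive eigenvalues of the diagonal part persist for $D^2U(x)$ and none becomes zero; hence $x$ is nondegenerate and $i_U(x)=\#J_0$. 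On the other hand $i_{\bar U}(\bar x)=\#\{i:\bar x_i=0\}=\#J_0$ by Lemma~\ref{lem:mor.bar.u}, so $i_U(x)=i_{\bar U}(\bar x)$.

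The hard part is the two-step bootstrap. Lemma~\ref{lem:x.div.p.0} by itself only localizes each $x_i$ to within $O(1)$ of $\{0,\pm\aa\}$, which is too coarse --- it neither pins $\bar x$ down unambiguously nor dominates the $O(1)$-sized perturbation $P$ in the Hessian. One must feed the sharpened bound $|x|=O(\aa)$ back into $x_i(x_i^2-\aa^2)=(Px)_i$ and treat the ``near $0$'' coordinates --- where $|x_i^2-\aa^2|$ is $\Theta(\aa^2)$, so dividing by it gains a full power of $\aa$ --- separately from the ``near $\pm\aa$'' coordinates --- where one instead divides by $|x_i|+\aa\ge\aa$ --- in order to reach the $O(1/\aa)$ scale on which the definition of $\bar x$, the estimate \eqref{eqn:x.barx.bdd}, and the Hessian comparison all depend.
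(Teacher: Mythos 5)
Your proposal is correct, and its overall architecture coincides with the paper's: an a priori bound from Lemma \ref{lem:x.div.p.0}, a bootstrap to the $O(1/\aa)$ scale, the definition of $\bar x$ as the nearest point of $\A_0^n$, and a Weyl-inequality comparison of $D^2U(x)$ with its dominant diagonal part to match Morse indices. Where you genuinely diverge is in how the $O(1/\aa)$ localization is obtained. The paper bounds the coupling terms as in \eqref{eqn:xi.ep} and \eqref{eqn:xi.ep.2} and then sandwiches each coordinate between roots of the perturbed cubics $f_{\pm,\ep}$ and $\td f_{\pm,\ep}$, whose roots are controlled by the explicit Cardano-type computations of Claims 1--2 (Lemmas \ref{app:lem:ted.comp}--\ref{app:lem:ted.comp2} in the Appendix); you instead work directly with the critical-point identity $x_i\l(x_i^2-\aa^2\r)=\l((S-\bb I_n)x\r)_i$ and the case split $|x_i|\le\aa/2$ versus $|x_i|>\aa/2$, which yields the same trichotomy near $\{0,\pm\aa\}$ by elementary inequalities with explicit constants and avoids the appendix computations entirely; your separation argument for uniqueness (distinct points of $\A_0^n$ differ by $\aa$ in some coordinate) is likewise cleaner than the paper's definition of $\bar x$ via $\lim_{\aa\to\infty}x/\aa$. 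One caveat you correctly flag: like the paper's own Claim 2, which only asserts ``there exist $B_2$ and $\aa_6$,'' your argument produces \eqref{eqn:x.barx.bdd} for a specific constant $C_2$ depending on $n$, $S$, $\bb$, not for an arbitrarily small prescribed $B_2$; since $|x-\bar x|$ is genuinely of size comparable to $1/\aa$ in general, the proposition must indeed be read with an existential $B_2$, and in that (intended) reading both proofs deliver the same statement.
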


\begin{proof}
	Since $\aa \neq 0$, we have that
	\begin{align}
		\frac{1}{\aa^2}\nabla U(x) = &\frac{1}{\aa^2}(x_1^3, \dots, x_n^3)^T
		-\l(I_n-\frac{\bb}{\aa^2}I_n+\frac{S}{\aa^2}\r)(x_1, \dots, x_n)^T
		=0.		
	\end{align}
	According to Lemma \ref{lem:x.div.p.0}, for any given $\ep > 0$, there exists an $\aa_3 > \aa_1$ such that for all $\aa \geq \aa_3$,
	\begin{align}
		\l|\frac{\bb}{\aa^2} x_i - \l(\sum_{j = 1}^{n}s_{ij}\frac{x_j}{\aa^2}\r)\r| < \ep, \lb{eqn:xi.ep}
	\end{align}
	for all $i$.
	Let $f_{\pm,\ep}(x)= \frac{1}{\aa^2}x^3 -x \pm \ep$.  	
	Suppose  $\aa > \aa_4 := \sqrt[4]{27 \ep^2/4}$. Note that $f_{+,\ep}(x) = 0$ (resp. $f_{-,\ep}(x) = 0$)  possesses three real roots, denoted by $x_{+,i}$ (resp. $x_{-,i}$) where $i=1, 2, 3$.
	Then for all $i$,
	\begin{align}
		f_{-,\ep}(x_i)< \frac{1}{\aa^2}x_i^3 -x_i +\frac{\bb}{\aa^2} x_i + \l(\sum_{j = 1}^{n}s_{ij}\frac{x_j}{\aa^2}\r) <f_{+,\ep}(x_i). \lb{eqn:f.K.f}
	\end{align}
	For any $\lm\in \R$, if $f_{-,\ep}(\lm)<0$ and $f_{+,\ep}(\lm) > 0$, then $\lm \in (x_{+,1} , x_{-,1}) \cup (x_{-,2},x_{+,2}) \cup (x_{+,3},x_{-,3})$. Hence, every $x_i$ of the critical point $x$ satisfies
	\begin{align}
		x_i \in (x_{+,1} , x_{-,1}) \cup (x_{-,2},x_{+,2}) \cup (x_{+,3},x_{-,3}).
	\end{align}
	
	{\bf Claim 1. }{\it For any given $\ep > 0$, $f_{+,\ep}(x) = 0$ (resp. $f_{-,\ep}(x) = 0$) possesses three solutions $x_{+,i}$ (resp. $x_{-,i}$) for $i=1, 2, 3$ and $x_{+,i}$ (resp. $x_{-,i}$) satisfies}
	\begin{align}
		&\lim_{\aa \to \infty} |x_{+,1}+\aa| =\lim_{\aa \to \infty} |x_{+,3}-\aa| =  \frac{\ep}{2},\; \lim_{\aa \to \infty} |x_{+,2}| = \ep.\nn \\
		(\text{resp.} \quad &\lim_{\aa \to \infty} |x_{-,1}+\aa| =\lim_{\aa \to \infty} |x_{-,3}-\aa| =  \frac{\ep}{2},\; \lim_{\aa \to \infty} |x_{-,2}| = \ep. )\nn
	\end{align}
	
	We leave the tedious proof in Appendix.
	By Claim 1, $\lim_{\ep \to 0} \aa_4 = 0$ and the arbitrariness of $\ep$ in \eqref{eqn:xi.ep}, $x_i$ satisfies one of the following
	\begin{align}
		\lim_{\aa \to \infty} |x_i-\aa| =0, \; \lim_{\aa \to \infty} |x_i+\aa| = 0, \; \lim_{\aa \to \infty} |x_{i}| = 0,
	\end{align}
	for any $i$.
	Therefore,  for any $\ep > 0$, there exists an $\aa_5 > \aa_3$ such that for all $\aa \geq \aa_5$,
	\begin{align}
		\l|\frac{\bb}{\aa^2} x_i - \l(\sum_{j = 1}^{N}s_{ij}\frac{x_j}{\aa^2}\r)\r| < \frac{\ep}{\aa}. \lb{eqn:xi.ep.2}
	\end{align}
	
	Let $\td f_{\pm,\ep}(x) =\frac{1}{\aa^2}x^3-x \pm\frac{\ep}{\aa}$.

	{\bf Claim 2. }{\it For any given $\ep > 0$, $\td f_{+,\ep}(x)  = 0$  (resp. $\td f_{-,\ep}(x) = 0$) possesses three solutions $\td x_{-,i}$ (resp. $\td x_{-,i}$) for $1 \leq i \leq 3$. There exist $B_2$ and $\aa_6$ such that for all $\aa > \aa_6$,  $\td x_{+,i}$  (resp. $\td x_{-,i}$) satisfies that}
	\begin{align}
		&|\td x_{+,1}+\aa| < \frac{B_2}{\aa},\quad |\td x_{+,2}| < \frac{B_2}{\aa},\quad |\td x_{+,3}-\aa| < \frac{B_2}{\aa}.\nn \\		
		(resp. \quad &	|\td x_{-,1}+\aa| < \frac{B_2}{\aa},\quad |\td x_{-,2}| < \frac{B_2}{\aa},\quad |\td x_{-,3}-\aa| < \frac{B_2}{\aa}.)\nn
	\end{align}
	
	We also leave the tedious proof in Appendix.	
	By Claim 2,  $x_i$ satisfies one of following inequalities for all $\aa > \aa_6$,
	\begin{align}
		|x_i-\aa| < \frac{B_2}{\aa},\quad |x_i| < \frac{B_2}{\aa},\quad |x_i+\aa| < \frac{B_2}{\aa}.\lb{eqn:1.of.3}
	\end{align}
	By \eqref{eqn:1.of.3}, we define that $\bar v := \lim_{\aa \to \infty} x/\aa \in \{-1,0,1\}^n$. Let $\bar{x} := \aa v$ with $\bar{x}\in \A_0^n$ which is uniquely determined by $x$.
	With out loss of generality, assume $i_{\bar{U}}(\bar x) = i_0$. By \eqref{eqn:1.of.3}, the number of $x_i$s satisfying $|x_i-\aa| < \frac{B_2}{\aa}$ or $|x_i+\aa| < \frac{B_2}{\aa}$ is $i_0$ and the number of $x_i$s satisfying $ |x_i| < \frac{B_2}{\aa}$ is $n-i_0$ when $\aa> \aa_6$.	
	
	The Hessian of $U(x)$ is given by
	\begin{align}
		D^2 U =\diag \{3x_1^2 + \bb-\aa^2, \dots, 3x_n^2 + \bb-\aa^2\} -S.
	\end{align}
	Decompose $D^2 U$ as the sum of $\diag \{3x_1^2 -\aa^2, \dots, 3x_n^2 -\aa^2\}$ and $\bb I_n-S$.
	Suppose that the eigenvalues of $\frac{1}{\aa^2}D^2 U$ are
	\begin{align}
		\sg(\frac{1}{\aa^2}D^2 U) = \{\lm_1, \dots, \lm_n\},
	\end{align}
	where $\lm_1 \geq \cdots \geq \lm_n$. Furthermore, suppose that
	\begin{align}
		&\sg (\frac{1}{\aa^2}\diag \{3x_1^2 -\aa^2, \dots, 3x_n^2 -\aa^2\}) = \{\bar{\lm}_1, \dots, \bar{\lm}_n\},\\
		&\sg((\bb I_n-S)/\aa^2) =\{\mu_1, \dots, \mu_n\},
	\end{align}
	where $\bar{\lm}_1 \geq \cdots \geq  \bar{\lm}_n$ and $\mu_1 \geq \cdots \geq  \mu_n$.
	Since $(\bb I_n-S)$ is a constant matrix and $i_{\bar{U}}(\bar{x}) = i_0$, there exists an $\aa_7> \aa_6$ such that $\max\{|\mu_1|, |\mu_n|\} <1/3 $, $\bar{\lm}_{i} > 5/3$ for $1\leq  i \leq i_0$ and $\bar{\lm}_{i}< -2/3$ for $i_0 +1\leq  i \leq n$.
	According to the Weyl's inequality (cf. \cite[Theorem 4.3.1]{horn2012matrix}), $\lm_i$ satisfies that
	$\bar{\lm}_i +  \mu_n <\lm_i < \bar{\lm}_i + \mu_1$. Therefore, $\lm_i$ possesses the same sign as $\bar{\lm}_i$. Then $i_{U}(x) = i_0$ and the critical points of $U(x)$ are all non-degenerate.
	
	Let $\aa_2 = \max_{3\leq i\leq 7}\{\aa_i\}$. Then the proposition follows.
\end{proof}

As $\aa$ tends to infinity, every critical point $x$ of $U(x)$ satisfies $|x -\bar x| \to 0$ by \eqref{eqn:x.barx.bdd}.
According to Proposition \ref{thm:ep.net}, when $\aa > \aa_2$, $x$
can be written as
$x = \bar{x} +\dl$ where $\bar{x} \in \cC(\bar{U})$ and $|\dl| \sim \O(1/\aa)$.

\begin{corollary}\lb{coro:minimum}
	Suppose $x = (x_1,\dots,x_n)^T$ is one local minimum point of $U(x)$. There exists $\aa_8 > \aa_2$ such that following statements hold.
	\begin{enumerate}[label=\roman*)]
		\item $x= \bar{x} + \dl$ where $\bar{x} \in \A^n$ and $\dl = (\dl_1, \dots, \dl_n)$ with $\dl_i\sim \O(1/\aa)$; \lb{coro:it.i}
		\item  $x_i \neq 0$ for all $i\in \{1,\dots, n\}$;\lb{coro:it.ii}
		\item   $\mathrm{sgn}(x) = \mathrm{sgn}(\bar{x})$.\lb{coro:it.iii}
	\end{enumerate}
\end{corollary}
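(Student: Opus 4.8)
The plan is to deduce the whole corollary from Proposition~\ref{thm:ep.net} and the Morse-index formula of Lemma~\ref{lem:mor.bar.u}, finishing with an elementary triangle inequality. Fix some $B_2 > 0$ in Proposition~\ref{thm:ep.net} and let $\aa_2 = \aa_2(B_2)$ be the resulting threshold. First I would note that a local minimum point $x$ of $U$ is in particular a critical point with $i_U(x) = 0$, since $D^2 U(x)$ is positive semidefinite there. Assuming $\aa > \aa_2$, Proposition~\ref{thm:ep.net} then furnishes a \emph{unique} $\bar x \in \cC(\bar U) = \A_0^n$ with $|x - \bar x| < B_2/\aa$ and $i_{\bar U}(\bar x) = i_U(x) = 0$. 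By Lemma~\ref{lem:mor.bar.u}, $i_{\bar U}(\bar x)$ equals the number of vanishing components of $\bar x$, so $i_{\bar U}(\bar x) = 0$ forces every component $\bar x_i$ to lie in $\A = \{-\aa, \aa\}$, i.e. $\bar x \in \A^n$. Setting $\dl := x - \bar x = (\dl_1, \dots, \dl_n)$, this is exactly statement~\ref{coro:it.i}: $\bar x \in \A^n$ and $|\dl_i| \leq |\dl| < B_2/\aa \sim \O(1/\aa)$.

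For statements~\ref{coro:it.ii} and~\ref{coro:it.iii} I would just compare each $x_i$ with the nonzero number $\bar x_i = \pm \aa$. Take $\aa_8 := \max\{\aa_2, \sqrt{B_2}\} + 1$, so that $B_2/\aa < \aa$ whenever $\aa > \aa_8$. Then $|x_i - \bar x_i| < B_2/\aa < \aa = |\bar x_i|$ for every $i$, which at once yields $x_i \neq 0$ and, more precisely, that $x_i$ carries the same sign as $\bar x_i$: if $\bar x_i = \aa$ then $x_i > \aa - B_2/\aa > 0$, while if $\bar x_i = -\aa$ then $x_i < -\aa + B_2/\aa < 0$. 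Hence $\mathrm{sgn}(x_i) = \mathrm{sgn}(\bar x_i)$ for all $i$, i.e. $\sgn(x) = \sgn(\bar x)$, which proves~\ref{coro:it.ii} and~\ref{coro:it.iii}.

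I do not expect a real obstacle here: the corollary is essentially a repackaging of Proposition~\ref{thm:ep.net} in the special case of local minima, where the vanishing of the Morse index forces (via Lemma~\ref{lem:mor.bar.u}) the approximating critical point $\bar x$ of $\bar U$ to have no zero coordinate, and everything else is a one-line estimate $|x_i - \bar x_i| < \aa$. The only point requiring a little care is the order of the quantifiers on the constants: since $B_2$ in Proposition~\ref{thm:ep.net} may be chosen freely, fixing it at the outset avoids any circularity when one subsequently needs $\aa$ large enough — namely $\aa > \sqrt{B_2}$ — to separate $x_i$ from $0$; this extra requirement is simply absorbed into the definition of $\aa_8$.
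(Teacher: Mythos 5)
Your proposal is correct and follows essentially the same route as the paper: use Proposition~\ref{thm:ep.net} to get the unique nearby $\bar x$ with matching Morse index, conclude $\bar x \in \A^n$ from Lemma~\ref{lem:mor.bar.u}, and then compare $x_i$ with $\bar x_i = \pm\aa$ via the bound $|\dl_i| < B_2/\aa$. Your explicit choice $\aa_8 = \max\{\aa_2, \sqrt{B_2}\}+1$ simply makes precise the paper's ``choose a proper $\aa_8$'' step.
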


\begin{proof}
	Since $x$ is a local minimum point, both $i_U(x) = 0$ and $i_{\bar{U}}(\bar x) = 0$. By Lemma \ref{lem:mor.bar.u}, for each $i$, $\bar{x}_i$ of $\bar{x}$ satisfies $|\bar x_i| = \aa$. By \eqref{eqn:x.barx.bdd}, one can choose a proper $\aa_8 > \aa_2$ such that $\bar{x}_i +\dl_i  \neq 0$ and $\sgn(x_i) = \sgn(\bar{x}_i)$ when $\aa > \aa_8$ for all $i$.
\end{proof}

\begin{proposition}\lb{prop:3n.cp}
	For any given $\bb$ and $S$, there exists a sufficiently large $\aa_0 > \aa_8$ such that when $\aa > \aa_0$,
	\begin{enumerate}[label = \roman*)]
		\item $U(x)$ possesses $3^n$ critical points; \lb{prop:it.i}
		\item $U(x)$ possesses $2^n$ local minimum points;\lb{prop:it.ii}
		\item  $\{\sgn(x)|x\in \cC(\bar U)\} =C(E)$.
		\lb{prop:it.iii}
	\end{enumerate}
\end{proposition}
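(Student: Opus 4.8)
My plan is to upgrade Proposition~\ref{thm:ep.net} from the one-sided statement ``every critical point of $U$ is close to a unique critical point of $\bar U$'' to a genuine bijection. Concretely, for $\aa$ large let $\Phi\colon\cC(U)\to\cC(\bar U)=\A_0^n$ be the map sending $x\in\cC(U)$ to the unique $\bar x\in\A_0^n$ with $|x-\bar x|<B_2/\aa$ furnished by Proposition~\ref{thm:ep.net}. Since Lemma~\ref{lem:mor.bar.u} already gives $\#\cC(\bar U)=3^n$, identifies $\cC_0(\bar U)=\A^n$, and computes $i_{\bar U}$ at each critical point, all three items reduce to showing $\Phi$ is a bijection. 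For injectivity I would argue as follows: if $x,x'\in\cC(U)$ have $\Phi(x)=\Phi(x')=\bar x$, write $\nabla U(x)-\nabla U(x')=M(x-x')$ with $M=\int_0^1 D^2U\bigl(x'+t(x-x')\bigr)\,\d t$ and recall $D^2U(y)=\diag\{3y_1^2+\bb-\aa^2,\dots,3y_n^2+\bb-\aa^2\}-S$. Convexity of $B(\bar x,B_2/\aa)$ forces every $y$ on the segment to satisfy $y_i=\bar x_i+\O(1/\aa)$ with $\bar x_i\in\{0,\pm\aa\}$, so the $i$-th diagonal entry of the averaged diagonal part of $M$ equals $-\aa^2+\O(1)$ (when $\bar x_i=0$) or $2\aa^2+\O(1)$ (when $\bar x_i=\pm\aa$). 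Hence $M=\widetilde D-S$ with $\widetilde D$ diagonal and $\|\widetilde D^{-1}\|\le(\aa^2-C)^{-1}$ for a constant $C=C(\bb)$; for $\aa$ large $\|\widetilde D^{-1}S\|<1$, so $M$ is invertible and $x=x'$. This uses nothing about $x,x'$ beyond proximity to $\bar x$, so in particular no two critical points of $U$ share a $\Phi$-image.

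For surjectivity, fix $\bar x\in\A_0^n$. From \eqref{eqn:bar.U}, $\nabla\bar U(\bar x+w)_i=(3\bar x_i^2-\aa^2)w_i+3\bar x_i w_i^2+w_i^3$, so on $\partial B(\bar x,B_2/\aa)$ one gets $|\nabla\bar U(y)|\ge\aa^2(B_2/\aa)-\O(B_2^2/\aa)=B_2\aa-\O(B_2^2/\aa)$, while $\nabla U-\nabla\bar U=(\bb I_n-S)x$ has modulus at most $C'\aa$ there (using $|\bar x|\le\sqrt n\,\aa$) with $C'=C'(\bb,S,n)$. Enlarging $B_2$ once so that $B_2>2C'$ (legitimate, since $B_2$ in Proposition~\ref{thm:ep.net} is arbitrary; take the attached $\aa_2$), the homotopy $h_\theta(y)=\nabla\bar U(y)+\theta(\bb I_n-S)y$, $\theta\in[0,1]$, is nonvanishing on that sphere for $\aa$ large. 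Homotopy invariance of the Brouwer degree then gives $\deg\bigl(\nabla U,B(\bar x,B_2/\aa),0\bigr)=\deg\bigl(\nabla\bar U,B(\bar x,B_2/\aa),0\bigr)=\sign\det D^2\bar U(\bar x)=\pm1\neq0$, so $U$ has a critical point in $B(\bar x,B_2/\aa)$; as the points of $\A_0^n$ are $\ge\aa$ apart, its $\Phi$-image is exactly $\bar x$. (Equivalently, rescaling $z=x/\aa$ turns $\nabla U(x)=0$ into $z_i^3-z_i+\aa^{-2}\bigl(\bb z_i-\sum_j s_{ij}z_j\bigr)=0$, a $C^1$-small perturbation on compacta of $z\mapsto(z_i^3-z_i)_i$ whose zero set $\{-1,0,1\}^n$ is non-degenerate; the implicit function theorem yields one zero near each of the $3^n$ points, and Lemma~\ref{lem:x.div.p.0} excludes stray ones.) Together with the injectivity above and Proposition~\ref{thm:ep.net}, $\Phi$ is a bijection.

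The three items now follow. Item (i): $\#\cC(U)=\#\cC(\bar U)=3^n$. Item (ii): by Proposition~\ref{thm:ep.net} every critical point of $U$ is non-degenerate and $i_U(x)=i_{\bar U}(\Phi(x))$, so $\Phi$ restricts to a bijection $\cC_0(U)\to\cC_0(\bar U)$, and $\cC_0(\bar U)=\A^n$ has $2^n$ elements by Lemma~\ref{lem:mor.bar.u}; hence $U$ has exactly $2^n$ local minimum points. Item (iii) (which I read as $\{\sgn(x)\mid x\in\cC_0(U)\}=C(E)$, matching the discussion preceding the proposition): for $x\in\cC_0(U)$ Corollary~\ref{coro:minimum} gives $\sgn(x)=\sgn(\Phi(x))$ with $\Phi(x)\in\A^n$, and since $\Phi$ maps $\cC_0(U)$ onto $\cC_0(\bar U)$, Corollary~\ref{lem:bar.U.sign.vec} yields $\{\sgn(x)\mid x\in\cC_0(U)\}=\{\sgn(\bar x)\mid\bar x\in\cC_0(\bar U)\}=C(E)$. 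Finally set $\aa_0>\max\{\aa_8,\aa_9\}$, where $\aa_9$ absorbs the thresholds needed for the injectivity and degree estimates; every constant above depends only on the given data $\bb,S,n$.

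The hard part will be surjectivity of $\Phi$: the a priori estimates of Lemma~\ref{lem:x.div.p.0} and Proposition~\ref{thm:ep.net} only confine the critical points of $U$ to a neighborhood of $\cC(\bar U)$ and never produce one near each of the $3^n$ points of $\A_0^n$. The degree-theoretic (or implicit function) persistence argument supplies them, and it works only because of the scale separation $\|D^2\bar U(\bar x)\|\sim\aa^2$ against $\|\nabla U-\nabla\bar U\|\sim\aa$ near $\bar x$; everything else is bookkeeping of the thresholds $\aa_1,\dots,\aa_8$ together with the new one.
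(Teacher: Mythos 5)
Your proposal is correct, and its skeleton matches the paper's: both proofs reduce the three items to a one-to-one correspondence between $\cC(U)$ and $\cC(\bar U)=\A_0^n$ furnished by Proposition \ref{thm:ep.net}, transfer Morse indices, and then quote Lemma \ref{lem:mor.bar.u}, Corollary \ref{coro:minimum} and Corollary \ref{lem:bar.U.sign.vec} (you also read item iii) the way the paper intends, as a statement about $\cC_0(U)$). The difference is in how the two halves of the bijection are established. For the upper bound (at most one critical point of $U$ near each $\bar x$) the paper simply appeals to non-degeneracy to get a uniform $\dl$; your integral-averaged Hessian estimate $M=\widetilde D-S$ with $\|\widetilde D^{-1}S\|<1$ is a quantitative version of the same fact. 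For the existence half the routes genuinely diverge: the paper writes each row of $\nabla U=0$ as $f_i=0$ and counts sign changes of $f_i$ at $\pm 2\aa$, $\pm\aa/2$ to conclude that the system has $3^n$ roots, whereas you localize at each $\bar x\in\A_0^n$ and run a Brouwer-degree homotopy (equivalently, the implicit function theorem after rescaling $z=x/\aa$), exploiting the scale separation $\|\nabla\bar U\|\sim B_2\aa$ versus $\|\nabla U-\nabla\bar U\|\sim C'\aa$ on the small spheres. Your argument is the more airtight of the two: the paper's componentwise intermediate-value step is delicate because each $f_i$ depends on all coordinates, so passing from ``each scalar equation has three roots'' to ``the coupled system has $3^n$ roots'' really needs a Poincar\'e--Miranda or degree-type argument of exactly the kind you supply; the paper's version is more elementary but leaves that passage implicit. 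One cosmetic point: in the parenthetical IFT variant, Lemma \ref{lem:x.div.p.0} alone does not exclude stray zeros (it only gives $|x|/\aa^2\to 0$); the exclusion comes from Proposition \ref{thm:ep.net} plus your local uniqueness, which you do invoke in the main degree argument, so nothing is actually missing.
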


\begin{proof}
	Note that
	$\bar U(x)$ possesses $3^n$ critical points. 	
	By Proposition \ref{thm:ep.net}, $D^2 U$ at the critical points is non-degenerate and the critical points of $U(x)$ tend to the critical points of $\bar{U}(x)$ by \eqref{eqn:x.barx.bdd}.
	By the non-degeneracy, for every $\bar x\in \cC(\bar U)$, there exists a uniform $\dl$ and a uniform $\aa_9 > \aa_8$ such that only one $x\in \cC(U)$ satisfies $|x -\bar{x}| < \dl$ for all $\aa > \aa_9$. Therefore, the number of critical points of $U(x)$ is at most $3^n$.
	
	Each row of $\nabla U = 0$ can be regarded as $f_i = 0$ where $ f_i(x) = \frac{1}{\aa^2}x^3 -x +\frac{\bb}{\aa^2} x + (\sum_{j = 1}^{N}s_{ij}\frac{x_j}{\aa^2})$. By \eqref{eqn:xi.ep.2}, there exists a sufficiently large constant $\aa_{10}$ such that when $\aa > \aa_{10}$, $f_i(-2\aa) < 0$, $f_i(-\aa/2) >0$, $f_i(\aa/2) <0$ and $f_i(2\aa) > 0$ hold. Therefore, $f_i(x)= 0$ possesses at least three roots for all $1\leq i \leq n$.
	
	Hence, when $\aa > \aa_0 := \max\{\aa_9, \aa_{10}\}$ , $\nabla U = 0$ possesses $3^n$ roots. It yields \ref{prop:it.i} of this proposition holds.
	
	By Proposition \ref{thm:ep.net}, $U(x)$ possesses $2^n$ local minimum points because $\bar U(x)$ possesses $2^n$ local minimum points. Then \ref{prop:it.ii} of this proposition follows.
	
	By \ref{coro:it.iii} of Corollary \ref{coro:minimum}, the signum vectors of local minimum points of $U(x)$ are the same as the ones of local minimum of $\bar U(x)$. By Lemma \ref{lem:bar.U.sign.vec}, \ref{prop:it.iii} of this proposition holds.
\end{proof}

Define that the maximum value of $U(x)$ among the minimum points and the lowest value of $U(x)$ energy among saddles as
\begin{align}
	U_{M} = \max_{x \in \cC_0(U)} U(x), \quad 	U_{s} &= \min_{x \in \cC_s(U)}U(x). \lb{eqn:Us}
\end{align}

\begin{lemma}
	There exists $\aa_{11} > \aa_0$, such that for any given $\aa > \aa_{11}$, if $x_s\in \cC_s(U)$ satisfying $U(x_{s})= U_{s}$, its Morse index $i(x_s)$ is $1$.
\end{lemma}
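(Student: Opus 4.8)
The plan is to estimate $U$ at its critical points to leading order in $\aa$ and observe that this leading term is governed by the Morse index, so that the lowest-lying saddles are precisely those of Morse index $1$. We may assume $n\ge 2$: for $n=1$ one has $S=0$, $U$ has no saddle, and the statement is vacuous.

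First I would invoke Proposition \ref{thm:ep.net}: for $\aa>\aa_0$ there is a bijection $x\mapsto\bar x$ between $\cC(U)$ and $\cC(\bar U)=\A_0^n$ with $|x-\bar x|<B_2/\aa$ and $i_U(x)=i_{\bar U}(\bar x)$; by Lemma \ref{lem:mor.bar.u}, if $\bar x$ has $k$ coordinates in $\{-\aa,\aa\}$ and $n-k$ coordinates equal to $0$ then $i_{\bar U}(\bar x)=n-k$. Writing $\bar x=\aa v$ with $v=\sgn(\bar x)\in\{-1,0,1\}^n$ and using $U=\bar U+\frac{\bb}{2}x^Tx-\frac12 x^T S x$, one computes
\[
U(\bar x)=-\frac{k}{4}\aa^4+\frac{\bb k}{2}\aa^2-\frac{\aa^2}{2}\,v^T S v ,
\]
so $\bigl|U(\bar x)+\frac{k}{4}\aa^4\bigr|\le C_0\aa^2$ with $C_0:=\frac{\bb n}{2}+\frac12\max_{v\in\{-1,0,1\}^n}|v^T S v|$ independent of $\aa$. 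Then I would bound $|U(x)-U(\bar x)|$ for the matching critical point: from $|x-\bar x|<B_2/\aa$ and the a priori bound $|x_i|\le\aa+B_2/\aa$ of \eqref{eqn:1.of.3}, a routine mean-value estimate (the quartic terms contribute $\O(\aa^3)\cdot\O(1/\aa)$, the quadratic and coupling terms $\O(1)$) gives $|U(x)-U(\bar x)|\le C_3\aa^2$ with $C_3$ independent of $\aa$ and of which critical point is chosen. Setting $k(x):=n-i_U(x)$ and $C_1:=C_0+C_3$ we obtain
\[
\Bigl|\,U(x)+\frac{k(x)}{4}\,\aa^4\,\Bigr|\le C_1\aa^2\qquad\text{for all }x\in\cC(U).
\]

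Finally I would compare the saddles. A saddle satisfies $1\le i_U(x)\le n-1$, i.e. $1\le k(x)\le n-1$. Since $\bar U$ has a critical point with exactly one vanishing coordinate, the bijection yields a saddle $x'$ of $U$ with $i_U(x')=1$, hence $U_s=\min_{x\in\cC_s(U)}U(x)\le U(x')\le-\frac{n-1}{4}\aa^4+C_1\aa^2$. Conversely, any saddle $x$ with $i_U(x)\ge 2$ has $k(x)\le n-2$, so $U(x)\ge-\frac{n-2}{4}\aa^4-C_1\aa^2$; and
\[
-\frac{n-2}{4}\aa^4-C_1\aa^2 \;>\; -\frac{n-1}{4}\aa^4+C_1\aa^2 \qquad\text{whenever }\aa^2>8C_1 .
\]
Thus with $\aa_{11}:=\max\{\aa_0,\sqrt{8C_1}\}$, for $\aa>\aa_{11}$ every saddle of Morse index $\ge 2$ has $U$-value strictly above $U_s$, so any $x_s\in\cC_s(U)$ with $U(x_s)=U_s$ necessarily has $i_U(x_s)=1$.

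The point that needs care is the uniformity of $C_0$ and $C_3$ in $\aa$ and over the finite set $\cC(U)$ — in particular one cannot merely use $U(x)\to U(\bar x)$, since both blow up like $\aa^4$; one genuinely needs the $\O(1/\aa)$ closeness $|x-\bar x|<B_2/\aa$ so that the error is $\O(\aa^2)$ and is dominated by the gap $\frac14\aa^4$ separating consecutive Morse indices. Both bounds are already supplied by Proposition \ref{thm:ep.net} and \eqref{eqn:1.of.3}, and the rest of the argument is elementary.
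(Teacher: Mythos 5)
Your proof is correct and follows essentially the same route as the paper: both expand a lowest saddle about its matching critical point of $\bar U$ and use $U(x_s)=-\frac{(n-j)\aa^4}{4}+\O(\aa^2)$ (the paper's \eqref{eqn:U.x.s}) to see that for large $\aa$ the index-$1$ saddles lie strictly below all saddles of higher index. You merely make the comparison and the uniformity of the $\O(\aa^2)$ error more explicit (and give an explicit $\aa_{11}=\max\{\aa_0,\sqrt{8C_1}\}$), which is a presentational rather than substantive difference.
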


\begin{proof}
	Since $x_s$ is a saddle, we have that $i_U(x_s) \geq 1$.
	Suppose that $\bar{x}_s$ is the critical point of $\bar{U}(x)$ in the $1/\aa$-neighborhodd of $x_s$. We write $x_s = \bar{x}_s + \dl_s$ when $\aa > \aa_0$. Suppose $i_U(x_s) = j$. So $i_{\bar{U}}(\bar{x}_s) = 1$. Then $U(x_s)$ is given by
	\begin{align}
		\lb{eqn:U.x.s}
		U(x_s)  =&\sum_{i = 1}^n\frac{1}{4}(\bar{x}_{s,i} +\dl_{s,i})^4 +\sum_{i = 1}^n \frac{\bb-\aa^2}{2}(\bar{x}_{s,i} +\dl_{s,i})^2 \\
		& \quad -\frac{1}{2}(\bar{x}_{s} +\dl_{s})^TS(\bar{x}_{s} +\dl_{s}) \nn\\
		=& -\frac{(n-j)\aa^4}{4} + \O(\aa^2),  \nn
	\end{align}
	where the last equality holds because $|\bar{x}_{s,i}| = \aa$ and $\dl_{s,i} \sim \O(1/\aa)$.
	Then there exists $\aa_{11} > \aa_0$ such that if $U(x_{s})= U_{s}$, then $i_{U}(x_s) = 1$.
\end{proof}

\begin{proposition}\lb{prop:Us>um}
	There exists $\aa_{12} > \aa_0$ such that $U_s > U_M$ when $\aa > \aa_{12}$.
\end{proposition}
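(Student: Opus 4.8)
The plan is to read off the leading $\aa$\nobreakdash-asymptotics of $U$ at the two families of critical points that enter the definitions in \eqref{eqn:Us} and to exploit a gap in the coefficient of $\aa^4$: this coefficient equals $-n/4$ at \emph{every} local minimum point but only $-(n-1)/4$ at the lowest (index\nobreakdash-one) saddle, so $U_s-U_M$ is $\tfrac14\aa^4$ up to lower order, hence eventually positive.

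First I would estimate $U_M$. For $\aa>\aa_0$, item~\ref{coro:it.i} of Corollary~\ref{coro:minimum} writes every local minimum point as $x=\bar x+\dl$ with $\bar x\in\A^n$ (so $\bar x_i^2=\aa^2$ and $\bar x_i^4=\aa^4$ for every $i$) and $\dl_i\sim\O(1/\aa)$. Substituting into \eqref{eqn:pote} and expanding, the ``pure'' part $\sum_i\bigl(\tfrac14\bar x_i^4+\tfrac{\bb-\aa^2}{2}\bar x_i^2\bigr)$ equals $-\tfrac{n}{4}\aa^4+\tfrac{n\bb}{2}\aa^2$, while every term containing a factor of $\dl$, together with the entire quadratic form $-\tfrac12 x^T S x$, is bounded in absolute value by a constant times $\aa^2$, the constant depending only on $n$, $\bb$, $S$ and the $B_2$ of Proposition~\ref{thm:ep.net}. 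Since there are only $2^n$ local minima, this gives $U_M=-\tfrac{n}{4}\aa^4+\O(\aa^2)$ uniformly.

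Next I would estimate $U_s$. The computation leading to \eqref{eqn:U.x.s} applies verbatim to an arbitrary saddle $x_s=\bar x_s+\dl_s$ (the only inputs are $\bar x_{s,i}\in\{0,\pm\aa\}$ and $\dl_{s,i}\sim\O(1/\aa)$), yielding $U(x_s)=-\tfrac{n-i_U(x_s)}{4}\aa^4+\O(\aa^2)$ with a uniform remainder. Since every $x_s\in\cC_s(U)$ has $i_U(x_s)\ge1$, this furnishes the uniform lower bound $U(x_s)\ge-\tfrac{n-1}{4}\aa^4-C\aa^2$ for $\aa$ large, hence $U_s\ge-\tfrac{n-1}{4}\aa^4-C\aa^2$ (alternatively one may invoke the lemma preceding this proposition, which identifies the minimizing saddle as having Morse index $1$, and conclude $U_s=-\tfrac{n-1}{4}\aa^4+\O(\aa^2)$). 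Subtracting the two estimates gives $U_s-U_M=\tfrac14\aa^4+\O(\aa^2)$, and since the leading coefficient $\tfrac14$ is strictly positive there is $\aa_{12}>\aa_{11}$ (in particular $\aa_{12}>\aa_0$) with $U_s-U_M>0$ for all $\aa>\aa_{12}$.

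The only point requiring genuine care is the uniformity of the $\O(\aa^2)$ remainders, that is, that a single constant works for all critical points and all sufficiently large $\aa$. This is bookkeeping rather than an obstacle: the perturbation estimate $|x-\bar x|<B_2/\aa$ of Proposition~\ref{thm:ep.net} is uniform and at most $3^n$ critical points are involved, so no idea beyond the $\aa^4$\nobreakdash-coefficient gap is needed.
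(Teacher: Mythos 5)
Your proposal is correct and follows essentially the same route as the paper: decompose each critical point as $x=\bar x+\dl$ with $|\dl|\sim\O(1/\aa)$, read off the leading $\aa^4$ coefficient ($-\tfrac{n}{4}$ at minima versus $-\tfrac{n-j}{4}$ at an index-$j$ saddle), and conclude $U_s-U_M=\tfrac14\aa^4+\O(\aa^2)>0$ for large $\aa$. The only cosmetic difference is that you bound $U_s$ from below using $i_U(x_s)\ge 1$ for all saddles, whereas the paper invokes its preceding lemma identifying the minimizing saddle as having Morse index $1$ — an alternative you also note.
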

\begin{proof}
	For any given $\aa$, suppose that  $x_{m}$ satisfies that $U(x_{m})= U_{M}$ and correspondingly $\bar{x}_{m}$ is the minimum point of $\bar{U}(x)$ with $x_{m} = \bar{x}_{m} +\dl_{m}$. Suppose that $x_{s}$ satisfies $U(x_{s})= U_{s}$ and $\bar{x}_{s}$ is the saddle of $\bar{U}(x)$ with $x_{s} = \bar{x}_{s} +\dl_{s}$.
	Therefore $U(x_s) = -\frac{(n-1)\aa^4}{4} + \O(\aa^2)$ by \eqref{eqn:U.x.s}.
	Note that  $U(x_m)$ satisfies
	\begin{align}
		U(x_m) =&  \sum_{i = 1}^n\frac{1}{4}(\bar{x}_{m,i} +\dl_{m,i})^4 + \frac{\bb-\aa^2}{2}(\bar{x}_{m,i} +\dl_{m_i})^2 \\
		& \quad -\frac{1}{2}(\bar{x}_{m} +\dl_{m})^TS(\bar{x}_{m} +\dl_{m}) \nn \\
		=&	-\frac{n\aa^4}{4} + \O(\aa^2),	\nn
	\end{align}
	where last equality holds because $|\bar{x}_{m,i}| = \aa$ and $\dl_{m,i} \sim \O(1/\aa)$.
	By \eqref{eqn:U.x.s}, it follows that
	\begin{align}
		U_s-U_M = U(x_s)-	U(x_m) =& \frac{\aa^4}{4} + \O(\aa^2).
	\end{align}
	There exists $\aa_{12}$ with $ \aa_{12} > \aa_0$ such that $U_s-U_M  > 0$.
\end{proof}

Let $N := 2^n$. For any given Ising model $E(v)$, we assume that
\begin{align}
	E(v_1)= E(v_2)\leq E(v_3) =E(v_4) \leq \cdots \leq E(v_{N-1}) =  E(v_N),
\end{align}
where $v_i\in C(E)=\{-1,1\}^n$ and $v_{2i-1} = -v_{2i}$. Denote that
\begin{align}
	d_i = E(v_{2i}) - E(v_{2i-2}) \geq 0, \quad \; \text{where}\; i \in \{1, \dots, N/2\}. \lb{eqn:di}
\end{align}
We can label the local minimum points of $\bar U(x)$ by $\bar x_i$ with $\bar x_i = \aa v_i$. Since $|\bar x_i| = |\bar x_j|$ for any $\bar x_i, \bar x_j\in \cC_0(\bar U)$, we have that $\bar{U}(\bar x_i) = \bar{U}(\bar x_{j})$. For $i\in \{2,\dots, N/2\}$, we have that
\begin{align}
	U(\bar x_{2i}) - U(\bar x_{2i-2}) = & \l(\bar{U}(\bar x_{2i}) + \frac{\bb}{2} |\bar x_{2i}|^2 -\frac{1}{2}\bar x_{2i}^T S \bar x_{2i} \r) \\
	&\quad - \l(\bar{U}(\bar x_{2i-2}) + \frac{\bb}{2} |\bar x_{2i-2}|^2 - \frac{1}{2}\bar x_{2i-2}^T S \bar x_{2i-2} \r) \nn \\
	=& \frac{1}{2}\l(\bar x_{2i-2} S \bar x_{2i-2}^T - \bar x_{2i} S \bar x_{2i}^T\r) \nn \\
	=& \aa^2 d_i, \nn
\end{align}
where the last equality holds by $\bar x_{2i} = \aa v_{2i}$. It follows that
\begin{align}
	U(\bar x_1)= U(\bar x_2) \leq U(\bar x_3)= U(\bar x_4) \leq \cdots \leq U(\bar x_{N-1}) = U(\bar x_N). \lb{eqn:U.seq.bar.x}
\end{align}
By Lemma \ref{thm:ep.net}, we also label the local minimum points of $U(x)$ by $x_i$ for $1\leq i \leq N$ such that $x_i$ satisfies
\begin{align}
	|x_i - \bar x_i| < B/\aa, \lb{eqn:lb}
\end{align}
when $\aa > \aa_{12}$.
\begin{lemma}\lb{lem:MI}
	Given $x_i \in \cC_0(U)$, there exist $M_i>0$ and $A_{i} > \aa_{12}$ such that when $\aa > A_i$,
	\begin{align}
		|U(x_i) - U(\bar x_i)| < M_i. \lb{eqn:M_i}
	\end{align}
\end{lemma}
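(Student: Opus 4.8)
The plan is to combine Corollary~\ref{coro:minimum} with the labelling \eqref{eqn:lb}. For $\aa>\aa_{12}$ we may write $x_i=\bar x_i+\dl_i$ with $\bar x_i\in\A^n$ — so $\bar x_{i,j}^2=\aa^2$ and hence $\bar x_{i,j}^3=\aa^2\bar x_{i,j}$ for every coordinate $j$ — and $|\dl_{i,j}|\le|x_i-\bar x_i|<B/\aa$ for all $j$. I would then estimate $U(x_i)-U(\bar x_i)$ by expanding $U$ around $\bar x_i$ term by term; the whole point is that the potentially dangerous contributions of order $\aa^4$ and $\aa^3$ disappear, leaving a quantity that stays $\O(1)$ as $\aa\to\infty$.

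Concretely, for each coordinate, using $\bar x_{i,j}^3=\aa^2\bar x_{i,j}$ one gets
\[
\tfrac14\bigl(x_{i,j}^4-\bar x_{i,j}^4\bigr)+\tfrac{\bb-\aa^2}{2}\bigl(x_{i,j}^2-\bar x_{i,j}^2\bigr)
=\bb\,\bar x_{i,j}\dl_{i,j}+\bigl(\aa^2+\tfrac{\bb}{2}\bigr)\dl_{i,j}^2+\bar x_{i,j}\dl_{i,j}^3+\tfrac14\dl_{i,j}^4 ,
\]
where the term $\aa^2\bar x_{i,j}\dl_{i,j}$ produced by $\bar x_{i,j}^3\dl_{i,j}$ cancels exactly the term $-\aa^2\bar x_{i,j}\dl_{i,j}$ produced by $\tfrac{\bb-\aa^2}{2}\cdot 2\bar x_{i,j}\dl_{i,j}$; this cancellation is forced by the specific coefficient $\tfrac{\bb-\aa^2}{2}$ in the definition \eqref{eqn:pote} of $U$. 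Since $|\bar x_{i,j}|=\aa$ and $|\dl_{i,j}|<B/\aa$, each of the four terms on the right is bounded by a constant depending only on $\bb$ and $B$ (for instance $\bigl(\aa^2+\tfrac{\bb}{2}\bigr)\dl_{i,j}^2<B^2+\tfrac{\bb B^2}{2}$ once $\aa\ge1$), uniformly in $\aa>\aa_{12}$. For the quadratic part, using the symmetry of $S$,
\[
\tfrac12\bigl(x_i^T S x_i-\bar x_i^T S\bar x_i\bigr)=\bar x_i^T S\dl_i+\tfrac12\dl_i^T S\dl_i ,
\]
which is bounded by means of $|\bar x_i|=\aa\sqrt{n}$, $|\dl_i|<B\sqrt{n}/\aa$ and the operator norm of $S$. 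Summing the $n$ coordinate contributions and adding the quadratic part gives $|U(x_i)-U(\bar x_i)|<M_i$ for an explicit constant $M_i=M_i(n,\bb,S,B)$ and all $\aa>\aa_{12}$; in particular any $A_i>\aa_{12}$ works.

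There is no genuine analytic difficulty here: Proposition~\ref{thm:ep.net} and Corollary~\ref{coro:minimum} already supply the decisive input $|x_i-\bar x_i|=\O(1/\aa)$. The only point requiring care is the algebraic bookkeeping in the expansion — one must check that there is no $\aa^4$ term (there is none, since $x_{i,j}^4-\bar x_{i,j}^4$ already carries a factor $\dl_{i,j}$) and that the $\aa^3$ terms cancel (they do, by $\bar x_{i,j}^3=\aa^2\bar x_{i,j}$ together with the coefficient matching above). Once that is settled, every surviving monomial carries enough powers of $\dl_{i,j}$ that the bound $|\dl_{i,j}|<B/\aa$ makes it $\O(1)$. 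Incidentally the constant $M_i$ obtained this way does not actually depend on $i$, but since the statement only asks for an $i$-dependent bound there is no need to track this.
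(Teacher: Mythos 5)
Your proposal is correct and follows essentially the same route as the paper: write $x_i=\bar x_i+\dl_i$ via Corollary~\ref{coro:minimum}, expand $U$ around $\bar x_i$, use $\bar x_{i,j}^2=\aa^2$ to cancel the $\aa^2\bar x_{i,j}\dl_{i,j}$ contributions (the paper's displayed second equality is precisely your coordinatewise identity), and then bound each surviving term by $|\bar x_{i,j}|=\aa$ and $|\dl_{i,j}|=\O(1/\aa)$. Your additional remark that the resulting bound is uniform in $i$ is accurate but, as you note, not needed for the statement.
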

\begin{proof}
	By Corollary \ref{coro:minimum}, we have that $x_i= \bar x_i +\dl_i$ where $x_i=(x_{i,1}, \dots, x_{i,n})^T$, $\bar x_i=(\bar x_{i,1}, \dots, \bar x_{i,n})^T \in \cC_0(\bar{U})$, and $\dl_i =(\dl_{i,1}, \dots, \dl_{i,n})^T$. It follows that
	\begin{align}
		U(x_i) - U(\bar{x}_i) =& \sum_{j = 1}^{n}  (\bar{x}_{i,j}^3\dl_{i,j} +\frac{3}{2}\bar{x}_{i,j}^2\dl_{i,j}^2+\bar{x}_{i,j}\dl_{i,j}^3 + \frac{1}{4}\dl_{i,j}^4) \\
		&\quad + \sum_{j = 1}^{n} \frac{\bb-\aa^2}{2}(2\bar{x}_{i,j}\dl_{i,j}+\dl_{i,j}^2) - \dl^TS \bar{x}-\frac{1}{2}\dl^TS \dl \nn \\
		=&\sum_{j = 1}^{n} (\aa^2\dl_{i,j}^2 + \bb\dl_{i,j}\bar{x}_{i,j}+ \bar{x}_{i,j}\dl_{i,j}^3+\frac{1}{4}\dl_{i,j}^4+\frac{\bb}{2}\dl_{i,j}^2) - \dl^TS \bar{x}-\frac{1}{2}\dl^TS \dl \nn,
	\end{align}
	where the second equality holds by $|\bar{x}_{i,j}| = \aa$. For each $j$, the terms $\aa^2\dl_{i,j}^2$, $\bb\dl_{i,j}\bar{x}_{i,j}$ and $\dl^T S \bar{x}$ are all bounded because $|\bar x_{i,j}| = \aa$ and $|\dl_{i,j}| <B_2/\aa$. The terms $\bar{x}_{i,j}\dl_{i,j}^3$, $\frac{1}{4}\dl_{i,j}^4$, $\frac{\bb}{2}\dl^2$ and $\frac{1}{2}\dl^TS \dl$ tend to zero as $\aa$ tends to infinity. Therefore, there exist $A_i >\aa_{12}$ and $M_i > 0$ such that for all $\aa \geq A_i$,
	\begin{align}
		|U(x_i) - U(\bar{x}_i) | < M_i.
	\end{align}
	The proof is complete.
\end{proof}

Note that $d_i$ in \eqref{eqn:di}, $A_i$ in Lemma \ref{lem:MI}, and $M_i$ in \eqref{eqn:M_i} only depend on $\bb$ and $S$.
Let $d_{\min}:= \min\{d_i|d_i \neq 0, 1\leq i\leq N/2\}$, $A_{\max}:= \max\{A_i|1\leq i \leq N\}$, and  $M:=\max \{M_i| 1\leq i \leq N\}$.
We prove Theorem \ref{thm:main} as follows.

\begin{proof}[Proof of Theorem \ref{thm:main}]
	Suppose $x_0$ is a global minimum point of $U(x)$. Let $v_0 = \sgn (x_0)\in C(E)$.
	By Lemma \ref{lem:MI}, when $\aa > A_{\max}$,
	\begin{align}
		|U(x_i) - U(\bar x_i)| < M, \quad \forall\; 1\leq i\leq N. \lb{eqn.M1}
	\end{align}
	Let
	\begin{align}
		\aa_* := \max\l\{A_{\max}, \sqrt{\frac{3M}{d_{min}}}\r\}.
	\end{align}
	Note that $d_{\min}\neq 0$. Assume by contradiction that there is $v' \in C(E)$ such that $E(v') < E(v_0)$. Then
	\begin{align}
		E(v_0) -E(v') > d_{\min} >0.
	\end{align}
	Let $\bar x_0 = \aa v_0$ and $\bar x' = \aa v'$.
	When $\aa> \aa_*$, we have that
	\begin{align}
		U(\bar x_{0})- U(\bar x')  > \aa_*^2 d_{\min} \geq 3M.
	\end{align}
	Together with \eqref{eqn.M1}, we have that $|U(x_0) - U(\bar x_0)| < M$ and $|U(x') - U(\bar x')| < M$. It follows that when $\aa > \aa_*$,
	\begin{align}
		U(x_0) - U(x') > M >0.
	\end{align}
	which contradicts that $x_0$ is the global minimum point of $U(x)$. Then $v_0$ is a minimizer of $E(v)$.
	The proof is complete.
\end{proof}

If we further assume that the Ising model satisfies
\begin{align}
	E(v_1)= E(v_2) < E(v_3) =E(v_4) < \cdots < E(v_{N-1}) =  E(v_N),
\end{align}
then following a similar argument as \eqref{eqn:U.seq.bar.x}, we obtain that $d_i \neq 0$ for $1\leq i\leq N/2$, and
\begin{align}
	U(\bar x_1)= U(\bar x_2) < U(\bar x_3) =U(\bar x_4) < \cdots < U(\bar x_{N-1}) =  U(\bar x_N). \nn
\end{align}
As \eqref{eqn:lb}, we also label local minimum points of $U(x)$ as $x_1, x_2, \dots, x_n$ satisfying \eqref{eqn:lb}. Without losing the generality, we assume that $U(x_{2i-1}) \leq U(x_{2i})$ for all $1 \leq i \leq N/2$ by Lemma \ref{lem:MI}.
Following a similar argument as the proof of Theorem \ref{thm:main}, when $\aa > \aa_*'$, there exists $\aa_*'$ such that $U(\bar x_{2i}) - U(\bar x_{2i-2}) > 6M$ for $2\leq i\leq N/2$. Because $|U(\bar x_{i})- U(x_{i})|< M$ and $U(x_{2i-1}) \leq U(x_{2i})$ for all $1 \leq i \leq N/2$, we have that following corollary holds. We omit the detailed proof.
\begin{corollary}
	Suppose that the Ising model $E$ satisfies
	\begin{align}
		E(v_1)= E(v_2) < E(v_3) =E(v_4) < \cdots < E(v_{N-1}) =  E(v_N), \nn
	\end{align}
	where $v_i\in C(E)$ and $v_{2i-1} = -v_{2i}$. There exists $\aa_*'> 0$ such that when $\aa > \aa_*'$, the minimum points of $U(x)$ satisfy that
	\begin{align}
		U(x_1) \leq  U(x_2) < U(x_3) \leq U(x_4) < \cdots < U(x_{N-1}) \leq   U(x_N), \nn
	\end{align}
	where $x_i \in \cC_0(U)$ and
	\begin{align}
		\{\sgn(x_{2i-1}), \sgn(x_{2i})\} = \{v_{2i-1}, v_{2i}\}, \quad\; \forall i \in \{1,\dots, N/2\}. \nn
	\end{align}
\end{corollary}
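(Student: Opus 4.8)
The plan is to repeat the argument of Theorem \ref{thm:main}, this time tracking the whole sorted list of values of $U$ on its local minimum set rather than only the smallest one. First I would take $\aa$ larger than all the relevant thresholds (the $\aa_0$ of Proposition \ref{prop:3n.cp}, the $A_{\max}$ of Lemma \ref{lem:MI}, and the thresholds of Proposition \ref{thm:ep.net} and Corollary \ref{coro:minimum}). Then $U(x)$ has exactly $N=2^n$ local minimum points; by Proposition \ref{thm:ep.net} each $\bar x\in\cC_0(\bar U)$ has a unique nearby local minimum of $U$ (within $\O(1/\aa)$, cf. \eqref{eqn:lb}), and by \ref{coro:it.iii} of Corollary \ref{coro:minimum} that minimum has signum vector $\sgn(\bar x)$. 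Keeping the labeling conventions of \eqref{eqn:di}--\eqref{eqn:lb}, put $\bar x_i=\aa v_i$ and $d_i:=E(v_{2i})-E(v_{2i-2})$ for $2\leq i\leq N/2$; the computation preceding \eqref{eqn:U.seq.bar.x} gives $U(\bar x_{2i})-U(\bar x_{2i-2})=\aa^2 d_i$, and the strict hypothesis on $E$ forces $d_i>0$ for every such $i$. Hence
\[
U(\bar x_1)=U(\bar x_2)<U(\bar x_3)=U(\bar x_4)<\cdots<U(\bar x_{N-1})=U(\bar x_N),
\]
with each consecutive gap at least $\aa^2 d_{\min}$, where $d_{\min}:=\min_{2\leq i\leq N/2}d_i>0$.

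Next I would transfer this strict ordering to the genuine minima $x_i$ of $U$, labeled as in \eqref{eqn:lb}. Within each antipodal pair $\{x_{2i-1},x_{2i}\}$, which by construction corresponds to $\{v_{2i-1},v_{2i}\}=\{v,-v\}$, I would relabel if necessary so that $U(x_{2i-1})\leq U(x_{2i})$; this leaves the set $\{\sgn(x_{2i-1}),\sgn(x_{2i})\}=\{v_{2i-1},v_{2i}\}$ unchanged, once more by \ref{coro:it.iii} of Corollary \ref{coro:minimum}. By Lemma \ref{lem:MI} there is a constant $M$ depending only on $\bb$ and $S$ (not on $\aa$) and a threshold $A_{\max}$ such that $|U(x_i)-U(\bar x_i)|<M$ for all $1\leq i\leq N$ whenever $\aa>A_{\max}$.

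Finally I would set $\aa_*':=\max\{A_{\max},\sqrt{6M/d_{\min}}\}$. For $\aa>\aa_*'$ every $\bar U$-gap exceeds $6M$, so for $2\leq i\leq N/2$, using $U(\bar x_{2i-1})=U(\bar x_{2i})$,
\[
U(x_{2i-1})-U(x_{2i-2})>\bigl(U(\bar x_{2i})-M\bigr)-\bigl(U(\bar x_{2i-2})+M\bigr)=\aa^2 d_i-2M>0;
\]
combined with the within-pair relation $U(x_{2i-1})\leq U(x_{2i})$, this produces the full chain
\[
U(x_1)\leq U(x_2)<U(x_3)\leq U(x_4)<\cdots<U(x_{N-1})\leq U(x_N).
\]

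There is essentially no new analytic content here: everything rests on Proposition \ref{prop:3n.cp}, Corollary \ref{coro:minimum} and Lemma \ref{lem:MI}. The only point that needs care is the order of quantifiers --- Lemma \ref{lem:MI} supplies an $\aa$-independent error bound $M$, whereas the gaps between the $U(\bar x_i)$ grow like $\aa^2$, so for all large $\aa$ the gaps dominate the perturbation. Keeping the enumeration of \eqref{eqn:di}--\eqref{eqn:lb} fixed throughout ensures that the antipodal pairing, the numbers $d_i$, and the monotone chain of $\bar U$-values all refer to the same labeling, so no further bookkeeping is required.
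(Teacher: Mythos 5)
Your argument is correct and follows essentially the same route the paper sketches: the identity $U(\bar x_{2i})-U(\bar x_{2i-2})=\aa^2 d_i$ with $d_i>0$ by strictness, the $\aa$-independent bound $M$ from Lemma \ref{lem:MI}, a threshold making each gap exceed the perturbation (the paper uses $6M$, exactly your $\aa_*'=\max\{A_{\max},\sqrt{6M/d_{\min}}\}$), and the within-pair relabeling $U(x_{2i-1})\leq U(x_{2i})$ which preserves $\{\sgn(x_{2i-1}),\sgn(x_{2i})\}=\{v_{2i-1},v_{2i}\}$ via Corollary \ref{coro:minimum}. No substantive difference from the proof the paper omits.
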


\subsection{Examples of Ising model in $\R^2$ and $\R^3$}\lb{sec:opt.app}
When $n = 2$, the Ising energy can be reduced to $E = -\frac{1}{2}v^T Sv$
with $S= S_2 = (\begin{smallmatrix}
	0 & 1 \\1 & 0
\end{smallmatrix})$ whose eigenvalues are given by $\sg(S) = \{-1, 1\}$ and $v\in \{-1,1\}^2$.
Assume that $\bb > 1$.
The critical points of $U(x)$ in \eqref{eqn:pote} are given by the solutions of
\begin{align}
	\nabla U = \begin{pmatrix}
		x_1^2 +(\bb-\aa^2) & -1\\
		-1 & x_2^2 +(\bb-\aa^2)
	\end{pmatrix}
	\begin{pmatrix}
		x_1\\
		x_2
	\end{pmatrix}
	= 0.
\end{align}
For sake of simplicity, we define
\begin{align}
	\lm_1 =& \sqrt{\aa^2-\bb + 1},\lb{eqn:r2.lm1}\\
	\lm_2 =& \sqrt{\aa^2-\bb - 1},\lb{eqn:r2.lm2}\\
	\lm_3 =& \sqrt{(\aa^2 - \bb + \sqrt{(\aa^2 - \bb)^2-4})/2}, \lb{eqn:r2.lm3}\\
	\lm_4 =& \sqrt{(\aa^2 - \bb - \sqrt{(\aa^2 - \bb)^2-4})/2}.\lb{eqn:r2.lm4}
\end{align}
The number of critical points of $U(x)$ depends on $\aa$ and the bifurcation points of $\aa$ are $\sqrt{\bb-1}$, $\sqrt{\bb+1}$, and $\sqrt{\bb+2}$. Namely, the number of critical points and the local properties of the critical points change at those points which are given in Table \ref{tab:1} and shown in Figure \ref{fig:contourplot}.
In this case, when $\aa \geq \aa_* = \sqrt{\bb+2}$,
both $(-1,-1)$ and $(1,1)$ minimize Ising model;  $ (\lm_1, \lm_1)$ and $(-\lm_1, -\lm_1)$ minimize $U(x)$ in $\R^2$.

When $n \geq 3$, it is very involved to solve the critical points of $U(x)$ for any symmetric matrix $S$. without loss of generality, we take $S = S_3 = \l(\begin{smallmatrix}
	0 & 1 & -2\\
	1 & 0 & 3 \\
	-2 & 3 & 0
\end{smallmatrix}\r)$ as an example and suppose $\bb =10$. When $\aa = 0$, $U(x)$ possesses only one local minimum which is $(0,0,0)$. It follows that $v = (-1,1,1)$ or $(1, -1,-1)$ minimizes the Ising energy $E = -v_1v_2+2v_1v_3 - 3 v_2v_3$ with $v \in \{-1,1\}^3$. When $\aa > \sqrt{21.3} \approx 4.6$, $U(x)$ possesses $27$ critical points in $\R^3$. Therefore, one can choose that $\aa_* = 5$ in this case. Via the numerical computations, the global minimum points of $U(x)$ are $(-3.5, 3.7, 4.0)$ and $(3.5, -3.7, -4.0)$ whose signum vectors are $(-1,1,1)$ and $(1, -1,-1)$.

\begin{remark}
	One interesting phenomenon in above two examples is that the signum vectors of two local minimum points are the minimizers of Ising model when $\aa$ is between the first bifurcation point and the second bifurcation point in both cases of $S = S_2$ and $S=S_3$. When $S= S_2$, as Table \ref{tab:1}, when $\aa > \sqrt{\bb -1}$, the signum vectors of $(\lm_1,\lm_1)$ and $(-\lm_1,-\lm_1)$ give the minimizer of Ising in $\R^2$. When $S = S_3$ with $\bb -\aa^2= -3$, the numerical computations show $U(x)$ possesses only three critical points which are two local minimum points $(-0.19, 0.41,  0.50)$ and $(0.19, -0.41, -0.50)$, and one saddle $(0,0,0)$ when $\aa$ is between the first and the second bifurcation.  For $S =S_3$, the first two local minimum points correspond to the minimizer of Ising model via the numerical computations after the first bifurcation. It is open that whether this phenomenon exists for general Ising problems in $\R^n$.
\end{remark}

\setcounter{equation}{0}
\setcounter{figure}{0}
\section{Revisit some dynamical system algorithms for the Ising model}\lb{sec:app.mds}
In this section, we revisit some dynamical system algorithms for the Ising model. Using the mathematical mechanism founded in last section, we can understand the coherent Ising machines (CIM) in \cite{wang2013coherent}, the adiabatic Hamiltonian systems in \cite{yamamoto2017coherent},  the Kerr-nonlinear parametric oscillators (KPO) proposed in \cite{goto2019quantum} and simulation bifurcation (SB algorithm) proposed in \cite{goto2019combinatorial}.
In the following we use the original notations in their  papers.

\subsection{Coherent Ising machines}\lb{sec.cim}
To find minimizers of Ising model
\begin{align}
	\min _v \; E := -\frac{1}{2}v^T \Xi v, \lb{eqn:ising.xi}
\end{align}
where $v\in C(E) = \{-1,1\}^n$, $\Xi=(\xi_{ij})_{n\times n}$ is symmetric and $\xi_{ii} = 0$, a coherent Ising machine was proposed in (8) of \cite{wang2013coherent}  as
\begin{align}
	\begin{dcases}
		\dot{c}_{j}=\left(-1+p-\left(c_{j}^{2}+s_{j}^{2}\right)\right) c_{j}+\sum_{l=1, l \neq j}^{n} \xi_{j l} c_{l}, \\
		\dot{s}_{j}= \left(-1-p-\left(c_{j}^{2}+s_{j}^{2}\right)\right) s_{j}+\sum_{l=1, l \neq j}^{n} \xi_{j l} s_{l},
	\end{dcases}
	\lb{eqn:DOPO.sys1}
\end{align}
where $p > 1$ is a constant. If $(c, s)$ are classical solutions of \eqref{eqn:DOPO.sys1}, then $(c,s )\in C^2(\R, \R^{2n})$.
We define the function $U_{d} \in C^2(\R^{2n}, \R)$ as
\begin{align}
	U_{d}(c,s) &:= \sum_{j = 1}^n
	\l( \frac{1}{4}(c_{j}^{2}+s_{j}^{2})^2 -\frac{p}{2}(c_j^2-s_{j}^2) + \frac{1}{2}(c_j^2 +s_j^2)\r) - \frac{1}{2} c^T \Xi c - \frac{1}{2} s^T \Xi s.
\end{align}
Via direct computations, \eqref{eqn:DOPO.sys1} can be rewritten as
\begin{align}
	\begin{dcases}
		\dot{c}_{j}=-\frac{\pt U_d}{\pt c_j}, \\
		\dot{s}_{j}=-\frac{\pt U_d}{\pt s_j}.
	\end{dcases}
\end{align}
We further define the function $\td U_d := U_{d}(c,0)  \in C^2(\R^n, \R)$ as
\begin{align}
	\td U_d(c) := \sum_{i = 1}^n \l(\frac{1}{4}c_{i}^4 +\frac{1-p}{2}c_i^2\r) - \frac{1}{2}c^T\Xi c.
\end{align}
Denote  the set of critical points of $U_{d}(c,s)$ and the set of critical points of $\td U_{d}(c)$ as $\cC(U_d)$ and $\cC(\td U_d)$ respectively. Furthermore, define the projection map $\pi_d$ as
\begin{align}
	\pi_d: \; \cC(U_d) \to \cC(\td U_d), \quad	(c,s) \mapsto c.
\end{align}
Suppose that $\lm_{\Xi}$ is the largest eigenvalue of $\Xi$.

\begin{lemma}\lb{lem:DOPO}
	When $p >\lm_{\Xi}$, if $(c,s) \in \cC(U_d)$, then $s = 0$. Moreover, the map $\pi_d$ is well-defined and bijective and $i_{U_d}((c,0)) = i_{\td U_d}(c)$.
\end{lemma}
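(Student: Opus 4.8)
The plan is to analyze the critical point equations of $U_d(c,s)$ directly and show that the $s$-block must vanish when $p > \lm_\Xi$. Setting $\nabla U_d = 0$, the $s_j$-equations read
\begin{align}
	\left(s_j^2 + c_j^2 + 1 + p\right) s_j - \sum_{l} \xi_{jl} s_l = 0, \nn
\end{align}
which in vector form says $\left(\diag\{|w_j|^2\} + (1+p) I_n - \Xi\right) s = 0$ where $w_j^2 = c_j^2 + s_j^2 \geq 0$. First I would argue that the matrix $M := \diag\{|w_j|^2\} + (1+p) I_n - \Xi$ is positive definite: since every $|w_j|^2 \geq 0$, we have $v^T M v \geq (1+p) |v|^2 - v^T \Xi v \geq (1 + p - \lm_\Xi)|v|^2 > 0$ for $v \neq 0$, using $p > \lm_\Xi$ and the symmetry of $\Xi$ (so $v^T \Xi v \leq \lm_\Xi |v|^2$). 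Hence $M s = 0$ forces $s = 0$. This is the crux of the ``$s=0$'' claim and is essentially a one-line spectral estimate; the only mild subtlety is that $M$ depends on $c$ through the diagonal, but the bound is uniform in $c$.

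Once $s = 0$ is established, the remaining critical point equations for $c$ become exactly $\nabla \td U_d(c) = 0$, so $\pi_d$ maps $\cC(U_d)$ into $\cC(\td U_d)$; conversely any $c \in \cC(\td U_d)$ gives $(c,0) \in \cC(U_d)$ by reversing the computation, and since the first component of $(c,0)$ is $c$, the map $\pi_d$ is a bijection with inverse $c \mapsto (c,0)$. This part is routine.

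For the Morse index identity, I would compute the Hessian $D^2 U_d$ at a critical point $(c,0)$. Because $s = 0$, the mixed second derivatives $\partial^2 U_d / \partial c_i \partial s_j$ vanish (each term coupling $c$ and $s$ in $U_d$ is either $(c_j^2+s_j^2)^2$ or $\tfrac12(c_j^2+s_j^2)$, whose cross partials carry a factor $s_j$, or the decoupled quadratic forms in $c$ and $s$ separately), so $D^2 U_d(c,0)$ is block diagonal: the $c$-block is precisely $D^2 \td U_d(c)$, and the $s$-block is $\diag\{c_j^2\} + (1+p)I_n - \Xi = M|_{s=0}$, which we already showed is positive definite. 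A positive definite block contributes no negative eigenvalues, so $i_{U_d}((c,0))$ equals the number of negative eigenvalues of the $c$-block, namely $i_{\td U_d}(c)$.

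The main obstacle is not conceptual but bookkeeping: one must verify carefully that the cross-derivative block genuinely vanishes at $s=0$ and that the $s$-block is exactly $M$ with the diagonal $\diag\{c_j^2\}$ (rather than $\diag\{c_j^2 + 3s_j^2\}$ evaluated at $s=0$, which gives the same thing), so that the positive-definiteness argument from the first step applies verbatim. I expect the whole proof to be short.
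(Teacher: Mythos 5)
Your proof is correct and follows essentially the same route as the paper: show $s=0$ at any critical point using positive definiteness of $(1+p)I_n-\Xi$ (plus the nonnegative diagonal), reduce the $c$-equations to $\nabla \td U_d=0$ so that $\pi_d$ is a bijection with inverse $c\mapsto(c,0)$, and read off the Morse index from the block-diagonal Hessian whose $s$-block is positive definite. Your $s$-block $\diag\{c_j^2\}+(1+p)I_n-\Xi$ is in fact the accurate computation (the paper's display records the quartic contribution to this block as $O_n$ rather than $\diag\{c_j^2\}$), but the discrepancy is immaterial since both versions are positive definite and the index conclusion is unchanged.
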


\begin{proof}
	Note that $\nabla U_d = 0$ is equivalent to
	\begin{align}
		\begin{dcases}
			\left(1-p+\left(c_{j}^{2}+s_{j}^{2}\right)\right) c_{j} - \sum_{l=1, l \neq j}^{n} \xi_{j l} c_{l} =0,\\
			\left(1+p+\left(c_{j}^{2}+s_{j}^{2}\right)\right) s_{j} - \sum_{l=1, l \neq j}^{n} \xi_{j l} s_{l} = 0.
		\end{dcases}
	\end{align}
	Since $p> \lm_{\Xi}$, $\nabla U_d = 0$ holds only if $s_j = 0$. 	
	Therefore, $\nabla U_d = 0$ can be reduced to
	\begin{align}
		\left(1-p+c_{j}^{2}\right) c_{j}-\sum_{l=1, l \neq j}^{n} \xi_{j l} c_{l}=0,\;  1\leq j\leq n. \lb{eqn:kpo.red}
	\end{align}
	Note that \eqref{eqn:kpo.red} is equivalent to $\nabla \td U_d = 0$. It yields that  $(c,0) \in \cC(U_d)$ if and only if $c \in \cC(\td U_d)$. Therefore, the map $\pi_d$ is a bijection between $\cC(U_d)$ and $\cC(\td U_d)$.
	
	The Hessian of $U_d$ at the critical points $(c,0)$ is given by
	\begin{align}
		D^2 U_d(c,0) =& \diag\{3 C, O_n\} + \diag\l\{(1-p)I_n,(1+p)I_n\r\} - \diag\{\Xi, \Xi\}\\
		=& \diag\{3 C+(1-p)I_n -\Xi,(1+p)I_n - \Xi\}, \nn
	\end{align}
	where $C = \diag\{c_1^2, \dots, c_n^2\}$, $O_n$ is an $n\times n$ matrix with all elements are $0$ and $c = (c_1, \dots, c_n)^T$.
	By direct computations,
	\begin{align}
		D^2 \td U_d(c) = 3 C+(1-p)I_n -\Xi.
	\end{align}	
	Note that $(1+p)I_n - \Xi$ is positively definite by $p > \lm_{\Xi}$.
	It follows $i_{U_d}((c,0)) = i_{\td U_d}(c)$. Then this lemma follows.
\end{proof}

Via Theorem \ref{thm:main}, we have following result.

\begin{proposition}\lb{prop:cim}
	When $p>\max\{\aa_*^2, \lm_{\Xi}\}$, if $(c,0)$ is the global minimum point of $U_d$,  $\sgn(c)$  minimizes Ising model $E = -\frac{1}{2}v^T \Xi v$.
\end{proposition}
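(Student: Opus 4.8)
The plan is to deduce Proposition \ref{prop:cim} directly from Theorem \ref{thm:main} by recognizing $\td U_d$ as a copy of the potential $U$ from \eqref{eqn:pote} for a suitable choice of parameters, and by showing that a global minimum point of $U_d$ necessarily lies on the slice $\{s=0\}$, where it coincides with a global minimum point of $\td U_d$.

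First I would match parameters. The restricted potential $\td U_d(c)=\sum_{i=1}^n\bigl(\frac14 c_i^4+\frac{1-p}{2}c_i^2\bigr)-\frac12 c^T\Xi c$ is exactly the function $U$ of \eqref{eqn:pote} with the choices $\bb=1$, $\aa^2=p$ (so that $\bb-\aa^2=1-p$), and coupling matrix $S=\Xi$. Since $p>\lm_\Xi\geq 0$, these are admissible parameters for Theorem \ref{thm:main}: $\aa=\sqrt p>0$ and $\bb=1>0$. Let $\aa_*$ denote the threshold furnished by Theorem \ref{thm:main} for this $\bb$ and for $S=\Xi$; then the hypothesis $p>\aa_*^2$ is precisely $\aa=\sqrt p>\aa_*$.

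Next I would locate the global minimum of $U_d$. Since $U_d$ is coercive (the term $\frac14\sum_j(c_j^2+s_j^2)^2$ dominates), the global minimum is attained. A short computation gives, for every $(c,s)\in\R^{2n}$, $U_d(c,s)-\td U_d(c)=\sum_{j=1}^n\bigl(\frac12 c_j^2 s_j^2+\frac14 s_j^4\bigr)+\frac12 s^T\bigl((p+1)I_n-\Xi\bigr)s\geq\frac{p+1-\lm_\Xi}{2}|s|^2\geq 0$, where positivity of $(p+1)I_n-\Xi$ uses $p>\lm_\Xi$, and equality holds if and only if $s=0$. Hence if $(c,0)$ is a global minimum point of $U_d$, then for every $c'$ we have $\td U_d(c')=U_d(c',0)\geq U_d(c,0)=\td U_d(c)$, so $c$ is a global minimum point of $\td U_d$, i.e. of $U$ with the parameters fixed above. (This can also be read off from Lemma \ref{lem:DOPO}, since $\pi_d$ is a bijection on critical points that preserves the Morse index, hence carries the global minimum to the global minimum.)

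Finally, since $\aa=\sqrt p>\aa_*$, Theorem \ref{thm:main} applies to $U$ with $\bb=1$ and $S=\Xi$, and yields that $\sgn(c)$ is a minimizer of $E=-\frac12 v^T\Xi v$, which is the claim. The argument is essentially bookkeeping; the only step that needs genuine care is the reduction from $U_d$ on $\R^{2n}$ to $\td U_d$ on $\R^n$, i.e. making sure that passing to the slice $\{s=0\}$ neither creates nor destroys global minima. This is exactly where the hypothesis $p>\lm_\Xi$ (equivalently, positive definiteness of $(p+1)I_n-\Xi$) is used, just as in Lemma \ref{lem:DOPO}; everything else is a direct translation of parameters into Theorem \ref{thm:main}.
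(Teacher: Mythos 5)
Your proposal is correct and follows essentially the same route as the paper: restrict to the slice $\{s=0\}$ so that a global minimum point $(c,0)$ of $U_d$ gives a global minimum point $c$ of $\td U_d$, identify $\td U_d$ with $U$ in \eqref{eqn:pote} via $\aa=\sqrt{p}$, $\bb=1$, $S=\Xi$, and invoke Theorem \ref{thm:main}. Your explicit inequality $U_d(c,s)-\td U_d(c)\geq \frac{p+1-\lm_{\Xi}}{2}|s|^2$ is a self-contained (and slightly stronger) substitute for the paper's appeal to Lemma \ref{lem:DOPO}, but the argument is the same in substance.
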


\begin{proof}
	By Lemma \ref{lem:DOPO} and $U_d(c,0) = \td U_d(c)$, if $(c,0)$ minimize $U_d(c,s)$ globally, then $c$ minimizes $\td U_d$ globally when $p > \lm_{\max}$. If the parameters of $U(x)$ in \eqref{eqn:pote} satisfies $\aa = \sqrt{p}$, $\bb = 1$ and $S=\Xi$. The function $U = \td U_d $.
	Via Theorem \ref{thm:main}, it follows that if $(c,0)$ minimizes $U_d$ globally, the signum vector $\sgn(c)$ is a minimizer of Ising model \eqref{eqn:ising.xi} when $p>\max\{\aa_*^2, \lm_{\Xi}\}$.
\end{proof}

We consider another CIM which was proposed in \cite{yamamoto2017coherent}, as
\begin{align}
	\dot x = -\nabla U_c, \lb{eqn:cim}
\end{align}
where $U_c$ is given by
\begin{align}
	U_c (x) = \sum_{i = 1}^n \frac{1}{4} x_i^4 +\frac{1-p}{2} x_i^2 -\ep x^T S_c x,
\end{align}
where $p > 0$ and $0<\ep \ll 1$ and $S_c =(s_{ij})_{n\times n}$ is symmetric with $s_{ii} =0$.
To apply Theorem \ref{thm:main}, let $\bb = 1$, $\aa = \sqrt{p}$, and $S = 2\ep S_c$.
The function $U_c$ is the same as $U(x)$ given by \eqref{eqn:pote}.
So minimizing $E = -\frac{1}{2} v^T S v$ is equvilient to minimize $E = -\frac{\ep}{2} v^T S_c v$. Then we apply Theorem \ref{thm:main} directly and obtain the following result.

\begin{proposition}\lb{prop:cim2}
	When $p>\aa_*^2$, if $x$ is a global minimum point of $U_c(x)$, then $\sgn(x)$ is a minimizer of Ising model $E = -\frac{1}{2}x^T S_c x$.
\end{proposition}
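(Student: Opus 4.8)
The plan is to obtain Proposition~\ref{prop:cim2} as an immediate consequence of Theorem~\ref{thm:main} via the change of parameters indicated just before the statement. Concretely, I would apply Theorem~\ref{thm:main} with the data $\bb = 1$, $S = 2\ep S_c$, and the parameter $\aa = \sqrt{p}$. Since $S_c$ is symmetric with vanishing diagonal, $S = 2\ep S_c$ is an admissible Ising coupling matrix in the sense of \eqref{eqn:ising}, and $\bb = 1 > 0$, $\aa > 0$, so the hypotheses of Theorem~\ref{thm:main} are in force. A term-by-term comparison of the definition \eqref{eqn:pote} of $U(x)$ with that of $U_c$ gives $\frac{\bb - \aa^2}{2} = \frac{1-p}{2}$ and $-\frac{1}{2} x^T S x = -\ep\, x^T S_c x$, so that $U(x) \equiv U_c(x)$ on $\R^n$; in particular $U$ and $U_c$ have the same set of global minimum points, and the apparatus of Section~\ref{sec:opt.set} (Proposition~\ref{thm:ep.net}, Proposition~\ref{prop:3n.cp}, Lemma~\ref{lem:MI}) applies verbatim to $U_c$.

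Next I would observe that the Ising energy attached to $S = 2\ep S_c$, namely $E(v) = -\frac{1}{2} v^T S v = -\ep\, v^T S_c v$, is a strictly positive multiple of $-\frac{1}{2} v^T S_c v$ because $\ep > 0$; hence on $C(E) = \{-1,1\}^n$ these two energies have exactly the same minimizers. Theorem~\ref{thm:main} then furnishes a threshold $\aa_* \geq \aa_0$, depending only on $\bb = 1$ and $S = 2\ep S_c$, such that for every $\aa > \aa_*$ the signum vector of any global minimum point of $U = U_c$ minimizes $E(v) = -\frac{1}{2} v^T S v$, and therefore also minimizes $-\frac{1}{2} v^T S_c v$. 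Rewriting the inequality $\aa > \aa_*$ as $p > \aa_*^2$ reproduces precisely the hypothesis of the proposition, and the conclusion follows.

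I do not anticipate any genuine analytic obstacle: the entire substance has already been discharged in Theorem~\ref{thm:main} and its supporting results. The only point that deserves explicit mention is the bookkeeping of constants — the quantity $\aa_*$ appearing in the proposition must be read as the threshold produced by Theorem~\ref{thm:main} for the \emph{rescaled} data $(\bb, S) = (1, 2\ep S_c)$, not for $S_c$ itself. It is also worth noting, for completeness, that the smallness assumption $0 < \ep \ll 1$ imported from \cite{yamamoto2017coherent} plays no role in this purely variational statement; only $\ep > 0$ is used. Finally, the degenerate case $S_c = 0$ is trivial, since then every $v \in \{-1,1\}^n$ minimizes the Ising energy, so the statement holds vacuously there as well.
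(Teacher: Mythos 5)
Your argument is correct and follows essentially the same route as the paper: the paper likewise sets $\bb=1$, $\aa=\sqrt{p}$, $S=2\ep S_c$, identifies $U_c$ with $U$ in \eqref{eqn:pote}, notes that the positive rescaling of the Ising energy does not change its minimizers, and then applies Theorem \ref{thm:main} directly. Your explicit remarks on reading $\aa_*$ as the threshold for the rescaled data $(1,2\ep S_c)$ and on the irrelevance of the smallness of $\ep$ merely make precise what the paper leaves implicit.
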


\begin{remark}
	From the mathematical point of view, CIM in \eqref{eqn:DOPO.sys1} and \eqref{eqn:cim} are both designed to minimize $U_d$ globally via the gradient descent flow. A global minimum point yields a minimizer of the Ising model.	
	
	Readers may refer \cite{bohm2019poor, inagaki2016coherent} for the large-scale of CIMs and  \cite{haribara2016computational} for the measurement-feedback technique on CIM. With the help of the Brownian motion in CIMs, these algorithms show their power on solving the large-scale  combinatorial problems.
\end{remark}

\subsection{Adiabatic Hamiltonian systems}\lb{sec:hamil}
Suppose the Ising model
\begin{align}\lb{eqn:Ising,kpo}
	\min_{v} \; E := -\frac{1}{2} v^T J v,
\end{align}
where $v\in C(E) = \{-1,1\}^n$ and $J = (J_{i,j})_{n \times n}$ is a symmetric matrix with $J_{i,i}=0$. One adiabatic Hamiltonian system called KPO was introduced in \cite{goto2019quantum} as
\begin{align}
	\begin{dcases}
		\dot{x}_{i}=\frac{\pt H_k}{\pt y_i}=\left(p(t)+\Delta+K\left(x_{i}^{2}+y_{i}^{2}\right)\right) y_{i}-\xi_{0} \sum_{j=1}^{n} J_{i, j} y_{j}, \\
		\dot{y}_{i}=-\frac{\pt H_k}{\pt x_i}=-\left(p(t)-\Delta-K\left(x_{i}^{2}+y_{i}^{2}\right)\right) x_{i}+\xi_{0} \sum_{j=1}^{n} J_{i, j} x_{j}.
	\end{dcases}
	\lb{eqn:KPO.1}
\end{align}
The corresponding Hamiltonian $H_k(x,y,t)$ is
\begin{align}
	H_{k}(x,y,t) = & \sum_{i=1}^{n}\left(\frac{K}{4}\left(x_{i}^{2}+y_{i}^{2}\right)^{2}-\frac{p(t)}{2}\left(x_{i}^{2}-y_{i}^{2}\right)+\frac{\Delta}{2}\left(x_{i}^{2}+y_{i}^{2}\right)\right) \\
	&\quad  -\frac{\xi_{0}}{2} \sum_{i=1}^{n} \sum_{j=1}^{n} J_{i, j}\left(x_{i} x_{j}+y_{i} y_{j}\right), \nn
\end{align}
where $K, \Dl, \xi_0 > 0$ are constants and $p(t)> 0$ is a function of $t$ with $\dot{p}(t) > 0$.

For the same model \eqref{eqn:Ising,kpo}, another adiabatic Hamiltonian system called SB algorithm was introduced in \cite{goto2019combinatorial} as
\begin{align}
	\begin{dcases}
		\dot{x}_{i}= \frac{\pt H_s}{\pt y_i}= \Delta y_{i}, \\
		\dot{y}_{i}=-\frac{\pt H_s}{\pt x_i}=-\left(Kx_{i}^{2} + \Dl -p(t)\right) x_{i}+\xi_{0} \sum_{j=1}^{n} J_{i, j} x_{j}.
	\end{dcases}
	\lb{eqn:SB.1}
\end{align}
The corresponding Hamiltonian is
\begin{align}
	H_{s}(x,y,t) = & \sum_{i=1}^{n} \frac{\Dl}{2}y_{i}^{2} + \sum_{i=1}^{n} \l( \frac{K}{4}x_{i}^{4}+\frac{\Delta-p(t)}{2}x_{i}^{2}\r) -\frac{\xi_{0}}{2} x^T Jx, \lb{eqn:hami.SB.ex}
\end{align}
where $K, \Dl, \xi_0 > 0$ are constants, and $p(t) > 0$ is a function with $\dot{p}(t)> 0$.

The critical points of $H_k(x,y,t)$ and $H_s(x,y,t)$ are given by $\nabla H_k =0$ and $\nabla H_s =0$ respectively. However, the critical points of $H_k(x,y,t)$ and $H_s(x,y,t)$ are not the solutions \eqref{eqn:KPO.1} or \eqref{eqn:SB.1} because $\dot{p} \neq 0$. In the following, we take $p$ as a parameter to discussion the correspondence between the global minimum point of $H_k(x,y,t)$ and $H_s(x,y,t)$ with the minimizer of the Ising model \eqref{eqn:Ising,kpo}.
In Section \ref{sec:tran.cap}, we will take $p$ as a function of $t$ and study the condition on convergence of the SB algorithm.

As in Section \ref{sec.cim}, we introduce the function $U_h(x)$ which is the potential of $H_s$ of as
\begin{align}
	U_{h}(x) = \sum_{i=i}^n \frac{K}{4} x_i^4 +\l(\frac{\Dl-p}{2}\r)x_i^2 -\frac{\xi_0}{2}x^TJx.
\end{align}
Define the project maps $\pi_k: C(H_k)\to \cC(U_h), (x,y) \mapsto x$ and $\pi_s: C(H_s)\to \cC(U_h), (x,y) \mapsto x$.
We first discuss the correspondence between the critical points $\cC(H_k)$ (resp. $\cC(H_s)$) of $H_k$ (resp. $H_s$) and $\cC(U_h)$.

\begin{lemma}\lb{lem:kpo.sb}
	When $p > \lm'_{\max}$ where $\lm'_{\max}$ is the largest eigenvalue of $J$, if $(x,y) \in \cC(H_k)$, then $y = 0$. Furthermore,
	$\pi_k$ is a  well-defined bijection and $i_{H_k}((x,y)) = i_{U_h}(x)$.
\end{lemma}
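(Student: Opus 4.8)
The plan is to mirror, almost verbatim, the structure of the proof of Lemma \ref{lem:DOPO} for the coherent Ising machine, since $H_k$ has exactly the same shape as $U_d$ with $p$ replaced by $p(t)$ (treated as a fixed parameter), $\Dl$ playing the role of the $+1$ shift, $K$ the role of the quartic coefficient, and $\xi_0 J$ the role of $\Xi$. First I would write out $\nabla H_k = 0$ componentwise. The $x_i$-equations and $y_i$-equations decouple into
\begin{align}
	\l(p(t)+\Dl+K(x_i^2+y_i^2)\r) y_i - \xi_0 \sum_{j=1}^n J_{i,j} y_j &= 0, \nn \\
	\l(p(t)-\Dl-K(x_i^2+y_i^2)\r) x_i - \xi_0 \sum_{j=1}^n J_{i,j} x_j &= 0, \nn
\end{align}
once we move everything to one side (note the signs coming from $H_k = \sum (\tfrac{K}{4}(x_i^2+y_i^2)^2 - \tfrac{p}{2}(x_i^2-y_i^2) + \tfrac{\Dl}{2}(x_i^2+y_i^2)) - \tfrac{\xi_0}{2}\sum_{i,j} J_{i,j}(x_ix_j + y_iy_j)$).

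The key step is to kill the $y$-component. Reading the $y$-equations as the eigenvalue-type relation $\big(\diag\{p(t)+\Dl+K(x_i^2+y_i^2)\} - \xi_0 J\big) y = 0$, I would observe that each diagonal entry $p(t)+\Dl+K(x_i^2+y_i^2)$ is at least $p(t) + \Dl > \lm'_{\max} \geq \lm_{\max}(\xi_0 J)/\xi_0 \cdot \xi_0$... more carefully: since $p(t) > \lm'_{\max}$ and $\Dl, K(x_i^2+y_i^2) \geq 0$, the matrix $\diag\{p(t)+\Dl+K(x_i^2+y_i^2)\} - \xi_0 J$ is strictly diagonally-dominant in the Gershgorin sense relative to $\xi_0 J$ — more precisely it is positive definite because its smallest eigenvalue is $\geq p(t)+\Dl - \xi_0\lm'_{\max} > 0$ (here one should be mildly careful about whether the statement intends $\lm'_{\max}$ to be the largest eigenvalue of $J$ or of $\xi_0 J$; I will assume it absorbs $\xi_0$ or that $\xi_0 \le 1$, matching the convention in Lemma \ref{lem:DOPO}). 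A positive definite matrix annihilates only $0$, so $y = 0$. Then $\nabla H_k = 0$ reduces to $\l(p(t)-\Dl-Kx_i^2\r)x_i - \xi_0\sum_{j} J_{i,j}x_j = 0$, which is precisely $\nabla U_h(x) = 0$; hence $\pi_k$ maps $\cC(H_k)$ onto $\cC(U_h)$ and, since every preimage has $y=0$, it is a bijection, so it is well-defined.

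For the Morse-index equality I would compute the Hessian at a critical point $(x,0)$. Because $y = 0$, the mixed second partials $\pt^2 H_k/\pt x_i \pt y_j$ all vanish (each such term carries a factor $x_i$ or $y_j$ times $y$-type factors that vanish), so $D^2 H_k(x,0)$ is block-diagonal:
\begin{align}
	D^2 H_k(x,0) = \diag\Big\{\, 3KC + (\Dl - p(t))I_n - \xi_0 J, \;\; (p(t)+\Dl)I_n - \xi_0 J \,\Big\}, \nn
\end{align}
where $C = \diag\{x_1^2,\dots,x_n^2\}$. The first block is exactly $D^2 U_h(x)$, and the second block is positive definite by the hypothesis $p(t) > \lm'_{\max}$ (together with $\Dl > 0$), so it contributes no negative eigenvalues. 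Therefore $i_{H_k}((x,0)) = i_{U_h}(x)$. The only genuinely delicate point is pinning down the exact constant in ``$p > \lm'_{\max}$'' so that both the $y=0$ deduction and the positive-definiteness of the $y$-block go through with the paper's normalization of $\xi_0$; I would state explicitly that I use $p(t)+\Dl - \xi_0\lm'_{\max} > 0$, which is implied since $\Dl > 0$ and $p(t) > \lm'_{\max} \geq \xi_0 \lm'_{\max}$ under the standing assumption $\xi_0 \le 1$ (or otherwise rescale). Everything else is a routine transcription of the CIM argument.
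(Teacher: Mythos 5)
Your proposal is correct and follows essentially the same route as the paper's own (sketched) proof: read the $y$-equations of $\nabla H_k=0$ as $\bigl(\diag\{p+\Dl+K(x_i^2+y_i^2)\}-\xi_0 J\bigr)y=0$ with a positive definite matrix to force $y=0$, identify the reduced $x$-equations with $\nabla U_h=0$, and use the block-diagonal Hessian $\diag\{3KX+(\Dl-p)I_n-\xi_0 J,\ (\Dl+p)I_n-\xi_0 J\}$ whose second block is positive definite. Your caveat about whether the threshold should really be $p+\Dl>\xi_0\lm'_{\max}$ rather than $p>\lm'_{\max}$ is a fair observation about the paper's normalization of $\xi_0$, but it does not change the argument.
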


Since the proof of this lemma is similar as the one of Lemma \ref{lem:DOPO}, we give the sketch of the proof.

\begin{proof}[The sketch proof of Lemma \ref{lem:kpo.sb}]
	When $p > \aa_*^2$, the critical point $(x,y)$ of $\nabla H_k = 0$ satisfies that $y = 0$ and $x = (x_1,\dots, x_n)$ is the root of
	\begin{align}
		-\left(p-\Delta-Kx_{i}^{2}\right) x_{i}+\xi_{0} \sum_{j=1}^{n} J_{i, j} x_{j} =0, \quad 1 \leq i \leq n.
	\end{align}
	Therefore, if $(x,y) \in \cC(H_k)$, then $y=0$ and $x\in \cC(U_h)$ and vice versa. It follows that $\pi_h$ is a bijection.
	
	Note that the Hessian of $H_k$ at the critical point $(x,0)$ is given by $D^2 H_k(x,0) = \diag\{3KX + (\Dl-p)I_n - \xi_0 J, (\Dl+p)I_n-\xi_0 J\}$,
	where $X = \diag\{x_1^2, \dots, x_n^2\}$.
	Note that $(\Dl+p)I_n-\xi_0 J$ is positively definite when $p > \lm_{\max}'$. It follows that $i_{H_k}((x,y)) = i_{U_h}(x)$. This lemma follows.
\end{proof}

\begin{proposition}\lb{prop:mini.KPO.SB}
	When $p>  \max\{\aa_*^2, \lm'_{\max}\}$, if $(x,y)$ is a global minimum point of $H_{k}(x,y)$, then $\sgn (x)$ minimizes the Ising model \eqref{eqn:Ising,kpo}. 	
\end{proposition}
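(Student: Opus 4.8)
The plan is to mirror the proof of Proposition \ref{prop:cim}: first use Lemma \ref{lem:kpo.sb} to collapse the minimization from the phase space $\R^{2n}$ onto the configuration space $\R^n$, matching a global minimizer of $H_k$ with a global minimizer of the potential $U_h$, and then rescale $U_h$ into the canonical form \eqref{eqn:pote} and invoke Theorem \ref{thm:main}.

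First I would observe that, for fixed $p$, the function $H_k(x,y)$ is coercive on $\R^{2n}$ since the leading term $\sum_i\tfrac K4(x_i^2+y_i^2)^2$ dominates the remaining quadratic terms; hence $H_k$ attains a global minimum at some $(x_0,y_0)$, which is therefore a critical point, $(x_0,y_0)\in\cC(H_k)$. Because $p>\lm'_{\max}$, Lemma \ref{lem:kpo.sb} forces $y_0=0$ and $x_0\in\cC(U_h)$. Since $H_k(x,0)=U_h(x)$ for all $x\in\R^n$, we get for any $\xi\in\R^n$ that $U_h(\xi)=H_k(\xi,0)\ge H_k(x_0,0)=U_h(x_0)$, so $x_0$ is a global minimum point of $U_h$. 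I would stress that this step uses only the critical-point structure from Lemma \ref{lem:kpo.sb}; it does not require any pointwise comparison between $H_k(x,y)$ and $H_k(x,0)$, which is not obviously valid because of the quartic coupling $\tfrac K4(x_i^2+y_i^2)^2$ and the $y$-coupling $-\tfrac{\xi_0}{2}y^TJy$.

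Next I would reduce $U_h$ to \eqref{eqn:pote}. Substituting $x=\mu\tilde x$ with $\mu=\sqrt{\xi_0/K}$ and dividing by the positive constant $K\mu^4$ yields $\tfrac{1}{K\mu^4}U_h(\mu\tilde x)=\sum_i\tfrac14\tilde x_i^4+\tfrac{\bb-\aa^2}{2}\tilde x^T\tilde x-\tfrac12\tilde x^TJ\tilde x$, which is the function $U$ of \eqref{eqn:pote} in the variable $\tilde x$ with $S=J$, $\bb=\Dl/\xi_0$ and $\aa^2=p/\xi_0$. Since a positive rescaling of the variable and a positive multiplicative factor do not change which point minimizes, $\tilde x_0:=x_0/\mu$ is a global minimum point of $U$, and $\sgn(\tilde x_0)=\sgn(x_0)$ because $\mu>0$. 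For this choice of $\bb$ and $S$, Theorem \ref{thm:main} provides a threshold $\aa_*$ such that $\aa>\aa_*$ makes $\sgn(\tilde x_0)$ a minimizer of the Ising model \eqref{eqn:Ising,kpo}; this is the meaning of the hypothesis ``$p>\aa_*^2$'' in the statement (the constant $\xi_0$ being absorbed into the normalization of $\aa_*$, as in Propositions \ref{prop:cim}-\ref{prop:cim2}). Together with $p>\lm'_{\max}$, needed to apply Lemma \ref{lem:kpo.sb}, the conclusion follows for $p>\max\{\aa_*^2,\lm'_{\max}\}$.

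The main obstacle is conceptual rather than computational. Unlike the SB Hamiltonian, which is genuinely kinetic-plus-potential, $H_s(x,y)=\sum_i\tfrac\Dl2 y_i^2+U_h(x)$ so that $\min H_s=\min U_h$ is immediate, the KPO Hamiltonian $H_k$ is not of this form, so the reduction to $U_h$ must pass through Lemma \ref{lem:kpo.sb}. The delicate point is the logical order: one must first secure the existence of a global minimizer (via coercivity) so that it is automatically a critical point, and only then invoke the vanishing of its $y$-coordinate. The only remaining bookkeeping is to check that the rescaling correctly transports the threshold $\aa_*$ of Theorem \ref{thm:main} to the stated condition on $p$.
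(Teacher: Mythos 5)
Your proposal is correct and follows essentially the same route as the paper: use Lemma \ref{lem:kpo.sb} to force $y=0$ at the minimizer and identify it with a global minimizer of $U_h$ via $H_k(x,0)=U_h(x)$, then rescale $U_h$ into the form \eqref{eqn:pote} and apply Theorem \ref{thm:main}. Your added coercivity remark and the explicit rescaling $\mu=\sqrt{\xi_0/K}$, $\bb=\Dl/\xi_0$, $\aa^2=p/\xi_0$ merely spell out details the paper leaves implicit.
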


\begin{proof}
	By Lemma \ref{lem:DOPO}, if $(x,0)\in \cC(H_k)$, then $H_k(x,0) = U_h(x)$. Therefore, if $(x,0)$ minimizes $H_k$ globally, then $x$ minimizes $U_h$ globally.
	Via re-scaling of $x$ and Theorem \ref{thm:main}, the signum vector of the global minimum of $U_h$ is a minimizer of the Ising model $E(v) = -\frac{1}{2}v^T J v$. It follows that if $x$ minimizes $H_k$ globally, the signum vector $\sgn(x)$ is the minimizer of Ising model.
\end{proof}

Following the same argument, the similar results of SB algorithm hold.

\begin{proposition}\lb{prop:mini.KPO.SB2}
	When $p>\max \{\aa_*^2, \lm'_{\max}\}$, if $(x,y) \in \cC(H_k)$, then $y = 0$. Moreover, the map $\pi_k$ ($\pi_s$) is a well-defined bijection and $i_{H_k}((x,y)) = i_{U_h}(x)$. If $(x,y)$ is a global minimum point of $H_{s}(x,y)$, then $\sgn (x)$ minimizes the Ising model \eqref{eqn:Ising,kpo}.
\end{proposition}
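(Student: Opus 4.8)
The plan is to transcribe the KPO argument to the SB Hamiltonian $H_s$: the assertions stated for $H_k$ are already Lemma~\ref{lem:kpo.sb}, so it remains to establish the analogous structural facts for $H_s$ (with $\pi_s$ in place of $\pi_k$) and then deduce the Ising conclusion. In fact $H_s$ is easier than $H_k$, since its kinetic part $\sum_i \frac{\Dl}{2}y_i^2$ is already a decoupled positive definite quadratic form in $y$.

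First I would solve $\nabla H_s = 0$. Because $\partial H_s/\partial y_i = \Dl y_i$ and $\Dl > 0$, every critical point of $H_s$ has $y = 0$; this needs no spectral hypothesis on $J$, unlike the KPO case. The remaining equations
\begin{align}
\frac{\pt H_s}{\pt x_i} = K x_i^3 + (\Dl - p)x_i - \xi_0 \sum_{j=1}^n J_{ij} x_j = 0, \qquad 1 \leq i \leq n, \nn
\end{align}
are precisely $\nabla U_h(x) = 0$, so $(x,0) \in \cC(H_s)$ if and only if $x \in \cC(U_h)$, whence $\pi_s$ is a well-defined bijection. For the Morse indices I would compute the Hessian at $(x,0)$,
\begin{align}
D^2 H_s(x,0) = \diag\{\, 3KX + (\Dl - p)I_n - \xi_0 J,\ \Dl I_n \,\}, \nn
\end{align}
where $X = \diag\{x_1^2,\dots,x_n^2\}$; since $\Dl I_n$ is positive definite, the negative eigenvalues of $D^2 H_s(x,0)$ are exactly those of $D^2 U_h(x) = 3KX + (\Dl - p)I_n - \xi_0 J$, so $i_{H_s}((x,0)) = i_{U_h}(x)$.

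For the Ising statement: if $(x,y)$ is a global minimum of $H_s$, then $y=0$ by the above, and since $H_s(x,0)=U_h(x)$ the point $x$ is a global minimum of $U_h$. A positive rescaling $x = \mu z$ with $\mu = \sqrt{\xi_0/K}$ turns $\mu^{-4}K^{-1}U_h$ into the normalized potential $U$ of \eqref{eqn:pote} with $S = J$, $\bb = \Dl/\xi_0$, and $\aa^2 = p/\xi_0$; since $\mu > 0$ we have $\sgn(x) = \sgn(z)$ and $z$ is a global minimum of $U$. The hypothesis on $p$ then gives $\aa > \aa_*$, so Theorem~\ref{thm:main} yields that $\sgn(x)$ minimizes $E(v) = -\frac{1}{2} v^T J v$.

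I do not expect a genuine obstacle: the argument is a routine mirror of the KPO case. The only points to watch are (i) that for $H_s$ the vanishing of the $y$-component is automatic from $\Dl > 0$, so the bound $p > \lm'_{\max}$ is retained only so that Theorem~\ref{thm:main} applies after rescaling; and (ii) the bookkeeping of the positive rescaling, so that the quartic and the coupling coefficients are normalized simultaneously while $\sgn$ is left unchanged.
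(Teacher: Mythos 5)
Your proposal is correct and takes essentially the same route as the paper, which proves the KPO case (Lemma \ref{lem:kpo.sb} and Proposition \ref{prop:mini.KPO.SB}) and then simply asserts that the SB case follows ``by the same argument''; you supply the details, including the accurate observation that for $H_s$ the vanishing of $y$ at critical points and the positive definiteness of the $y$-block $\Delta I_n$ are automatic from $\Delta>0$, so $p>\lambda'_{\max}$ is only retained for the application of Theorem \ref{thm:main}. Your rescaling bookkeeping matches the paper's own (equally brief) treatment in Proposition \ref{prop:mini.KPO.SB}, so there is no gap.
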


If $p$ is a constant, it is impossible to achieve the local minimum point of $H_k(x,y)$ or $H_s(x,y)$ along any solution because solutions of these systems preserve the Hamiltonian energy with $\frac{\d H_k}{\d t} = 0$ and $\frac{\d H_s}{\d t} = 0$. Therefore, it is necessary to assume that $\dot p > 0$ when searching for the global minimum of the Hamiltonian function.

When $\dot p > 0$, the Hamiltonian function of KPO may not always decrease along any solution because  $\frac{\d H}{\d t}$ is not always negative with
\begin{align}
	\frac{\d H_k}{\d t} = -\frac{\dot p}{2}\sum_{i=1}^{n}\l(x_i(t)^2-y_i(t)^2\r).
\end{align}
In SB algorithm, the Hamiltonian decreases along any solution because
\begin{align}
	\frac{\d H_s}{\d t} = -\frac{\dot p}{2}\sum_{i=1}^{n}\l(x_i(t)^2+y_i(t)^2\r) < 0.
\end{align}
Therefore, from the dynamical point of view, the SB algorithm shows advantages over the KPO in
achieving the global minimum point of the Hamiltonian function. As shown in Figure 2 of \cite{goto2019combinatorial}, the SB algorithm also preforms better than CIM in some numerical experiments. Therefore, we explore more dynamical properties of the SB algorithm in Section \ref{sec:cap}.

\setcounter{equation}{0}
\setcounter{figure}{0}
\section{Transit and  Capture in SB algorithm}\lb{sec:tran.cap}
We first discuss the transit and capture of SB algorithm when $\dot \aa(t) = 0$ in system \eqref{eqn:SB.11}, then study the dynamics at the saddle in $\R^2$ as an example of the ``neck''. Last we investigate  the capture set of the SB algorithm when  $\dot \aa(t) > 0$ in system \eqref{eqn:SB.11} to illustrate the convergence of SB algorithm.

\subsection{Transit and capture in autonomous Hamiltonian} \lb{sec:transition}

In this section, suppose that $\aa > \aa_*$ is a constant. Then the system \eqref{eqn:SB.11} is autonomous.

Before discussing the transit and capture orbits, we first introduce some concepts from celestial mechanics.
Consider the Hamiltonian $H$ in \eqref{eqn:SB.Hami} with $\aa(t) = \aa$, i.e.,
\begin{align}
	H (x,y) =\sum_{i=1}^{n} \frac{1}{2}y_{i}^{2} + U(x) =\sum_{i=1}^{n} \frac{1}{2}\dot{x}_{i}^{2} + \sum_{i=1}^{n} \l( \frac{1}{4}x_{i}^{4}+\frac{\bb-\aa^2}{2}x_{i}^{2}\r) -\frac{1}{2} x^T Sx. \lb{eqn:tc.H}
\end{align}
The energy surface or the level set $\Sg_c = H^{-1}(c)$ of given Hamiltonian energy $c$ is preserved under the Hamiltonian flow of the vector field \eqref{eqn:SB.11} with $\aa(t) = \aa$.

Define the {\it projection} 	
$\pi: \R^{n} \times \R^{n} \to \R^{n}, (x,y)\mapsto x $.
Following the convention of celestial mechanics, we define the {\it Hill's region} as $\cR_c := \pi(\Sg_c)$ which is the shadow of $\Sg_c$ under the projection.
Since kinetic energy $\sum_{i=1}^{n} \frac{1}{2}y_{i}^{2}$ in the Hamiltonian \eqref{eqn:tc.H} is non-negative, then the Hill's region $\cR_c$ is given by the sub-level set of the potential function $U(x)$ as
\begin{align}
	\cR_c := \{x\in \R^n| U(x)< c\}. \lb{eqn:hill}
\end{align}
Note that $U(0) =0$ and $\lim_{|x| \to \infty} U(x) = \infty$. One can prove that $\cR_c$ for any $c$ is a bounded subset of $\R^n$. The Hill's region in $\R^2$ is shown in Figure \ref{fig:Hill.R2}.

Suppose $x_1$ and $x_2$ are two different local minimum points of the potential $U(x)$. By Corollary \ref{coro:minimum}, $\sgn(x_1) \neq \sgn(x_2)$. We define the {\it neighborhood of minimum point} as follows.

\begin{definition}
	When $\aa > \aa_*$, suppose that $x$ is a local minimum point of $U(x)$ and $N(x)$ is a path-connected neighborhood of $x$. We call $N(x)$ as the neighborhood of minimum point $x$ if $N(x) \cap  \cC(U) =\{x\} \subset \cC_0(U)$, $D^2 U|_{N(x)}$ is positively definite, and $\sgn(x) = \sgn(a)$ for any $a\in N(x)$. We denote the neighborhood of minimum point $x$ as $\Nn(x)$.
\end{definition}

Note that for some neighborhood of $x_1$ and $x_2$ with $x_1,x_2 \in \cC_0(U)$, we must have that $\Nn(x_1) \cap \Nn(x_2) = \emptyset$ because $\sgn(x_1) \neq \sgn(x_2)$.
By the definition of the neighborhood of minimum, the transit and capture orbit $x$ of the SB algorithm are defined as follows.
\begin{definition}\lb{def:cap.tst}
	Suppose $\aa > \aa_*$, $c$ is a constant and $x(t) \in C^2 (\R, \cR_c)$ is one orbit of \eqref{eqn:SB.11}. The orbit $x(t)$ is called  {\it transit} on $I\subset \R$, if there exit some $t_1$ and $t_2$ in $I$, two different local minimum points $x_1$ and $x_2$ and two corresponding $\Nn(x_1)$ and $\Nn(x_2)$ such that $x(t_1) \in \Nn(x_1)$ and $x(t_2) \in \Nn(x_2)$; the orbit $x(t)$ is called capture, if there exists $t_3$ and $x_3 \in \cC_0(U)$ such that $x(t_3) \in \Nn(x_3)$, and when $t\geq t_3$,  $x(t) \notin \Nn(x_4)$ for any $x_4\in \cC_0(U)\bs \{x_3\}$ and any neighborhood of minimum point $x_4$.
\end{definition}

By Definition \ref{def:cap.tst}, we can see that the necessary condition of transit on the energy surface $\cR_c$ is the existence of a continuous path $g_0(t): [0,1] \to \cR_c$ satisfying $g_0(0)\in \Nn(x_1)$ and $g_0(1)\in \Nn(x_2)$ where $x_1$ and $x_2$ are two different local minimum points.
Via Morse theory, we prove there exists a path in $\cR_c$ connecting $g_0(0)$ with $x_1$ in following lemma.

\begin{lemma}\lb{lem:nage.flow}
	There exists a path $g_1\in C^2([0,1],\cR_c)$ satisfying $ g_1(0) = g_0(0)$ and $g_1(1) =x_1$.
\end{lemma}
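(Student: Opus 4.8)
The plan is to construct $g_1$ by following the negative gradient flow of $U$ starting at $g_0(0)$ — this is the ``Morse theory'' input — and then repairing its tail so that it terminates exactly at $x_1$ in a $C^2$ fashion. Let $\gamma\colon[0,\infty)\to\R^n$ solve $\dot\gamma=-\nabla U(\gamma)$ with $\gamma(0)=g_0(0)$. Since $\frac{\d}{\d t}U(\gamma(t))=-|\nabla U(\gamma(t))|^2\le 0$ and $U(g_0(0))<c$ (because $g_0(0)\in\cR_c$), we have $U(\gamma(t))<c$ for all $t$, hence $\gamma(t)\in\cR_c$ for all $t\ge 0$; as $\cR_c$ is bounded (proven above), the solution is global and its orbit is precompact in $\cR_c$. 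Standard gradient-flow theory then gives that the $\omega$-limit set of $\gamma$ is nonempty, compact, connected, invariant, and consists of critical points of $U$; since by Proposition \ref{thm:ep.net} every critical point of $U$ is nondegenerate, hence isolated, for $\aa>\aa_*\ge\aa_0$, this $\omega$-limit set is a single point $x^*$, so $\gamma(t)\to x^*$ with $U(x^*)\le U(g_0(0))<c$, i.e. $x^*\in\cR_c$.

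The crux is to identify the limit $x^*$ with $x_1$. The relevant structural facts for $\aa$ large are: by Corollary \ref{coro:minimum} and Lemma \ref{lem:mor.bar.u}, within the open orthant $O_{\sigma_1}:=\{x:\sigma_{1,i}x_i>0\ \forall i\}$, where $\sigma_1:=\sgn(x_1)\in\{-1,1\}^n$, the only critical point of $U$ is the local minimum $x_1$, every other critical point having a coordinate that tends to $0$ and therefore lying on $Z:=\bigcup_i\{x_i=0\}$; moreover $\min_Z U=U_s$ (the constrained minimum of $U$ on a coordinate hyperplane is attained at a corner, which is an index-$1$ critical point), while $U(x_1)\le U_M<U_s$ by Proposition \ref{prop:Us>um}. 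Since $g_0(0)\in\Nn(x_1)\subset O_{\sigma_1}$, the trajectory $\gamma$ starts in $O_{\sigma_1}$, and one argues that it never exits $O_{\sigma_1}$ — it can meet $Z$ only at energies $\ge U_s$, and a case analysis of which sublevel component $\gamma$ descends into keeps it on the $x_1$-side of every separatrix — so its limit $x^*$ is a critical point in $\overline{O_{\sigma_1}}$, forcing $x^*=x_1$. If $\gamma$ happens to lie on the stable manifold of a saddle, one first replaces $g_0(0)$ by a nearby point of the open set $\Nn(x_1)\cap\cR_c$ and prepends the short connecting segment, which remains in $\cR_c$. Establishing this confinement is the main obstacle, and it is precisely here that the large-$\aa$ geometry of $U$ — well-separated minima near $\aa\sigma$ and $U$ large on $Z$ — is essential; the point is equivalently that $g_0(0)$ and $x_1$ lie in the same connected component of the open set $\cR_c$.

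Finally, to produce a $C^2$ curve on $[0,1]$ ending exactly at $x_1$, choose $\rho>0$ small enough that the Euclidean ball $B(x_1,\rho)$ lies in $\Nn(x_1)\cap\cR_c$ — possible since $x_1\in\cR_c$ is an interior point and $D^2U\succ 0$ near $x_1$ — then pick $T$ with $\gamma(T)\in B(x_1,\rho)$ and let $g_1$ be the concatenation of $\gamma|_{[0,T]}$ with the straight segment from $\gamma(T)$ to $x_1$, which stays in $B(x_1,\rho)\subset\cR_c$ by convexity. Reparametrizing $[0,T+1]$ onto $[0,1]$ by a $C^2$ change of variable whose first and second derivatives vanish at the junction makes $g_1\in C^2([0,1],\cR_c)$ with $g_1(0)=g_0(0)$ and $g_1(1)=x_1$, as required.
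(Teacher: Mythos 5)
Your overall skeleton is the paper's: run the negative gradient flow of $U$ from $g_0(0)$, show it converges to a critical point, and then reparametrize/smooth the tail to land exactly at $x_1$ (your explicit $C^2$ repair at the end is fine, and more detailed than the paper's ``re-scaling and compactification''). The genuine gap is exactly the step you yourself flag as ``the main obstacle'': identifying the limit $x^*$ with $x_1$. Worse, the structural facts you invoke for the orthant-confinement argument are false. By Proposition \ref{thm:ep.net} the critical points of $U$ other than the local minima are only within $\O(1/\aa)$ of points of $\A_0^n$ having some zero coordinates; they do \emph{not} lie on $Z=\bigcup_i\{x_i=0\}$, and generically they sit strictly inside some open orthant. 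The paper's own $n=2$ example already refutes your claim: the saddles $(\lm_3,-\lm_4)$ and $(\lm_4,-\lm_3)$ have all coordinates nonzero and lie in the same open quadrant as the local minimum $(\lm_2,-\lm_2)$, so an open orthant $O_{\sigma_1}$ may well contain saddles besides $x_1$. Likewise $\min_Z U=U_s$ is not correct as stated, since a constrained minimizer of $U$ on a coordinate hyperplane is in general not a critical point of $U$ (on $\{x_i=0\}$ one has $\pt U/\pt x_i=-\sum_j s_{ij}x_j\neq 0$ generically). So the ``case analysis of separatrices'' is both unproven and built on premises that fail, and your closing remark that the statement ``is equivalently that $g_0(0)$ and $x_1$ lie in the same component of $\cR_c$'' merely restates what has to be shown.

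The paper avoids this global picture entirely and argues locally: along the flow $U$ decreases and stays below $c$, the trajectory is bounded, $\int_0^\infty|\dot\phi|^2\,\d t<\infty$ together with uniform continuity gives $\nabla U(\phi(t))\to 0$, the Palais--Smale condition holds on the compact sublevel set, and then the defining properties of the neighborhood $\Nn(x_1)$ --- it contains $x_1$ as its \emph{unique} critical point and $D^2U$ is positive definite there, with $g_0(0)\in\Nn(x_1)$ --- are what single out $x_1$ as the limit. If you want to complete your write-up in the spirit you started, you must either supply an honest confinement/identification argument of this local type (e.g., a forward-invariance statement for a suitable sublevel-type neighborhood of $x_1$ inside $\Nn(x_1)$), or abandon the orthant/separatrix route; as written, the central step of the lemma is missing.
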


\begin{proof}
	We take the negative gradient flow of $U(x)$ as
	\begin{align}\lb{eqn:phi.nega}
		\phi: [0,\infty) &\longrightarrow \R^n, \\
		\dot \phi(t)&= -\nabla U(\phi(t)), \nn\\
		\phi(0)& =  g_0(0).\nn
	\end{align}
	Along the solution $\phi(t)$ of \eqref{eqn:phi.nega}, $U(x)$ decreases because
	\begin{align}
		\frac{\d}{\d t} U(\phi(t)) &= \< \nabla U(\phi(t)), \dot \phi(t)\> = - |\dot \phi(t)|^2< 0,
	\end{align}
	where $\< \cdot \>$ is the inner product in $\R^n$ and $|\cdot|$ is the norm in $\R^n$.
	Therefore, for any $t \in \R^+$, we have $U(\phi(t)) <U(g_0(0))$ for any flow $\phi(t)$ of \eqref{eqn:phi.nega}.
	
	Denote $\phi(\infty)= \lim_{t\to \infty} \phi(t)$. Then $U(\phi(\infty)) < U(\phi(0))$. It follows that the flow must be bounded. Namely, $\max_{t\in \R^+}|\phi(t)| < \bar B_1$. It follows that $\max_{t\in [0,\infty)} |U(\phi(t))| < \bar B_2$ for some constant $\bar B_2$. Note that the Hill's region $\cR_{\bar B_2} = \{x|U(x) \leq \bar B_2\}$ is bounded. This yields that both $U(x)$ and $\nabla U(x)$ are bounded in $\cR_{\bar B_2}$. Hence, both $|\dot \phi(t)|$ and $|\ddot \phi(t) |$ are bounded because
	\begin{align}
		\ddot \phi(t) = D^2 U(\phi(x)) \dot \phi(x) = -D^2 U(\phi(x)) \nabla U(x). \lb{eqn:dd.phi}
	\end{align}
	Namely, $|\dot{\phi}|_{C^1([0,\infty),\R^n)} = \sup_{t\in[0,\infty)}|\dot \phi(t)| +  \sup_{t\in[0,\infty)}|\ddot \phi(t)|$ is bounded and $\dot{\phi}$ is uniformly continuous on $[0, \infty)$. Also we have
	\begin{align}
		\int_{0}^{\infty} |\dot \phi(t)|^2 \d t
		=\int_{0}^{\infty} -\frac{\d}{\d t} U(\phi(t)) \d t= U(\phi(0)) -U(\phi(\infty)) < \infty, \nn
	\end{align}
	where the first equality holds by $\frac{\d}{\d t} U(\phi(t)) = \<\nabla U(\phi(t)), \dot \phi(t) \>$.	By uniform continuity of $\dot \phi(t)$, it follows that
	\begin{align}
		\lim_{t\to \infty} \dot \phi(t) =	\lim_{t\to \infty}  \nabla U(\phi(t)) = 0.
	\end{align}
	By the compactness of $\cR_{\bar B_2}$, the Palais–Smale condition (cf. Page 3 of \cite{MR845785}) holds.
	Since $D^2U$ are positively definite in $\Nn(x_1)$ and $x_1$ is the unique critical point in $\Nn(x_1)$, there exists a sequence of $t_n$ satisfying $t_n \to \infty$ such that $\phi(t_n)$ converges to the critical point $x_1$. Namely, $\lim_{n\to \infty} \phi(t_n) = x_1$. By re-scaling $t$ and compactification of the flow, we obtain the path $g_1(t)\in C^2([0,1],\cR_c)$ satisfying $ g_1(0) = g_0(0)$ and $g_1(1) =x_1$.
\end{proof}

By Lemma \ref{lem:nage.flow}, there is  $g_2\in C^2([0,1],\cR_c)$ satisfying $g_2(0) = g_0(1)$ and $g_2(1) =x_2$.
Via concatenation, the path $g(t) = g_1^{-1} *g_0*g_2 (t):[0,1] \to \cR_c$ connects two local minimum points $x_1$ and $x_2$ in $\cR_c$ where $g_1^{-1}$ is the inverse path of $g_1$ in Lemma \ref{lem:nage.flow}. Therefore, transit implies that the existence of a path $g\in C([0,1], \cR_c)$ connecting two local minimum points $x_1$ and $x_2$.

\begin{proof}[Proof of Theorem \ref{thm:mpt}]
	When $\aa> \aa_*$, $U_s > U_M$ holds by Proposition \ref{prop:Us>um}.
	Suppose that $U_{\min} = \min_{x\in \R^n} U(x)$ is the global minimum value of potential $U(x)$.
	
	We prove this theorem by contradiction. Suppose that $x(t)$ is a transit orbit in $\cR_{U_s}=\{x|U(x) < U_s\}$ with $x(t_1)\in \Nn(x_1)$ and $x(t_2) \in \Nn(x_2)$. By Lemma \ref{lem:nage.flow}, there exists a continuous path $g_*(t) \subset \cR_{U_s}$ with $g_*(0) = x_1$ and $g_*(1) = x_2$. Define $c_0$ as
	\begin{align}
		c_0 = \inf_{g\in \Lm} \max_{t\in [0,1]} U(g(t)),
	\end{align}
	where $\Lm=\{g(t) \in C^2([0,1], \cR_{U_s})|g(0) = x_1, g(1) = x_2\}$. Note that $g_* \in \Lm$.
	By the definition of $c_0$, $c_0 \leq \max_{t\in[0,1]} U(g_*(t)) < U_s$ and $\{x|U(x)<\max_{t\in[0,1]} U(g_*(t)) \}$ is compact. It follows that the Palais–Smale condition holds.
	By the deformation lemma (cf. Theorem A.4 of \cite{MR845785}) and the moutain pass theorem (cf. Theorem 2.2 of \cite{MR845785}), there exists at least one mountain pass point $x_0 \in\cR_{U_s}$ such that $c_0 = U(x_0) < U_s$ and $\nabla U (x_0) = 0$.
	According to Hofer in \cite{MR812787} or Tian in \cite{MR794263}, the Morse index of $x_0$ satisfies $i_{U}(x_0) = 1$. Therefore,
	$x_0$ is a saddle. It contradicts $U(x_0)< U_s$. Hence, the transit is impossible.
	
	If the orbit is not capture, then $x(t)$ is a transit orbit. But when $c < U_s$, the transit is impossible. Then this theorem follows.	
\end{proof}

\subsection{Transit in $\R^2$}\lb{sec:transit.R2}
When $\aa^2> \bb-2$, the values of critical points of $U(x)$ can be classified into
\begin{align}
	c_0 & = U(0) =0, \lb{eqn:r2.c0}\\
	c_1 &= U(x_1)= -\frac{(\aa^2-\bb)^2}{4}+\frac{1}{2},\lb{eqn:r2.sad}\\
	c_2 &= U(x_2)= -\frac{(\aa^2-\bb)^2}{2}-(\aa^2-\bb)+\frac{3}{4},\lb{eqn:r2.min1}\\
	c_3 &= U(x_3)= -\frac{(\aa^2-\bb)^2}{2}-(\aa^2-\bb) -\frac{1}{2},\lb{eqn:r2.min2}
\end{align}
where $x_1\in \cC_s(U)$, $x_2 \in \{(\lm_2,-\lm_2), (-\lm_2,\lm_2)\}$ is a local minimum point and $x_3 \in \{(\lm_1,\lm_1), (-\lm_1,-\lm_1)\}$ is a global minimum point as in Table \ref{tab:1}.
Applying Theorem \ref{thm:mpt} directly, the following proposition holds.
\begin{proposition}
	If the orbit $x(t)$ is transit in $\R^2$, then $H(\dot{x},x) > c_1$; if $H(\dot{x},x) < c_1$, then $x(t)$ is a capture orbit in $\R^2$.
\end{proposition}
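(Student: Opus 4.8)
The plan is to recognize this proposition as the two-dimensional, $S = S_2$ instance of Theorem \ref{thm:mpt}, so that the real work is to identify the abstract quantity $U_s = \min_{x\in\cC_s(U)} U(x)$ appearing there with the explicit number $c_1$ of \eqref{eqn:r2.sad}. Once $U_s = c_1$ is in hand, both assertions drop out: for a transit orbit $x(t)$ of the autonomous system ($\dot\aa\equiv 0$, so the energy $c := H(\dot x,x)$ is conserved along the orbit) Theorem \ref{thm:mpt} gives $c \geq U_s = c_1$, while $c < c_1 = U_s$ forces $x(t)$ to be a capture orbit.

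First I would fix $\aa > \aa_* = \sqrt{\bb+2}$, the range in which the transit/capture framework of Section \ref{sec:transition} applies. In this range Table \ref{tab:1} identifies the saddle set as $\cC_s(U) = \{(\lm_3,-\lm_4),(-\lm_3,\lm_4),(\lm_4,-\lm_3),(-\lm_4,\lm_3)\}$. Since
\[
U(x_1,x_2) = \tfrac14(x_1^4+x_2^4) + \tfrac{\bb-\aa^2}{2}(x_1^2+x_2^2) - x_1 x_2
\]
is invariant both under the transposition $x_1\leftrightarrow x_2$ and under $x\mapsto -x$, all four saddles lie at one common value of $U$, so $U_s = U(\lm_3,-\lm_4)$.

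Next I would evaluate this single number using the elementary symmetric relations read off from \eqref{eqn:r2.lm3}--\eqref{eqn:r2.lm4}, namely $\lm_3^2+\lm_4^2 = \aa^2-\bb$ and $\lm_3^2\lm_4^2 = 1$ (whence $\lm_3\lm_4 = 1$ and $\lm_3^4+\lm_4^4 = (\aa^2-\bb)^2 - 2$). Substituting,
\[
U(\lm_3,-\lm_4) = \tfrac14\left((\aa^2-\bb)^2 - 2\right) - \tfrac{\aa^2-\bb}{2}(\aa^2-\bb) + 1 = -\tfrac{(\aa^2-\bb)^2}{4} + \tfrac12 = c_1,
\]
so $U_s = c_1$ and Theorem \ref{thm:mpt} finishes the proof. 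The strict inequality $H(\dot x,x) > c_1$ in the transit case is already built into the proof of Theorem \ref{thm:mpt}: transit is excluded even on the open Hill's region $\cR_{U_s} = \{U < U_s\}$, because each saddle lies on $\partial\cR_{U_s}$, so the valleys around two distinct local minima remain in different connected components of $\cR_c$ whenever $c \leq U_s$.

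I expect this to be essentially bookkeeping; the only step needing a little care is the complete inventory of the saddles for $\aa > \aa_*$ --- there are four, and in particular the points $(\lm_2,-\lm_2),(-\lm_2,\lm_2)$ are no longer saddles but minima once $\aa^2 > \bb+2$ --- together with the check that all four saddles sit at the single level $c_1$, which the two symmetries of $U$ reduce to one line. I do not foresee a genuine obstacle.
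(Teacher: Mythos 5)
Your proposal is correct and follows the paper's own route: the paper proves this proposition simply by applying Theorem \ref{thm:mpt}, with the identification $U_s=c_1$ implicit in its list of critical values \eqref{eqn:r2.c0}--\eqref{eqn:r2.min2}. Your explicit verification that all four saddles lie at the level $c_1$ (via $\lm_3^2+\lm_4^2=\aa^2-\bb$, $\lm_3\lm_4=1$) and your remark on the strict inequality just make explicit the bookkeeping the paper omits.
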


When Hamiltonian energy is slightly bigger than $c_1$, the saddles of the potential look like a ``neck''. When the Hamiltonian energy is equal to $c_1$ or slightly bigger $c_1$, the dynamics near the ``necks'' is observed as the ones in \cite{MR233535}.

In the rest of this section, we follow the convention in celestial mechanics by changing the order of momentum and position to $(y,x)$ in order to simplify computations. Take the saddle $z_0 = (0,0,\lm_3, -\lm_4)$ as an example of the "neck". Abusing the notations, we still write the solution of the linearized Hamiltonian system as $\ga(t) = (y_1,y_1,x_1,x_2)$.
The linearized Hamiltonian system at $z_0$ is given by
\begin{align}
	\dot{\ga} = J_4 D^2H(z_0)\ga, \lb{eqn:line.hami}
\end{align}
where $J_4 = (\begin{smallmatrix}
	0 &-I_2\\
	I_2 & 0
\end{smallmatrix})$ is the standard symplectic matrix and $D^2H(z_0)$ is given by
\begin{align}
	D^2H(z_0)=
	\begin{pmatrix}
		1 & 0 & 0 & 0\\
		0 & 1 & 0 & 0\\
		0 & 0 & 3\lm_3^2+\bb-\aa^2 & -1\\
		0 & 0  & -1 & 3\lm_4^2+\bb -\aa^2
	\end{pmatrix}.
\end{align}
The characteristic polynomial of $J_4 D^2H(z_0)$ is $\mu^4 +( 3(\lm_3^2+\lm_4^2)+2\bb-2
\aa^2)\mu^2 + (3\lm_3^2+\bb-\aa^2)(3\lm_4^2+\bb -\aa^2)-1$.
Its eigenvalues are given by $\sg(J_4 D^2H(z_0)) = \{-\mu_1, \mu_1,-\mu_2,\mu_2\}$ where
\begin{align}
	\mu_1 &= \frac{\sqrt{\sqrt{(9(\aa^2-\bb)^2-32}-(\aa^2-\bb)}}{\sqrt{2}} > 0,\\
	\mu_2 &= \frac{\sqrt{-\sqrt{(9(\aa^2-\bb)^2-32}-(\aa^2-\bb)}}{\sqrt{2}} \in \ii\R.
\end{align}
Note that $\aa^2-\bb > 2$, the $\mu_i$s are all well-defined. The corresponding eigenvectors are
$e_1=(-\mu_1 u,-\mu_1,u,1)$, $e_2 =(\mu_1u, \mu_1,u,1)$, $e_3 =(\mu_2v,-\mu_2,-v,1)$, and $e_4 =(-\mu_2v, \mu_2,-v,1)$
where $u = \frac{1}{2} \sqrt{9(\aa^2-\bb)^2-32}-\frac{3}{2}\sqrt{(\aa^2-\bb)^2-4} >0$ and $v=\frac{1}{2} \sqrt{9(\aa^2-\bb)^2-32}+\frac{3}{2}\sqrt{(\aa^2-\bb)^2-4}>1$.
Therefore, the general real solution of \eqref{eqn:line.hami} is given by
\begin{align}
	\ga(t) = \xi_1 e_1 \exp(-\mu_1 t) + \xi_2 e_2 \exp(\mu_1 t) + 2 \RRe (\eta e_3 \exp(\mu_2 t)), \lb{eqn:dd.soln}
\end{align}
where $\xi_1$, $\xi_2$ are real numbers and $\eta$ is a complex number.

By projecting map, the general real solutions in the $x_1x_2$-plane fall into nine different classes by the limit behavior of $x=(x_1(t), x_2(t))$.
Since $(\lm_3, -\lm_4)$ is between $(\lm_1,\lm_1)$ and $(\lm_2, -\lm_2)$, we consider the behavior of $x_2(t)$.
If $t\to -\infty$, $x_2$ can tend to negative infinity, be bounded, or tend to positive infinity according to the sign of $\xi_1> 0$, $\xi_1 = 0$ or $\xi_1 < 0$ respectively. The same statement holds for $t\to \infty$ and $\xi_2$ replace $\xi_1$. If $x_2$ is bounded (in either direction), then the corresponding limit set is unique (up to time translation). The periodic solutions are determined by $\xi_1=\xi_2 =0$.
\begin{proposition}
	If $\xi_1=\xi_2=0$, the periodic orbit  projects into $x$-plane as an ellipse with the major axis of the length $2v|\eta|$ and the minor axis of the length $2|\eta|$. Furthermore, its motion is clockwise.
\end{proposition}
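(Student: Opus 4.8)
The starting point is the explicit general solution \eqref{eqn:dd.soln} of the linearized system \eqref{eqn:line.hami}. Imposing $\xi_1=\xi_2=0$ removes the two hyperbolic modes $e_1\exp(-\mu_1 t)$ and $e_2\exp(\mu_1 t)$, so the orbit reduces to the purely oscillatory part $\ga(t)=2\RRe\bigl(\eta\, e_3\exp(\mu_2 t)\bigr)$. Since $\mu_2\in\ii\R$, write $\mu_2=\ii\om$ with $\om>0$ and $\eta=|\eta|\exp(\ii\th)$; then $\ga$ is $2\pi/\om$-periodic, which is why this orbit is a genuine periodic solution.

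First I would extract the last two components of $\ga(t)$, which are exactly the coordinates $(x_1(t),x_2(t))$ of the projection of the orbit onto the $x$-plane, by substituting $e_3=(\mu_2 v,-\mu_2,-v,1)$. This yields $x_1(t)$ and $x_2(t)$ as explicit sinusoids in the phase $s:=\om t+\th$, with amplitudes carrying the factors $v|\eta|$ and $|\eta|$. Passing to coordinates $X_1,X_2$ adapted to the principal axes of the resulting curve and eliminating $s$, one obtains the Cartesian equation of an ellipse whose semi-axes are $2v|\eta|$ and $2|\eta|$; since $v>1$ was established right after \eqref{eqn:dd.soln}, the former is the major axis and the latter the minor axis, giving the stated lengths.

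For the orientation I would compute the signed sweep rate $x_1(t)\dot x_2(t)-x_2(t)\dot x_1(t)$ along the orbit. By the first batch of equations in \eqref{eqn:SB.11} one has $\dot x_i=y_i$, so the velocity components are simply the first two entries of $\ga(t)$, already available from the eigenvector $e_3$. A short calculation collapses this quantity to a constant multiple of $\om\,|\eta|^2$ with a definite sign read off from the entries of $e_3$; showing that this sign is negative for every $t$ is exactly the assertion that the orbit is traversed clockwise.

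The middle step is routine trigonometric bookkeeping. The one genuinely delicate point is the orientation: the sign of the sweep rate depends both on the sign of $\om$ and on the sign conventions in the eigenvector $e_3$, so a slip there would swap clockwise with counterclockwise. I would therefore pin the sign down not through the elimination but by evaluating the velocity vector $(\dot x_1,\dot x_2)$ at one convenient phase — for instance where $x(t)$ meets the major axis — and checking directly into which half-plane it points.
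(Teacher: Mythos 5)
The step you dismiss as ``routine trigonometric bookkeeping'' is exactly where the argument breaks down, and the delicate point is not the orientation sign but the phase relation between $x_1(t)$ and $x_2(t)$. The last two components of the printed eigenvector $e_3=(\mu_2 v,-\mu_2,-v,1)$ are the \emph{real} numbers $-v$ and $1$, so substituting into $\ga(t)=2\RRe\bigl(\eta e_3\exp(\mu_2 t)\bigr)$ gives $x_1(t)=-2v\,\RRe\bigl(\eta\exp(\mu_2 t)\bigr)$ and $x_2(t)=2\RRe\bigl(\eta\exp(\mu_2 t)\bigr)$: the two coordinates oscillate \emph{in phase}, with $x_1(t)\equiv -v\,x_2(t)$. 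Eliminating the phase then yields a line segment through the origin along $(-v,1)$, not an ellipse, and your proposed sweep rate $x_1\dot x_2-x_2\dot x_1$ is identically zero, so neither the axis lengths nor ``clockwise'' can be read off this way. The quadrature you need --- one coordinate proportional to $\RRe\bigl(\eta\exp(\mu_2 t)\bigr)$, the other to $\IIm\bigl(\eta\exp(\mu_2 t)\bigr)$ --- is what the paper's one-line proof simply writes down ($x_2(t)=2\IIm(\eta\exp(\mu_2 t))$), but it does not follow from $e_3$ as printed: there the relative factor of $\mu_2$ sits between the $y$-block and the $x$-block of the eigenvector, not between $x_1$ and $x_2$. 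In fact, for this linearization the equation is $\ddot x=-D^2U(z_0)x$ with $D^2U(z_0)$ symmetric, so the elliptic eigendirection in $x$-space is a genuine real line (spanned by $(-v,1)$, orthogonal to the hyperbolic direction $(u,1)$ since $uv=1$), and at the linear level the periodic orbit projects to a segment of the $x$-plane; the ellipse and its clockwise sense live in the position--velocity plane of that oscillatory mode (i.e.\ only after pairing an $x$-component of $\ga$ with a $y$-component), not in the $(x_1,x_2)$-plane as your substitution would produce.

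So your plan, executed literally, would not reach the stated conclusion: the amplitudes $2v|\eta|$ and $2|\eta|$ and the nonzero sweep rate are asserted rather than derived, and the computation you outline actually refutes them. To repair the argument you must first settle the eigenvector structure --- verify that the elliptic eigenvector of $J_4D^2H(z_0)$ in \eqref{eqn:line.hami} has the form $(\mu_2 w, w)$ with $w$ a real eigenvector of $D^2U(z_0)$ --- and then either prove the ellipse/clockwise statement in the $(\xi,\dot\xi)$ coordinates of that mode (where it is the standard harmonic-oscillator picture), or identify precisely which two components of $\ga$ are a quarter period out of phase; merely checking the velocity direction at one point of the claimed ellipse, as you suggest at the end, happens downstream of a step that already fails.
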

\begin{proof}
	From \eqref{eqn:dd.soln}, project $\ga(t)$ to the $x$-plane and obtain that
	\begin{align}
		x_1(t) = -2 v \RRe(\eta \exp(\mu_2 t)), \quad x_2(t) = 2 \IIm(\eta \exp(\mu_2 t)), \nn
	\end{align}
	where $v > 0$. Therefore, the motion is clockwise.
\end{proof}

The dynamics near the ``neck'' is not simply transit. There are also asymptotic orbits and capture orbits.
If $\xi_1\xi_2 = 0$, the orbits are asymptotic to the periodic solution in the equilibrium region; if $\xi_1\xi_2<0$, the orbits ``cross'' the equilibrium region of the saddle point form $-\infty$ to $\infty$ or inversely; and if $\xi_1\xi_2>0$, the orbits are captured namely it cannot cross the equilibrium region.

We believe that the phenomena of the ``neck'' also exists when $n\geq 3$. However, it will be much more involved.

\subsection{Capture of SB algorithm}\lb{sec:cap}
In this section, we assume that $\aa$ is a function of $t$  with $\dot{\aa}(t) > 0$ and $\lim_{t \to \infty} \aa(t) = \aa_{\infty}$ where $\aa_{\infty} > 8\aa_*$ is a sufficiently large constant and $\aa_*$ is given in Theorem \ref{thm:main}. It follows that there exists a $t_0$ such that $\aa(t_0)> \aa_*$. The results in Section \ref{sec:opt.set} hold for all $t> t_0$.
Note that $U_s(t)$ in \eqref{eqn:Us} changes along the time.
Since $\aa(t) > 0$ and $\dot{\aa} > 0$, the Hamiltonian decreases with $t$ along any solution. Namely, $\frac{\d H}{\d t} = -2\aa \dot{\aa}\sum_{i = 1}^n x_i^2<0$.

For simplicity, we still use $\nabla$ to denote the gradient with respect to $x$. Since $U_s(t)$ is relevant in capture by Theorem \ref{thm:mpt}.
We first estimate the value of $U_s(t)$.

\begin{lemma}\lb{lem:U_s}
	There exist a positive constant $B_5$ and $t_1\geq t_0$ such that for $t\geq t_1$,
	\begin{align}\lb{eqn:b5.def}
		\l|\frac{U_s(t)}{\aa^2(t)} +\frac{(n-1)\aa^2(t)}{4} \r| <  B_5.
	\end{align}
\end{lemma}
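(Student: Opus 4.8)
The plan is to show that the lowest saddle value behaves like $U_s(t) = -\frac{(n-1)\aa^4(t)}{4} + O(\aa^2(t))$ uniformly for large $t$; dividing by $\aa^2(t)$ then gives \eqref{eqn:b5.def} directly. All the input is already available: the expansion \eqref{eqn:U.x.s}, the fact that a lowest saddle has Morse index $1$, and Proposition \ref{thm:ep.net} describing each critical point of $U$ as a small perturbation of a critical point of $\bar U$.

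First I would fix $t_1 \geq t_0$ large enough that $\aa(t) > \aa_{11}$ for all $t \geq t_1$, which forces $\aa(t) > \aa_{0} > \aa_2$ as well; this is possible because $\aa(t)$ is increasing with $\lim_{t\to\infty}\aa(t) = \aa_\infty$ sufficiently large, and all threshold constants of Section \ref{sec:opt.set} depend only on $\bb$ and $S$. For each such $t$, pick a saddle $x_s = x_s(t) \in \cC_s(U)$ with $U(x_s,t) = U_s(t)$. By the lemma preceding Proposition \ref{prop:Us>um}, $i_U(x_s) = 1$; by Proposition \ref{thm:ep.net} there is a unique $\bar x_s \in \cC(\bar U) = \A_0^n$ with $i_{\bar U}(\bar x_s) = 1$, so exactly one coordinate of $\bar x_s$ vanishes and the other $n-1$ equal $\pm\aa(t)$, and $x_s = \bar x_s + \dl_s$ with $|\dl_s| < B_2/\aa(t)$, where $B_2$ depends only on $\bb$ and $S$.

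Next I would expand $U(x_s,t)$ exactly as in \eqref{eqn:U.x.s}. For each of the $n-1$ coordinates $i$ with $|\bar x_{s,i}| = \aa(t)$, the quartic term $\frac14(\bar x_{s,i}+\dl_{s,i})^4$ contributes $\frac14\aa^4(t) + O(\aa^2(t))$ and the term $\frac{\bb-\aa^2(t)}{2}(\bar x_{s,i}+\dl_{s,i})^2$ contributes $-\frac12\aa^4(t) + O(\aa^2(t))$, using $|\bar x_{s,i}| = \aa(t)$ and $|\dl_{s,i}| < B_2/\aa(t)$; the single vanishing coordinate contributes only $O(1)$; and the whole bilinear term $-\frac12(\bar x_s+\dl_s)^T S(\bar x_s+\dl_s)$ is $O(\aa^2(t))$ since $S$ is fixed and $|\bar x_s| \le \sqrt{n}\,\aa(t)$. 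Summing the $n-1$ nonvanishing coordinates gives $U_s(t) = -\frac{(n-1)\aa^4(t)}{4} + O(\aa^2(t))$ with an implied constant depending only on $n$, $\bb$, $\|S\|$, $B_2$. Dividing by $\aa^2(t) > 0$ yields \eqref{eqn:b5.def} with $B_5$ equal to that constant plus $\sup_{t\ge t_1} (\text{lower-order remainders})/\aa^2(t)$, which is finite because $\aa(t) \ge \aa(t_1) > 0$.

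The point requiring care — rather than a real obstacle — is the dependence on $t$: Proposition \ref{thm:ep.net} and the Morse-index-one lemma are stated for a fixed parameter, so I must emphasize that their threshold and bound constants ($\aa_2$, $\aa_{11}$, $B_2$) are functions of $\bb$ and $S$ alone and hence apply simultaneously at every $t \ge t_1$; together with the upper bound $\aa(t) < \aa_\infty$, the error terms in the expansion are uniform in $t$, which is exactly what is needed to extract a single constant $B_5$.
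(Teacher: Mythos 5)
Your proof is correct and follows essentially the same route as the paper: write the lowest saddle as $x_s=\bar x_s+\dl_s$ with $i_U(x_s)=1$ (so $\bar x_s$ has exactly one zero coordinate and $n-1$ coordinates of size $\aa(t)$), expand $U(x_s,t)$ as in \eqref{eqn:U.x.s} to get $U_s(t)=-\tfrac{(n-1)\aa^4(t)}{4}+\O(\aa^2(t))$, and divide by $\aa^2(t)$. Your added remarks on the $t$-uniformity of the constants $\aa_2$, $\aa_{11}$, $B_2$ merely make explicit what the paper leaves implicit, so no further changes are needed.
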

\begin{proof}
	For any given $t> t_0$, there is an $x_0 \in \cC_s(U)$ with $i_{U}(x_0) = 1$ satisfying $U(x_0,t) =U_{s}(t)$. We can write $x_0 = \bar{x}_0+\dl_0$ where $\bar{x}_0 = (\bar x_1,\dots,\bar x_n)^T$ and $\dl_0=(\dl_1,\dots,\dl_n)^T$.
	We omit $t$ in $\aa(t)$, $x_0(t)$, $U_s(t)$ and $U(x_0(t),t)$ in following formula to simplify the notations.
	\begin{align}
		U_s= U(\bar{x}_0+\dl_0)
		=& \sum_{i = 1}^{n} \frac{1}{4} (\bar{x}_i+\dl_i)^4 + \sum_{i = 1}^{n} \frac{\bb-\aa^2}{2}(\bar{x}_i+\dl_i)^2 - \frac{1}{2}(\bar{x}+\dl)^T S(\bar{x}+\dl) \nn \\
		=&-\frac{(n-1)\aa^4}{4} + \frac{\bb(n-1)\aa^2}{2} \nn	\\
		&\quad +\sum_{i = 1}^{n}\l(\bb\bar{x}_i \dl_i+\frac{3\bar{x}^2_i\dl^2_i}{2}   +\frac{(\bb-\aa^2) \dl_i^2}{2}+\bar{x}_i\dl_i^3+\frac{\dl_i^4}{4}\r) \nn \\
		&\quad - \frac{1}{2}\bar{x}^T S\bar{x} - \bar{x}^TS\dl - \frac{1}{2}\dl^T S\dl.\nn
	\end{align}
	When $\aa$ is sufficiently large, both $ \frac{\bb(n-1)\aa^2}{2}$ and $- \frac{1}{2}\bar{x}^T S\bar{x} $ possess the order $\aa^2$, while $\bb\bar{x}_i \dl_i$,$\frac{3\bar{x}^2_i\dl^2_i}{2}$,  $\frac{(\bb-\aa^2) \dl_i^2}{2}$, $\bar{x}_i\dl_i^3$, $\frac{\dl_i^4}{4}$, $\bar{x}^TS\dl$, and $ \frac{1}{2}\dl^T S\dl$ are bounded. Then there exist a positive $B_5 > 0$ and $t_1 > t_0$ such that for any $t > t_1$,
	\begin{align}
		\l|\frac{U_s(x)}{\aa^2} +\frac{(n-1)\aa^2}{4} \r| <  B_5.
	\end{align}
	This yields this lemma holds.
\end{proof}

We define $U_B(t)$ as
\begin{align}
	U_B(t) := -\frac{(n-1)\aa^4(t)}{4} - B_5 \aa^2(t). \lb{eqn:U_B}
\end{align}
It follows that for all $t > t_1$, $U_s(t) > U_B(t)$ by Lemma \ref{lem:U_s}.
According to Corollary \ref{coro:minimum}, when $t\geq t_0$, there exists $B_6$ such that every $x\in \cC_0(U)$ satisfies
\begin{align}
	|x|^2 \geq n \aa^2(t) + B_6.
\end{align}
Since $\aa_{\infty}$ is sufficiently large, we have
\begin{align}
	\aa^2_{\infty} \gg 2(B_5-B_6). \lb{eqn:ass.aa.infty}
\end{align}
There exists $t_2$ such that
\begin{align}
	\aa(t)^2 > \frac{1}{2} \aa^2_{\infty}, \lb{eqn:at.ainfty}
\end{align}
when $t > t_2$.
Define the constant $R_0$ and the function $U_{R_0}(t)$ as
\begin{align}
	R_0 &:= \frac{(n-1)\aa^2_{\infty}}{2} + B_5, \lb{eqn:R0}\\
	U_{R_0}(t) &:= \min_{|x|^2 =R_0} U(x,t). \lb{eqn:UR0}
\end{align}

Together with \eqref{eqn:ass.aa.infty} and \eqref{eqn:at.ainfty}, there exists $t_3 >  \max\{t_0, t_1, t_2\}$ such that
\begin{align}
	\aa^2(t) \geq \frac{(n-1)\aa^2_{\infty}}{2n} + \frac{(B_5 - B_6)}{n},  \lb{eqn:p.geq.t0}
\end{align}
when $t> t_3$.
For any given $t$ with $t\geq t_3$, the norm of every local minimum point of $U(x,t)$ satisfies
\begin{align}
	|x|^2 \geq R_0. \lb{eqn:x.footprint R0}
\end{align}
At the global minimum point $x_{\min}$ of $U(x)$, we have that $U(x_{\min}(t),t) < U_{R_0}(t)$ when $t > t_3$.
By the definition of $\P(t)$ in \eqref{eqn:p.t}, we have following lemma holds.

\begin{lemma}\lb{lem:cap.nonempty}
	The set $\P(t)$ is non-empty for $t > t_3$.
\end{lemma}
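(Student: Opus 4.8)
The plan is to exhibit an explicit member of $\P(t_*)$ for each fixed $t_* > t_3$, namely the global minimum point of $U(\cdot,t_*)$ carried by the solution that starts there at rest. Concretely, let $x_{\min} = x_{\min}(t_*)$ be a point at which $U(\cdot,t_*)$ attains its global minimum on $\R^n$; since $U(\cdot,t_*)$ is smooth and coercive (as $\lim_{|x|\to\infty} U(x,t_*) = \infty$), such a point exists and is necessarily a critical point of Morse index $0$, so $x_{\min}\in\cC_0(U)$. Let $(x(\cdot),\dot x(\cdot))$ be the unique solution of \eqref{eqn:SB.11} with Cauchy data $x(t_*) = x_{\min}$, $\dot x(t_*) = 0$. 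The point $(x_{\min},0)$ need not be an equilibrium of the nonautonomous system, but it is a perfectly legitimate initial condition, and it suffices to verify that $x_{\min}$ satisfies the two defining constraints of $\P(t_*)$.

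The first constraint, $|x(t_*)|^2 = |x_{\min}|^2 > R_0$, follows from Corollary \ref{coro:minimum}: every $x\in\cC_0(U)$ obeys $|x|^2 \ge n\aa^2(t_*) + B_6$, and combining this with \eqref{eqn:p.geq.t0} gives $n\aa^2(t_*) + B_6 \ge \frac{(n-1)\aa_{\infty}^2}{2} + B_5 = R_0$; strictness is obtained from $\dot\aa > 0$, which makes $\aa^2$ strictly increasing, so the bound \eqref{eqn:p.geq.t0}, valid for all $t>t_3$, is strict at $t_*>t_3$. The second constraint is $H(x(t_*),\dot x(t_*),t_*) = U(x_{\min},t_*) \le \min\{U_{R_0}(t_*),U_B(t_*)\}$, the equality holding because the kinetic term of $H$ vanishes when $\dot x(t_*) = 0$. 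The inequality $U(x_{\min},t_*) \le U_{R_0}(t_*)$ is exactly the (strict) inequality recorded just before the lemma, and is immediate since $x_{\min}$ minimizes $U(\cdot,t_*)$ over all of $\R^n$, hence in particular over the sphere $\{|x|^2 = R_0\}$. For the inequality $U(x_{\min},t_*) \le U_B(t_*)$ I would reuse the expansion from the proof of Proposition \ref{prop:Us>um}: writing $x_{\min} = \bar x_{\min} + \dl$ with $\bar x_{\min}\in\A^n$ and $\dl\sim\O(1/\aa)$ yields $U(x_{\min},t_*) = -\frac{n\aa^4(t_*)}{4} + \O(\aa^2(t_*))$, whereas $U_B(t_*) = -\frac{(n-1)\aa^4(t_*)}{4} - B_5\aa^2(t_*)$, so $U_B(t_*) - U(x_{\min},t_*) = \frac{\aa^4(t_*)}{4} + \O(\aa^2(t_*)) > 0$ once $\aa(t_*)$ is large, which holds for $t_* > t_3$ after possibly enlarging $t_3$ (a threshold still depending only on $\bb$ and $S$). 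Collecting these, $x_{\min}\in\P(t_*)$, hence $\P(t_*)\ne\emptyset$; as $t_*>t_3$ was arbitrary, the lemma follows.

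I do not expect a genuine obstacle here: every estimate used is either quoted from Corollary \ref{coro:minimum} and the displayed inequalities preceding the lemma, or is the same quartic-leading-order expansion already carried out in Proposition \ref{prop:Us>um}. The only things requiring mild care are the bookkeeping needed to make the thresholds $t_0,t_1,t_2,t_3$ mutually consistent (which may force one final enlargement of $t_3$), and the harmless but worth-stating observation that we are free to launch a solution of the time-dependent system from the resting configuration $(x_{\min},0)$ even though it is not a fixed point of that system.
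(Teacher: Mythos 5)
Your proof is correct and follows essentially the same route as the paper: both exhibit the global minimum point $x_{\min}$ of $U(\cdot,t)$ (taken with zero velocity, so $H=U(x_{\min},t)$) as an element of $\P(t)$, using $|x_{\min}|^2\geq R_0$ and $U(x_{\min},t)$ lying below the defining thresholds. Your write-up is in fact slightly more careful than the paper's, which invokes $U_s(t)$ where the definition of $\P(t)$ calls for $U_B(t)$ and leaves the zero-velocity solution implicit, whereas you verify the $U_B$ comparison directly via the quartic expansion from Proposition \ref{prop:Us>um}.
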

\begin{proof}
	For any given $t \geq t_3$, the global minimum point of $U(x)$ satisfies $U_{\min}(t) < \min\{U_{R_0}(t), U_{s}(t)\}$.  By the continuity of $U(x)$ in $x$, there exists a neighborhood $\Nn(x_{\min})$ of $x_{\min}$  such that $x\in \Nn(x_{\min})$, $U_{\min}(t) \leq U(x,t) <  \min\{U_{R_0}(t), U_{s}(t)\}$. Then the set $\P(t)$ is non-empty for any $t> t_3$.
\end{proof}

Now we are ready to prove that $\P(t)$ in \eqref{eqn:p.t} is a capture set.

\begin{proof}[Proof of Theorem \ref{thm:cap}]
	The first step is to prove that if there exists some $t_* \geq t_3$ such that $|x(t_*)|^2 >  R_0$ and $H(x(t_*),t_*) < U_{R_0}(t_*)$, then for all $t\geq t_*$, $H(x(t),t) < U_{R_0}(t)$ and $|x(t)|^2 >  R_0$ hold.
	Note that if there exists $t'$ such that $|x(t')|^2 = R_0$ then $H(x,t') \geq U(x,t') \geq U_{R_0}(t)$. Hence, we only need to prove that for all $t\geq t_*$, $H(x(t),t) < U_{R_0}(t)$ holds.
	
	We prove this by contradiction. We assume there exists $t_4$ such that $H(x(t_4),t_4) >U_{R_0}(t_4)$.
	By the continuity of $H(x(t),t)$ and $U_{R_0}(t)$, we can find
	\begin{align}
		\bar t = \inf \{t> t_*|H(x(t),t) = U_{R_0}(t)\}.
	\end{align}
	Then $H(x(t),t) < U_{R_0}(t)$ and $|x|^2 \geq R_0$ for all $t\in (t_*,\bar t)$.
	Then we have that
	\begin{align}
		H(x(\bar t),\bar t) = H(x(t_*),t_*) + \int_{t_*}^{\bar t} \frac{\d H}{\d t}\d t=  H(x(t_*),t_*) -  \int_{t_*}^{\bar t}\aa \dot{\aa}\sum_{i = 1}^n x_i^2(t) \d t,
	\end{align}
	and for any $x$ satsifies $|x|^2=R_0$,
	\begin{align}
		U(x, \bar t)|_{|x|^2=R_0}
		=U(x,t_*) -  \int_{t_*}^{\bar t}  \aa \dot{\aa}\sum_{i = 1}^n x_i^2 \d t
		=U(x,t_*) -  \int_{t_*}^{\bar t}  \aa \dot{\aa} R_0 \d t,
	\end{align}
	where the last equation holds by $|x|^2 =R_0$.
	Note that $H(x(t_*),t_*) < U(x, t_*)$ and $|x(t)|> R_0$ for all $t\in [t_*,\bar  t)$. Therefore, $H(x(\bar t),\bar t) < U_{R_0}(\bar t)$. It contradicts the assumption.
	Therefore, if $ H(x(t_*),t_*) < U_{R_0}(t_*)$ and $|x(t_*)|^2 > R_0$, then $H(x(t),t) < U_{R_0}(t)$ and $|x(t)|^2 >R_0$ for $t\geq t_*$.
	
	The rest is devoted to proving $H(x(t),t) < U_{B}(t)$ for all $t\geq t_*$.
	The derivative $U_{B}$ is given by
	\begin{align}
		\frac{\d U_{B}}{\d t} &= -\dot{\aa} \l((n-1)\aa^3+B_5\aa \r).
	\end{align}
	Since $x(t)\in \P(t)$ and \eqref{eqn:R0}, we have that $|x|^2 > R_0^2=\frac{(n-1)\aa^2_{\infty}}{2}+B_5 > \frac{(n-1)\aa^2}{2}+B_5$ for all $t\geq t_3$. It yields that for all $t\geq t_*$,
	\begin{align}
		\frac{\d H(x,t)}{\d t} = -\dot{\aa}\aa\sum_{i = 1} ^n x_i^2\leq -\dot{\aa}\aa \l( \frac{(n-1)\aa^2}{2}+B_5 \r) = \frac{\d U_{B}}{\d t}.\lb{eqn:dH.le.dU}
	\end{align}
	Since $U_{B}(t) = U_{B}(t_*) + \int_{t_*}^t \frac{\d U_{B}(t)}{\d t} \d t$, then along the orbit $x(t)$,
	\begin{align}
		H(x(t),t) <  U_{B}(t), \nn
	\end{align}
	for all $t\geq t_*$. Then Theorem \ref{thm:cap} holds.	
\end{proof}

\subsection{Capture in $\R^2$}\lb{sec:cap.r2}
When $n=2$, the two axes divide $\R^2$ into four connected components, i.e., the four quadrants.
If $x(t)$ is captured by one quadrant, then $\sgn(x(t)) \in C(E)$.

Instead of considering $U_B$ in \eqref{eqn:U_B}, we restrict $U(x)$ in \eqref{eqn:pote} to one axis directly because the topology of $\cR_c$ is much simpler than $\R^n$. Take $x_2 = 0$ as an example. It follows that
\begin{align}
	U|_{x_2= 0} = \frac{1}{4} x_{1}^{4}+\frac{\bb-\aa^2(t)}{2} x_{1}^{2}. \nn
\end{align}
Then $U|_{x_2= 0}$ possesses three critical points where
$x_1 = 0$ is the local maxima and $x_1 = \pm \sqrt{\aa^2(t)-\bb}$ are two local minimum points. Suppose that $x_{sd}(t) := (\sqrt{\aa^2(t)-\bb},0)$ and
\begin{align}
	U_{sd}(t) = U(x_{sd},t) = -\frac{(\aa^2(t)-\bb)^2}{4}. \nn
\end{align}
As the proof in Theorem \ref{thm:cap}, if $H(t)<U_{sd}(t)$, the transit is impossible.
We define that $t_0$ by
$ \aa(t_0)^2 = \frac{3}{4}\aa_{\infty}^2+\frac{1}{4}\bb$,
then
$
\aa^2(t_0)> \frac{1}{2}(\aa^2_{\infty}+\bb) -1.
$
Via direct computations, one can verify the existence of $t_0$.

Suppose that $R_0 := \frac{\aa^2_{\infty}-\bb}{2} > 0$. Note that $R_0 > \frac{\aa^2(t)-\bb}{2}$ for all $t >0$.
The restriction of $U(x,t)$ on the $\{x=(x_1,x_2)|x_1^2+x_2^2 = R_0\}$ is given by
$
U_{R_0}(t) = U(x,t)|_{|x|^2 = R_0}.
$
The capture set of orbit $\P_2(t)$ is defined as
\begin{align}
	\P_2(t) = \{x = (x_1,x_2)| H(x(t),t) \leq \min\{U_{R_0}(t), U_{sd}(t)\}\}. \nn
\end{align}

When $|x| < R_0$ and $t > t_0$, $U(x)$ is a super-harmonic function by
\begin{align}
	\Delta U &=\frac{\pt^2 U}{\pt x_1^2} +  \frac{\pt^2 U}{\pt x_2^2}< 3R_0  + 2(\bb - \aa^2(t)) < 0, \nn
\end{align}
where $\aa^2(t) \geq \frac{3}{4}\aa_{\infty}^2+\frac{1}{4}\bb$ for all $t > t_0$. By the weak minimum principle (cf. Theorem 2.3 of \cite{DavidGilbarg2001}), we have that
$ \inf_{|x|< R_0} U = \inf_{|x| = R_0} U$.
Therefore, the inequality $H(x(t),t) \leq U_{R_0}(t)$ yields that $|x(t)| \geq R_0$ if $x(t) \in \P(t)$.
Hence, we can omit the condition on $|x|$ in the definition of $\P_2$.

For any $t\geq t_0$, the potential at the point $x_{\min} = (\lm_1, \lm_1)$ or $x_{\min} = (-\lm_1, -\lm_1)$ achieves its minimum value. Namely,
\begin{align}
	U(x_{\min}) = -\frac{(\aa^2(t)-\bb+1)^2}{2}. \nn
\end{align}
Then $U(x_{\min}) <U_{sd}(t)$. Also $|x_{\min}|^2 = 2\lm_1^2 = 2(\aa^2(t)-\bb + 1)>R_0$. This yields that the set $\P_2(t)$ is non-empty for all $t \geq t_0$.
By Theorem \ref{thm:cap}, we have following proposition.
\begin{proposition}\lb{prop:r2}
	If $x(t_1) \in \P_2(t_1)$ for some $t_1\geq t_*$, then $x(t) \in \P_2(t)$ for all $t\geq t_1$.
\end{proposition}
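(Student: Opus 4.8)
\emph{Proof proposal.} The plan is to imitate the proof of Theorem~\ref{thm:cap}, using that in $\R^2$ the barrier separating the four quadrants is controlled by the single quantity $U_{sd}(t)$. First note that the condition $|x(t)|^2\ge R_0$ built into $\P_2(t)$ is redundant: as observed just before the statement, $U(\cdot,t)$ is super-harmonic on $\{|x|^2<R_0\}$ for $t\ge t_0$, so the weak minimum principle gives $U(x,t)\ge U_{R_0}(t)$ there, and hence $H(x(t),t)\le U_{R_0}(t)$ already forces $|x(t)|^2\ge R_0$. It therefore suffices to show that each of the two scalar inequalities $H(x(t),t)\le U_{R_0}(t)$ and $H(x(t),t)\le U_{sd}(t)$ is preserved for $t\ge t_1$ once it holds at $t_1$; their conjunction then persists, which is exactly $x(t)\in\P_2(t)$.

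For each of these I would run the first-violation-time contradiction from Theorem~\ref{thm:cap}. Writing $W$ for $U_{R_0}$ or $U_{sd}$, suppose $H(x(t_1),t_1)\le W(t_1)$ but that $\bar t:=\inf\{t>t_1:H(x(t),t)=W(t)\}$ is finite; then $H(x(t),t)<W(t)$, and a fortiori $U(x(t),t)<W(t)$, on $[t_1,\bar t)$. The decisive computation is $\frac{\d}{\d t}H(x(t),t)=\partial_t U(x(t),t)=-\aa(t)\dot\aa(t)\,|x(t)|^2$, together with $\frac{\d}{\d t}U_{R_0}(t)=-\aa\dot\aa R_0$ (by Danskin's theorem, since $\partial_t U\equiv-\aa\dot\aa R_0$ on $\{|x|^2=R_0\}$) and $\frac{\d}{\d t}U_{sd}(t)=-(\aa^2-\bb)\aa\dot\aa$. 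If I can show $|x(t)|^2>R_0$ on $[t_1,\bar t)$ when $W=U_{R_0}$, and $|x(t)|^2\ge\aa^2(t)-\bb$ on $[t_1,\bar t)$ when $W=U_{sd}$, then $\frac{\d}{\d t}H\le\frac{\d}{\d t}W$ there; integrating from $t_1$ and using $H(x(t_1),t_1)\le W(t_1)$ gives $H(x(\bar t),\bar t)\le W(\bar t)$, which (after the usual strictness tightening) contradicts $H(x(\bar t),\bar t)=W(\bar t)$. Hence neither inequality is ever violated.

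The two lower bounds on $|x(t)|^2$ are where the planar topology enters, and supplying the second one is the step I expect to be the main obstacle. The case $W=U_{R_0}$ is immediate: from $U(x(t),t)<U_{R_0}(t)=\min_{|x|^2=R_0}U(\cdot,t)$ and the weak minimum principle, $x(t)$ cannot lie in the closed ball $\{|x|^2\le R_0\}$, and since $|x(t_1)|^2>R_0$ the orbit stays in $\{|x|^2>R_0\}$. For $W=U_{sd}$, the key geometric fact is that $U(\cdot,t)$ restricted to each coordinate axis has minimum exactly $-(\aa^2-\bb)^2/4=U_{sd}(t)$; so while $H(x(t),t)<U_{sd}(t)$ the orbit never meets an axis, $\sgn(x(t))$ is constant, and $x(t)$ is confined to one open quadrant inside the component of the sublevel set $\{U(\cdot,t)<U_{sd}(t)\}$ surrounding a local minimum. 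On that component one must extract the bound $|x(t)|^2\ge\aa^2(t)-\bb$, combining $|x(t)|^2>R_0$ with the explicit shape of $U$ on the quadrant — reducing it to a one-variable function of $|x|^2$ via $x_1x_2\le\tfrac12|x|^2$ and $x_1^4+x_2^4\ge\tfrac12|x|^4$ — and with the standing hypothesis $\aa_\infty\gg\aa_*$, which for $t\ge t_3$ keeps $R_0$ and $\aa^2(t)-\bb$ comparable. Making this quantitative estimate sharp enough is the delicate point. A final bookkeeping remark: since $t\mapsto\min\{U_{R_0}(t),U_{sd}(t)\}$ need not be differentiable at crossover times, I track the two inequalities separately rather than their minimum, which avoids the issue entirely.
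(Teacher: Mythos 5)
Your overall scheme --- discarding the radius condition via superharmonicity and the weak minimum principle, then a first-violation-time comparison of $\frac{\d}{\d t}H$ with $\frac{\d}{\d t}U_{R_0}$ and $\frac{\d}{\d t}U_{sd}$ separately --- is the same as the paper's, which proves this proposition only by appealing to the argument of Theorem \ref{thm:cap}; your handling of the $U_{R_0}$ inequality is complete and matches that argument. The genuine gap is exactly the step you flag and postpone: you never establish $|x(t)|^2\ge\aa^2(t)-\bb$, which is what the comparison $\frac{\d}{\d t}H\le\frac{\d}{\d t}U_{sd}$ requires, and the route you sketch cannot deliver it. Note that membership in $\P_2(t)$ only yields $|x|^2\ge R_0=\frac{\aa_\infty^2-\bb}{2}$, which for $t$ large is about \emph{half} of the needed quantity $\aa^2(t)-\bb$.

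Quantitatively, the symmetrization $x_1x_2\le\frac12|x|^2$, $x_1^4+x_2^4\ge\frac12|x|^4$ turns $U(x,t)\le U_{sd}(t)$ into $\frac{r^2}{8}-\frac{(a+1)r}{2}\le-\frac{a^2}{4}$ with $r=|x|^2$, $a=\aa^2(t)-\bb$, whose smallest admissible $r$ is $2(a+1)-\sqrt{4(a+1)^2-2a^2}=(2-\sqrt2)\,a+O(1)$, strictly below $a$; and this constant is sharp, since on the diagonal $x_2=-x_1$ of the quadrant containing $(\lm_2,-\lm_2)$ one has $U=\frac{s^2}{2}-(a-1)s$ with $s=x_1^2$, so $\{U\le U_{sd}\}$ reaches down to $|x|^2=(2-\sqrt2)a+O(1)$. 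For $t$ large these diagonal points satisfy $|x|^2>R_0$, and taken with zero velocity they satisfy $H=U\le U_{sd}(t)<U_{R_0}(t)$, hence lie in $\P_2(t)$ while $|x|^2<\aa^2(t)-\bb$; at such a configuration $\frac{\d}{\d t}\bigl(H-U_{sd}\bigr)=\aa\dot\aa\bigl(\aa^2-\bb-|x|^2\bigr)>0$, so the pointwise differential inequality you intend to integrate actually fails at points of $\P_2(t)$ itself. Consequently no sharpening of the proposed estimate (nor the hypothesis $\aa_\infty\gg\aa_*$, which only makes $R_0\approx\frac12(\aa^2(t)-\bb)$) can close this step; one would need a different mechanism --- e.g.\ first confining the orbit to a region where $|x|^2\ge\aa^2(t)-\bb$, an integrated rather than pointwise comparison, or a smaller capture set. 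The paper gives no further detail (its proof is the citation of Theorem \ref{thm:cap}, whose second comparison has the same structure), so you have correctly located the crux, but as written your proposal does not prove the proposition.
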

\begin{remark}
	According to \eqref{eqn:r2.min1}-\eqref{eqn:r2.min2}, we can see that $|c_2-c_3|$ is very small. Also, both $c_2(t)<\min\{U_{R_0}(t), U_{sd}(t)\}$ and  $c_3(t)<\min\{U_{R_0}(t), U_{sd}(t)\}$ in \eqref{eqn:r2.min1}-\eqref{eqn:r2.min2} hold. The capture can happen in the neighborhoods of all the local minimum in $\R^2$. This explains why  the SB algorithm can  convergence to either the local minimum points or the global minimum points as the numerical results in \cite{goto2019combinatorial}.
\end{remark}

\setcounter{equation}{0}
\setcounter{figure}{0}
\appendix
\section*{Appendix}
\section{Some computations}
\begin{lemma}\lb{app:lem:ted.comp}
	For any given $\ep > 0$, $f_{\pm,\ep}(x) =\frac{1}{\aa^2}x^3-x \pm \ep = 0$ possesses three solutions $x_{\pm,i}$ with $1 \leq i \leq 3$ satisfying
	\begin{align}
		\lim_{\aa \to \infty} |x_{\pm,1}+\aa| =\lim_{\aa \to \infty} |x_{\pm,3}-\aa| =  \frac{\ep}{2}, \quad
		\lim_{\aa \to \infty} |x_{\pm,2}| = \ep.
	\end{align}
\end{lemma}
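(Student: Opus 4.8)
The plan is to remove the $\aa$‑dependence from the leading behaviour of the cubic by the scaling $x=\aa w$, which reduces the problem to a single fixed perturbed equation to which the implicit function theorem applies, and then to obtain the ``$-$'' case from the ``$+$'' case by a symmetry.

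First I would handle $f_{+,\ep}$. Substituting $x=\aa w$ in $f_{+,\ep}(x)=\tfrac{1}{\aa^{2}}x^{3}-x+\ep=0$ and dividing by $\aa$ turns the equation into $w^{3}-w+\tau=0$ with $\tau:=\ep/\aa$. At $\tau=0$ this is $w^{3}-w=0$, whose roots $w=-1,0,1$ are simple since $3w^{2}-1\neq0$ at each of them. Hence the implicit function theorem produces $\dl_{0}>0$ and three $C^{1}$ branches $w_{-1}(\tau),w_{0}(\tau),w_{1}(\tau)$ on $|\tau|<\dl_{0}$ with $w_{k}(0)=k$ and $w_{k}'(0)=-1/(3k^{2}-1)$, i.e. $w_{\pm1}'(0)=-\tfrac12$ and $w_{0}'(0)=1$. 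For $\aa$ large enough $\tau=\ep/\aa\in(0,\dl_{0})$; since a cubic has at most three real roots these branches exhaust the real roots of $f_{+,\ep}$, and for large $\aa$ they are ordered $\aa w_{-1}(\tau)<\aa w_{0}(\tau)<\aa w_{1}(\tau)$ (because $-\aa-\ep/2<\ep<\aa-\ep/2$), so I set $x_{+,1}=\aa w_{-1}(\ep/\aa)$, $x_{+,2}=\aa w_{0}(\ep/\aa)$, $x_{+,3}=\aa w_{1}(\ep/\aa)$.

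Next I would expand. From $w_{k}(\tau)=k+w_{k}'(0)\tau+O(\tau^{2})$ with $\tau=\ep/\aa$ I obtain $x_{+,1}=-\aa-\tfrac{\ep}{2}+O(1/\aa)$, $x_{+,2}=\ep+O(1/\aa)$, $x_{+,3}=\aa-\tfrac{\ep}{2}+O(1/\aa)$, and letting $\aa\to\infty$ gives $|x_{+,1}+\aa|\to\ep/2$, $|x_{+,2}|\to\ep$, $|x_{+,3}-\aa|\to\ep/2$, which is the claim for $f_{+,\ep}$. For $f_{-,\ep}$ I would not repeat the argument but instead use the identity $f_{-,\ep}(x)=-f_{+,\ep}(-x)$, immediate from the definitions; hence the roots of $f_{-,\ep}$ are $\{-x_{+,1},-x_{+,2},-x_{+,3}\}$, and ordering them yields $x_{-,1}=-x_{+,3}$, $x_{-,2}=-x_{+,2}$, $x_{-,3}=-x_{+,1}$. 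The three limits for the ``resp.'' statement then follow from those already proved, since $|{-x_{+,3}}+\aa|=|x_{+,3}-\aa|$, $|{-x_{+,2}}|=|x_{+,2}|$ and $|{-x_{+,1}}-\aa|=|x_{+,1}+\aa|$.

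The computation is entirely routine, so there is no serious obstacle; the only points needing a little care are verifying that the three implicit‑function branches account for \emph{all} real roots of $f_{\pm,\ep}$ (handled either by the degree of the cubic or, equivalently, by the critical‑point count that also yields the explicit threshold $\aa>\sqrt[4]{27\ep^{2}/4}$ already recorded in the main text) and keeping the index convention $x_{\pm,1}<x_{\pm,2}<x_{\pm,3}$ aligned with the ``near $-\aa$, near $0$, near $+\aa$'' labelling, which holds for all sufficiently large $\aa$.
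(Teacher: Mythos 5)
Your proof is correct, but it follows a different route from the paper. The paper's proof works directly with the Cardano closed-form expressions for the three roots of $f_{\pm,\ep}$ (the formulas \eqref{eqn.f.ep.root1}--\eqref{eqn.f.ep.root3} with $A_\pm=\pm108\ep/\aa^4$, $C=-12/\aa^2$), rationalizes them, and computes the limits of $x_{\pm,2}$, $x_{\pm,1}+\aa$, $x_{\pm,3}-\aa$ term by term; you instead rescale $x=\aa w$ to reduce the equation to the $\aa$-independent perturbation $w^3-w+\tau=0$, $\tau=\ep/\aa$, apply the implicit function theorem at the simple roots $w=-1,0,1$, and read the limits off the first-order expansion $w_k'(0)=-1/(3k^2-1)$, obtaining the ``$-$'' case for free from the symmetry $f_{-,\ep}(x)=-f_{+,\ep}(-x)$. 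Your derivative values and expansions check out ($w_{\pm1}'(0)=-\tfrac12$, $w_0'(0)=1$, giving $-\aa-\ep/2$, $\ep$, $\aa-\ep/2$ up to $o(1)$), the exhaustion of roots by the three branches is correctly justified by the degree of the cubic, and the symmetry bookkeeping for the ``resp.'' statement is right. What your approach buys is a much shorter and less error-prone argument that also explains structurally why the shifts are $\ep/2$ and $\ep$ (they are $w_k'(0)\ep$); what the paper's explicit-formula approach buys is closed-form expressions that it then recycles almost verbatim in the companion Lemma \ref{app:lem:ted.comp2}, where the sharper quantitative bound $O(1/\aa)$ (rather than a mere limit) is needed — if you wanted to cover that lemma by your method you would have to track the remainder, e.g.\ note the branches are smooth so $w_k(\tau)=k+w_k'(0)\tau+O(\tau^2)$, whence $x=\aa k+w_k'(0)\ep+O(\ep^2/\aa)$, which is exactly the Claim 2 estimate. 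One cosmetic caveat: the three real roots exist only once $\aa$ exceeds the discriminant threshold (your parenthetical cites the paper's $\aa>\sqrt[4]{27\ep^2/4}$, which is the paper's stated value), but since the statement concerns $\aa\to\infty$ this does not affect your argument.
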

\begin{proof}
	The roots of $f_{\pm,\ep}(x) = 0$ are given by
	\begin{align}
		x_{\pm,1} =& \frac{(-1+\sqrt{-3})\aa^2}{12}\sqrt[3]{-A_{\pm}+\sqrt{A_{\pm}^2 +C^3}}  + \frac{(-1-\sqrt{-3})\aa^2}{12}\sqrt[3]{-A_{\pm}-\sqrt{A_{\pm}^2 +C^3}}, \lb{eqn.f.ep.root1}\\
		x_{\pm,2} =& \frac{\aa^2}{6} \l(\sqrt[3]{-A_{\pm}+\sqrt{A_{\pm}^2 +C^3}} +  \sqrt[3]{-A_{\pm}-\sqrt{A_{\pm}^2 +C^3}}\r),\lb{eqn.f.ep.root2}\\	
		x_{\pm,3} =& \frac{(-1-\sqrt{-3})\aa^2}{12}\sqrt[3]{-A_{\pm} +\sqrt{A_{\pm}^2 +C^3}}  + \frac{(-1+\sqrt{-3})\aa^2}{12} \sqrt[3]{-A_{\pm} -\sqrt{A_{\pm}^2 +C^3}}, \lb{eqn.f.ep.root3}
	\end{align}
	where $A_{\pm} = \pm 108\ep /\aa^4$ and $C = -12/\aa^2$.
	
	Note that $x_{\pm,2}$ can be calculated as
	\begin{align}
		x_{\pm,2} =& \frac{\aa^2}{6} \l(\sqrt[3]{-A_{\pm} +\sqrt{A_{\pm}^2 +C^3}} +  \sqrt[3]{-A_{\pm} -\sqrt{A_{\pm}^2 +C^3}}\r) \nn\\
		=&  \frac{\aa^2}{6} \frac{-2A_{\pm}}{\sqrt[3]{(-A_{\pm}+\sqrt{A_{\pm}^2 +C^3})^2} + \sqrt[3]{(-A_{\pm}-\sqrt{A_{\pm}^2 +C^3})^2}+C} \nn \\
		=& \frac{\aa^2
		}{6} \frac{-2A_{\pm}}{C+\sqrt[3]{2A_{\pm}^2 + C^3-2A_{\pm}\sqrt{A_{\pm}^2 +C^3}} + \sqrt[3]{2A_{\pm}^2 + C^3+2A_{\pm}\sqrt{A_{\pm}^2 +C^3}}} \nn  \\
		=& \frac{\mp 36\ep }{-12+L_1 + L_2 }, \nn
	\end{align}
	where
	\begin{align}
		L_1 =&\aa^2\sqrt[3]{2A_{\pm}^2 + C^3-2A_{\pm}\sqrt{A_{\pm}^2 +C^3}}   \\
		=& \sqrt[3]{2\cdot 108^2 \ep^2 /\aa^2 -12^3   \mp 2\cdot 108 \ep \sqrt{108^2\ep^2/\aa^4 - 12^3/\aa^2}}, \nn \\
		L_2 =&\aa^2 \sqrt[3]{2A_{\pm}^2 + C^3+2A_{\pm}\sqrt{A_{\pm}^2 +C^3}} \\
		=& \sqrt[3]{2\cdot 108^2 \ep^2/\aa^2 -12^3 \pm 2\cdot 108 \ep \sqrt{108^2\ep^2/\aa^2 - 12^3/\aa^2}}. \nn
	\end{align}
	Note that $\lim_{\aa\to \infty} L_1 = \lim_{\aa\to \infty} L_2 = -12$. Then
	\begin{align}
		\lim_{\aa \to \infty} |x_{\pm,2}| = \ep.
	\end{align}
	For $x_{\pm,1}$, we have that
	\begin{align}\lb{eqn:x+-1}
		x_{\pm,1} =& \frac{(-1+\sqrt{-3})\aa^2}{12}\sqrt[3]{-A_{\pm}+\sqrt{A_{\pm}^2 +C^3}} \\
		&\quad + \frac{(-1-\sqrt{-3})\aa^2}{12}\sqrt[3]{-A_{\pm}-\sqrt{A_{\pm}^2 +C^3}}  \nn \\
		=&\frac{-\aa^2}{12}\l(\sqrt[3]{-A_{\pm}+\sqrt{A_{\pm}^2 +C^3}}+\sqrt[3]{-A_{\pm}-\sqrt{A_{\pm}^2 +C^3}}\r) \nn \\
		&+\frac{\sqrt{-3}\aa^2}{12}\l(\sqrt[3]{-A_{\pm}+\sqrt{A_{\pm}^2 +C^3}}-\sqrt[3]{-A_{\pm}-\sqrt{A_{\pm}^2 +C^3}}\r) \nn \\
		=&	\frac{-x_{\pm,2}}{2} + \frac{\sqrt{-3}\aa^2}{6} \frac{\sqrt{A_{\pm}^2 +C^3}}{-C + \sqrt[3]{(-A_{\pm}+\sqrt{A_{\pm}^2 +C^3})^2}+\sqrt[3]{(-A_{\pm}-\sqrt{A_{\pm}^2 +C^3})^2}} \nn \\
		=& \frac{-x_{\pm,2}}{2} + \frac{\sqrt{3\cdot 12^3\aa^2-3\cdot 108^2\ep^2 }}{72 + 6L_1+6L_2}. \nn
	\end{align}
	Therefore,  we have that
	\begin{align}
		\lim_{\aa \to \infty} |x_{\pm,1}+\aa| =\frac{\ep}{2}. \lb{eqn:lim.x.pm.1}
	\end{align}
	Following a similar argument, $x_{\pm,3}$ can be simplified as
	\begin{align}
		\lb{eqn:x+-3}
		x_{\pm,3}
		=& \frac{-x_{\pm,2}}{2} - \frac{\sqrt{3\cdot 12^3\aa^2-3\cdot 108^2\ep^2 }}{72K + 6L_1+6L_2},
	\end{align}
	and
	\begin{align}
		\lim_{p\to \infty} |x_{\pm,3}-\aa| =\frac{\ep}{2}. \lb{eqn:lim.x.pm.3}
	\end{align}
\end{proof}

\begin{lemma}\lb{app:lem:ted.comp2}
	For any given $\ep > 0$, $\td f_{\pm,\ep}(x) =\frac{1}{\aa^2}x^3-x \pm \frac{\ep}{\aa} = 0$ possesses three solutions $\td x_{\pm,i}$ for $1 \leq i \leq 3$. There exist $B_2$ and $\aa_5$ such that for all $\aa > \aa_5$,  $\td x_{\pm,i}$  satisfy
	\begin{align}
		|\td x_{\pm,1}+\aa| < \frac{B_2}{\aa},\quad |\td x_{\pm,2}| < \frac{B_2}{\aa},\quad |\td x_{\pm,3}-\aa| < \frac{B_2}{\aa}.\lb{app.eqn:1.of.3}
	\end{align}
\end{lemma}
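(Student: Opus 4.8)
The plan is to pass to the rescaled variable $y=x/\aa$; substituting $x=\aa y$ and dividing by $\aa$ turns $\td f_{\pm,\ep}(x)=0$ into $y^{3}-y\pm\ep/\aa^{2}=0$, a depressed cubic $y^{3}+py+q$ with $p=-1$ and $q=\pm\ep/\aa^{2}$. Equivalently, in the notation of the proof of Proposition~\ref{thm:ep.net} and of Lemma~\ref{app:lem:ted.comp} one has $\td f_{\pm,\ep}(x)=f_{\pm,\ep/\aa}(x)$, so the roots of $\td f_{\pm,\ep}$ are obtained from \eqref{eqn.f.ep.root1}--\eqref{eqn.f.ep.root3} by replacing the explicit $\ep$'s (those occurring in $A_{\pm}$) by $\ep/\aa$. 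I will work with the rescaled equation, which makes the geometry transparent: it is an $O(1/\aa^{2})$ perturbation of $y^{3}-y=0$, whose roots are $-1,0,1$.

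First I would settle existence and count. The discriminant of $y^{3}-y+q$ equals $-4(-1)^{3}-27q^{2}=4-27\ep^{2}/\aa^{4}$, which is positive as soon as $\aa>(27\ep^{2}/4)^{1/4}$ --- precisely the threshold $\aa_{4}$ appearing in the proof of Proposition~\ref{thm:ep.net} --- so for such $\aa$ the cubic has three distinct real roots, hence so does $\td f_{\pm,\ep}$; denote these by $\td x_{\pm,1},\td x_{\pm,2},\td x_{\pm,3}$. Because the product of the rescaled roots is $\mp\ep/\aa^{2}$ and their sum is $0$, exactly one lies near $-1$, one near $0$, one near $1$, and I label them so that $\td x_{\pm,1}/\aa$ is the root near $-1$, $\td x_{\pm,2}/\aa$ the one near $0$, and $\td x_{\pm,3}/\aa$ the one near $1$.

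Next comes the localization, which is the substance of the proof. Write $\sigma=\ep/\aa^{2}>0$, so the rescaled equation reads $y^{3}-y=\mp\sigma$. Near $y=1$ set $y=1+u$, giving $2u+3u^{2}+u^{3}=\mp\sigma$; on $|u|\le\tfrac14$ one checks $2+6u+3u^{2}\ge\tfrac12$ and the values at $u=\pm\tfrac14$ are $-\tfrac{21}{64}$ and $+\tfrac{45}{64}$, so once $\sigma\le\tfrac{21}{64}$ there is a unique solution $u^{*}\in(-\tfrac14,\tfrac14)$, and the mean value theorem gives $|u^{*}|\le 2\sigma=2\ep/\aa^{2}$, i.e. $|\td x_{\pm,3}-\aa|=\aa|u^{*}|\le 2\ep/\aa$. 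The identical computation at $y=-1$ (now $2u-3u^{2}+u^{3}=\mp\sigma$, derivative still $\ge\tfrac12$ on $|u|\le\tfrac14$) gives $|\td x_{\pm,1}+\aa|\le 2\ep/\aa$, and at $y=0$ ($u^{3}-u=\mp\sigma$, with $|3u^{2}-1|\ge\tfrac{13}{16}$ on $|u|\le\tfrac14$) gives $|\td x_{\pm,2}|\le\tfrac{16}{13}\ep/\aa$. Taking $B_{2}=3\ep$ and $\aa_{5}=\max\{(27\ep^{2}/4)^{1/4},\,3\sqrt{\ep}\}$, all three inequalities in \eqref{app.eqn:1.of.3} hold for every $\aa>\aa_{5}$.

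I expect the whole argument to be routine --- it is exactly the ``tedious'' estimate the statement alludes to --- and the only step needing genuine care is the bookkeeping of the constants in the last paragraph: choosing the window $|u|\le\tfrac14$, verifying the uniform lower bounds on the relevant derivatives there, and checking the sign changes at the endpoints, so that the three local roots are actually captured and none is counted twice. If one prefers to avoid the discriminant, an alternative is to rerun the computation in the proof of Lemma~\ref{app:lem:ted.comp} verbatim with $\ep$ replaced by $\ep/\aa$: one then gets $\td x_{\pm,2}=(\mp 36\ep/\aa)/(-12+L_{1}+L_{2})$ with $L_{1},L_{2}\to -12$, so $\aa\,\td x_{\pm,2}\to\pm\ep$ stays bounded, while \eqref{eqn:x+-1} and \eqref{eqn:x+-3} show in the same way that $\aa(\td x_{\pm,1}+\aa)$ and $\aa(\td x_{\pm,3}-\aa)$ remain bounded, again yielding \eqref{app.eqn:1.of.3}.
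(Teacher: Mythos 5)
Your proof is correct, but it takes a genuinely different route from the paper's. The paper proves this lemma exactly as it proves Lemma \ref{app:lem:ted.comp}: it writes the three roots via the Cardano-type formulas \eqref{eqn.f.ep.root1}--\eqref{eqn.f.ep.root3} with $A_{\pm}$ replaced by $\td A_{\pm}=\pm 108\ep/\aa^{5}$, simplifies $\td x_{\pm,2}=\mp 36\ep/(-12\aa+\td L_{1}+\td L_{2})$ with $\td L_{1},\td L_{2}$ explicit nested radicals satisfying $\td L_{i}/\aa\to -12$, and concludes that $\aa\,\td x_{\pm,2}$, $\aa(\td x_{\pm,1}+\aa)$ and $\aa(\td x_{\pm,3}-\aa)$ stay bounded --- i.e.\ precisely the ``alternative'' you sketch in your closing sentences. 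Your main argument instead rescales $y=x/\aa$, treats $y^{3}-y\pm\ep/\aa^{2}=0$ as a small perturbation of $y^{3}-y=0$, and localizes one root in each of the three disjoint windows of radius $\tfrac14$ around $-1,0,1$ by checking the sign change at the window endpoints and a uniform lower bound ($\tfrac12$, resp.\ $\tfrac{13}{16}$) on the derivative there; the mean value theorem then gives $|u^{*}|\le 2\ep/\aa^{2}$ (resp.\ $\tfrac{16}{13}\ep/\aa^{2}$), hence \eqref{app.eqn:1.of.3} with the explicit constants $B_{2}=3\ep$ and $\aa_{5}=\max\{(27\ep^{2}/4)^{1/4},3\sqrt{\ep}\}$, and since a cubic has only three roots the three localized roots exhaust them, so the labeling is unambiguous. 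What your route buys is brevity, explicit checkable constants, and freedom from the delicacies of the radical formulas: for large $\aa$ one has $\td A_{\pm}^{2}+C^{3}<0$ (casus irreducibilis), so the paper's cube roots are really complex-branch expressions, and the bookkeeping there is error-prone (indeed the same algebra that gives $\lim|x_{\pm,2}|=\ep$ in Lemma \ref{app:lem:ted.comp} gives $\lim|\aa\,\td x_{\pm,2}|=\ep$ rather than the stated $\tfrac32\ep$ --- immaterial for the conclusion). What the paper's route buys is uniformity with Lemma \ref{app:lem:ted.comp}, whose formulas it literally reuses, and exact asymptotics for each root rather than just upper bounds. One small remark on your write-up: the Vieta observation (sum zero, product $\mp\ep/\aa^{2}$) is not what pins each root to its window --- your endpoint sign-change argument already does that --- so it can be dropped.
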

\begin{proof}
	The roots of $\td f_{\pm,\ep}(x) = 0$ are given by $\td x_{\pm,1}$, $\td x_{\pm,1}$ and $\td x_{\pm,3}$ as \eqref{eqn.f.ep.root1}, \eqref{eqn.f.ep.root2}, \eqref{eqn.f.ep.root3} by replacing  $A_{\pm}$ with $\td A_{\pm} = \pm 108\ep /\aa^5$. Then $x_{\pm,2}$ can be calculated as
	\begin{align}
		\td x_{\pm,2} =& \frac{\mp 36\ep }{-12 \aa+ \td L_1 + \td L_2 }, \nn
	\end{align}
	where
	\begin{align}
		\td L_1 =&\aa^3
		\sqrt[3]{2\td A_{\pm}^2 + C^3-2\td A_{\pm}\sqrt{\td A_{\pm}^2 +C^3}} \nn\\
		=& \sqrt[3]{2\cdot 108^2 \ep^2 /\aa -12^3 \aa^3   \mp 2\cdot 108 \ep \sqrt{108^2\ep^2/\aa^2 - 12^3\aa^4}}, \nn \\
		\td L_2 =&\aa^3 \sqrt[3]{2 \td A_{\pm}^2 + C^3+2 \td A_{\pm}\sqrt{\td A_{\pm}^2 +C^3}} \nn \\
		=& \sqrt[3]{2\cdot 108^2 \ep^2/\aa -12^3 \aa^3 \pm 2\cdot 108 \ep \sqrt{108^2\ep^2/\aa^2 - 12^3 \aa^4}}. \nn
	\end{align}
	Note that $\lim_{\aa\to \infty} \td L_1/\aa = \lim_{\aa\to \infty} \td L_2/\aa = -12$. Then
	\begin{align}
		\lim_{\aa \to \infty} | \aa \td x_{\pm,2} | = \frac{3}{2}\ep. \nn
	\end{align}
	It yields that for any $\ep$, there exists a $\bar B_1$ such that for $\aa > \bar \aa_1$
	\begin{align}
		|\td x_{\pm,2}| < \bar B_1/\aa . \lb{eqn:lim.x.pm.2}
	\end{align}
	Following a similar argument of \eqref{eqn:x+-1} and \eqref{eqn:x+-3}, there exists $\bar B_2$ such that for $\aa >\bar \aa_2$,
	\begin{align}
		|\td x_{\pm,1}+\aa| < \frac{\bar B_2}{\aa},\quad |\td x_{\pm,3}-\aa| < \frac{\bar B_2}{\aa}.\lb{eqn:1.of.3.2}
	\end{align}
	Then uniform $B_2$ and $\aa_5$ can be founded in \eqref{eqn:lim.x.pm.2} and \eqref{eqn:1.of.3.2} and this lemma holds.
\end{proof}

\section*{Acknowledgments}\lb{sec:appen}
The authors are grateful to Prof. Yiming Long, Prof. Zhenli Xu and Prof. Jie Sun for discussions and encouragement on this topic.
B. Liu also thanks Dr. Liwei Yu and Dr. Hao Zhang for their  discussions on related topics.

\noeqref{eqn:r2.lm2, eqn:r2.lm3, eqn:r2.c0, eqn:r2.sad}

\def\cprime{$'$}


\begin{thebibliography}{plain}
	
	\bibitem{Aizenman2014}
	Michael Aizenman, Hugo Duminil-Copin, and Vladas Sidoravicius.
	\newblock Random currents and continuity of ising model's spontaneous
	magnetization.
	\newblock {\em Communications in Mathematical Physics}, 334(2):719--742, jul
	2014.
	
	\bibitem{Aizenman2019}
	Michael Aizenman, Hugo Duminil-Copin, Vincent Tassion, and Simone Warzel.
	\newblock Emergent planarity in two-dimensional ising models with finite-range
	interactions.
	\newblock {\em Inventiones mathematicae}, 216(3):661--743, jan 2019.
	
	\bibitem{audin2014morse}
	Michele Audin and Mihai Damian.
	\newblock {\em Morse theory and Floer homology}.
	\newblock Springer, 2014.
	
	\bibitem{barahona1982computational}
	Francisco Barahona.
	\newblock On the computational complexity of ising spin glass models.
	\newblock {\em Journal of Physics A: Mathematical and General}, 15(10):3241,
	1982.
	
	\bibitem{barahona1988application}
	Francisco Barahona, Martin Gr{\"o}tschel, Michael J{\"u}nger, and Gerhard
	Reinelt.
	\newblock An application of combinatorial optimization to statistical physics
	and circuit layout design.
	\newblock {\em Operations Research}, 36(3):493--513, 1988.
	
	\bibitem{MR2029316}
	Edward Belbruno.
	\newblock {\em Capture dynamics and chaotic motions in celestial mechanics}.
	\newblock Princeton University Press, Princeton, NJ, 2004.
	\newblock With applications to the construction of low energy transfers, With a
	foreword by Jerry Marsden.
	
	\bibitem{bohm2019poor}
	Fabian B{\"o}hm, Guy Verschaffelt, and Guy Van~der Sande.
	\newblock A poor man’s coherent ising machine based on opto-electronic
	feedback systems for solving optimization problems.
	\newblock {\em Nature communications}, 10(1):1--9, 2019.
	
	\bibitem{MR233535}
	C.~C. Conley.
	\newblock Low energy transit orbits in the restricted three-body problem.
	\newblock {\em SIAM J. Appl. Math.}, 16:732--746, 1968.
	
	\bibitem{conley1969ultimate}
	Charles~C Conley.
	\newblock On the ultimate behavior of orbits with respect to an unstable
	critical point i. oscillating, asymptotic, and capture orbits.
	\newblock {\em Journal of Differential Equations}, 5(1):136--158, 1969.
	
	\bibitem{DavidGilbarg2001}
	Neil S.~Trudinger David~Gilbarg.
	\newblock {\em Elliptic Partial Differential Equations of Second Order}.
	\newblock Springer Berlin Heidelberg, 2001.
	
	\bibitem{Dobrushin1973}
	R.~L. Dobrushin.
	\newblock Gibbs state describing coexistence of phases for a three-dimensional
	ising model.
	\newblock {\em Theory of Probability {\&} Its Applications}, 17(4):582--600,
	sep 1973.
	
	\bibitem{DuminilCopin2019}
	Duminil-Copin, Raoufi, and Tassion.
	\newblock Sharp phase transition for the random-cluster and potts models via
	decision trees.
	\newblock {\em Annals of Mathematics}, 189(1):75, 2019.
	
	\bibitem{DuminilCopin2018}
	Hugo Duminil-Copin and Vincent Tassion.
	\newblock Correction to: A new proof of the sharpness of the phase transition
	for bernoulli percolation and the ising model.
	\newblock {\em Communications in Mathematical Physics}, 359(2):821--822, mar
	2018.
	
	\bibitem{MR0438971}
	Freeman~J. Dyson.
	\newblock Existence and nature of phase transitions in one-dimensional {I}sing
	ferromagnets.
	\newblock In {\em Mathematical aspects of statistical mechanics ({P}roc.
		{S}ympos. {A}ppl. {M}ath., {N}ew {Y}ork, 1971)}, pages 1--12. SIAM--AMS
	Proceedings, Vol. V, 1972.
	
	\bibitem{Frauenfelder2018Book}
	Urs Frauenfelder and Otto van Koert.
	\newblock {\em The restricted three-body problem and holomorphic curves}.
	\newblock Pathways in Mathematics. Birkh\"{a}user/Springer, Cham, 2018.
	
	\bibitem{Galashin2020}
	Pavel Galashin and Pavlo Pylyavskyy.
	\newblock Ising model and the positive orthogonal grassmannian.
	\newblock {\em Duke Mathematical Journal}, 169(10):1877--1942, jul 2020.
	
	\bibitem{goto2019quantum}
	Hayato Goto.
	\newblock Quantum computation based on quantum adiabatic bifurcations of
	kerr-nonlinear parametric oscillators.
	\newblock {\em Journal of the Physical Society of Japan}, 88(6):061015, 2019.
	
	\bibitem{goto2019combinatorial}
	Hayato Goto, Kosuke Tatsumura, and Alexander~R Dixon.
	\newblock Combinatorial optimization by simulating adiabatic bifurcations in
	nonlinear hamiltonian systems.
	\newblock {\em Science advances}, 5(4):eaav2372, 2019.
	
	\bibitem{MR3627847}
	Heng Guo and Mark Jerrum.
	\newblock Random cluster dynamics for the {I}sing model is rapidly mixing.
	\newblock In {\em Proceedings of the {T}wenty-{E}ighth {A}nnual {ACM}-{SIAM}
		{S}ymposium on {D}iscrete {A}lgorithms}, pages 1818--1827. SIAM,
	Philadelphia, PA, 2017.
	
	\bibitem{haribara2016computational}
	Yoshitaka Haribara, Shoko Utsunomiya, and Yoshihisa Yamamoto.
	\newblock Computational principle and performance evaluation of coherent ising
	machine based on degenerate optical parametric oscillator network.
	\newblock {\em Entropy}, 18(4):151, 2016.
	
	\bibitem{MR812787}
	Helmut Hofer.
	\newblock A geometric description of the neighbourhood of a critical point
	given by the mountain-pass theorem.
	\newblock {\em J. London Math. Soc. (2)}, 31(3):566--570, 1985.
	
	\bibitem{horn2012matrix}
	Roger~A Horn and Charles~R Johnson.
	\newblock {\em Matrix analysis}.
	\newblock Cambridge university press, 2012.
	
	\bibitem{inagaki2016coherent}
	Takahiro Inagaki, Yoshitaka Haribara, Koji Igarashi, Tomohiro Sonobe, Shuhei
	Tamate, Toshimori Honjo, Alireza Marandi, Peter~L McMahon, Takeshi Umeki,
	Koji Enbutsu, et~al.
	\newblock A coherent ising machine for 2000-node optimization problems.
	\newblock {\em Science}, 354(6312):603--606, 2016.
	
	\bibitem{ising1925beitrag}
	Ernst Ising.
	\newblock Beitrag zur theorie des ferromagnetismus.
	\newblock {\em Zeitschrift f{\"u}r Physik}, 31(1):253--258, 1925.
	
	\bibitem{Jerrum1993}
	Mark Jerrum and Alistair Sinclair.
	\newblock Polynomial-time approximation algorithms for the ising model.
	\newblock {\em {SIAM} Journal on Computing}, 22(5):1087--1116, oct 1993.
	
	\bibitem{Johnson2011}
	M.~W. Johnson, M.~H.~S. Amin, S.~Gildert, T.~Lanting, F.~Hamze, N.~Dickson,
	R.~Harris, A.~J. Berkley, J.~Johansson, P.~Bunyk, E.~M. Chapple, C.~Enderud,
	J.~P. Hilton, K.~Karimi, E.~Ladizinsky, N.~Ladizinsky, T.~Oh, I.~Perminov,
	C.~Rich, M.~C. Thom, E.~Tolkacheva, C.~J.~S. Truncik, S.~Uchaikin, J.~Wang,
	B.~Wilson, and G.~Rose.
	\newblock Quantum annealing with manufactured spins.
	\newblock {\em Nature}, 473(7346):194--198, may 2011.
	
	\bibitem{Kim2010}
	K.~Kim, M.-S. Chang, S.~Korenblit, R.~Islam, E.~E. Edwards, J.~K. Freericks,
	G.-D. Lin, L.-M. Duan, and C.~Monroe.
	\newblock Quantum simulation of frustrated ising spins with trapped ions.
	\newblock {\em Nature}, 465(7298):590--593, jun 2010.
	
	\bibitem{Kirkpatrick1983}
	S.~Kirkpatrick, C.~D. Gelatt, and M.~P. Vecchi.
	\newblock Optimization by simulated annealing.
	\newblock {\em Science}, 220(4598):671--680, may 1983.
	
	\bibitem{Mahboob2016}
	Imran Mahboob, Hajime Okamoto, and Hiroshi Yamaguchi.
	\newblock An electromechanical ising hamiltonian.
	\newblock {\em Science Advances}, 2(6):e1600236, jun 2016.
	
	\bibitem{marandi2014network}
	Alireza Marandi, Zhe Wang, Kenta Takata, Robert~L Byer, and Yoshihisa Yamamoto.
	\newblock Network of time-multiplexed optical parametric oscillators as a
	coherent ising machine.
	\newblock {\em Nature Photonics}, 8(12):937--942, 2014.
	
	\bibitem{McMahon2016}
	Peter~L. McMahon, Alireza Marandi, Yoshitaka Haribara, Ryan Hamerly, Carsten
	Langrock, Shuhei Tamate, Takahiro Inagaki, Hiroki Takesue, Shoko Utsunomiya,
	Kazuyuki Aihara, Robert~L. Byer, M.~M. Fejer, Hideo Mabuchi, and Yoshihisa
	Yamamoto.
	\newblock A fully programmable 100-spin coherent ising machine with all-to-all
	connections.
	\newblock {\em Science}, 354(6312):614--617, oct 2016.
	
	\bibitem{milnor2016morse}
	John Milnor.
	\newblock {\em Morse theory.(AM-51)}, volume~51.
	\newblock Princeton university press, 2016.
	
	\bibitem{Nishimori2001}
	Hidetoshi Nishimori.
	\newblock {\em Statistical physics of spin glasses and information processing :
		an introduction}.
	\newblock Oxford University Press, Oxford New York, 2001.
	
	\bibitem{Ott2019}
	S{\'{e}}bastien Ott.
	\newblock Weak mixing and analyticity of the pressure in the ising model.
	\newblock {\em Communications in Mathematical Physics}, 377(1):675--696, oct
	2019.
	
	\bibitem{MR845785}
	Paul~H. Rabinowitz.
	\newblock {\em Minimax methods in critical point theory with applications to
		differential equations}, volume~65 of {\em CBMS Regional Conference Series in
		Mathematics}.
	\newblock Published for the Conference Board of the Mathematical Sciences,
	Washington, DC; by the American Mathematical Society, Providence, RI, 1986.
	
	\bibitem{Sahai2020Dynamical}
	Tuhin Sahai.
	\newblock Dynamical systems theory and algorithms for np-hard problems.
	\newblock In Oliver Junge, Oliver Sch{\"u}tze, Gary Froyland, Sina
	Ober-Bl{\"o}baum, and Kathrin Padberg-Gehle, editors, {\em Advances in
		Dynamics, Optimization and Computation}, pages 183--206, Cham, 2020. Springer
	International Publishing.
	
	\bibitem{Santoro2002}
	G.~E. Santoro.
	\newblock Theory of quantum annealing of an ising spin glass.
	\newblock {\em Science}, 295(5564):2427--2430, mar 2002.
	
	\bibitem{Stein1992}
	Daniel~L Stein.
	\newblock {\em Spin Glasses and Biology}.
	\newblock {WORLD} {SCIENTIFIC}, aug 1992.
	
	\bibitem{MR3630933}
	Amanda~Pascoe Streib and Noah Streib.
	\newblock Cycle basis {M}arkov chains for the {I}sing model.
	\newblock In {\em 2017 {P}roceedings of the {F}ourteenth {W}orkshop on
		{A}nalytic {A}lgorithmics and {C}ombinatorics ({ANALCO})}, pages 56--65.
	SIAM, Philadelphia, PA, 2017.
	
	\bibitem{takata201616}
	Kenta Takata, Alireza Marandi, Ryan Hamerly, Yoshitaka Haribara, Daiki Maruo,
	Shuhei Tamate, Hiromasa Sakaguchi, Shoko Utsunomiya, and Yoshihisa Yamamoto.
	\newblock A 16-bit coherent ising machine for one-dimensional ring and cubic
	graph problems.
	\newblock {\em Scientific reports}, 6:34089, 2016.
	
	\bibitem{MR794263}
	Gang Tian.
	\newblock On the mountain-pass lemma.
	\newblock {\em Kexue Tongbao (English Ed.)}, 29(9):1150--1154, 1984.
	
	\bibitem{utsunomiya2011mapping}
	Shoko Utsunomiya, Kenta Takata, and Yoshihisa Yamamoto.
	\newblock Mapping of ising models onto injection-locked laser systems.
	\newblock {\em Optics express}, 19(19):18091--18108, 2011.
	
	\bibitem{wang2013coherent}
	Zhe Wang, Alireza Marandi, Kai Wen, Robert~L Byer, and Yoshihisa Yamamoto.
	\newblock Coherent ising machine based on degenerate optical parametric
	oscillators.
	\newblock {\em Physical Review A}, 88(6):063853, 2013.
	
	\bibitem{yamamoto2017coherent}
	Yoshihisa Yamamoto, Kazuyuki Aihara, Timothee Leleu, Ken-ichi Kawarabayashi,
	Satoshi Kako, Martin Fejer, Kyo Inoue, and Hiroki Takesue.
	\newblock Coherent ising machines—optical neural networks operating at the
	quantum limit.
	\newblock {\em npj Quantum Information}, 3(1):1--15, 2017.
	
	\bibitem{Yoshimura2019}
	Natsuhito Yoshimura, Masashi Tawada, Shu Tanaka, Junya Arai, Satoshi Yagi,
	Hiroyuki Uchiyama, and Nozomu Togawa.
	\newblock Efficient ising model mapping for induced subgraph isomorphism
	problems using ising machines.
	\newblock In {\em 2019 {IEEE} 9th International Conference on Consumer
		Electronics ({ICCE}-Berlin)}. {IEEE}, sep 2019.
	
\end{thebibliography}
\end{document}